\newtheorem*{theorem1}{Theorem}
\newcommand{\R}{\mathbb{R}}
\newcommand{\N}{\mathbb{N}}
\newcommand{\E}{\mathbb{E}}
\renewcommand{\P}{\mathbb{P}}
\newcommand{\1}{\textbf 1}
\newcommand{\dd}{\,\mathrm{d}} 
\newcommand{\ddd}{\mathrm{d}} 
\newcommand{\ee}{\mathrm{e}} 
\newcommand{\abs}[1]{\lvert#1\rvert}
\newcommand{\eps}{\varepsilon}
\theoremstyle{plain}
\newtheorem{theorem}{Theorem}[section]
\newtheorem{definition}[theorem]{Definition}
\newtheorem*{theorem*}{Theorem}
\newtheorem{proposition}[theorem]{Proposition}
\newtheorem{lemma}[theorem]{Lemma}
\newtheorem*{lemma*}{Lemma}
\newtheorem{corollary}[theorem]{Corollary}
\newtheorem{example}[theorem]{Example}
\def\namedlabel#1#2{\begingroup
    #2%
    \def\@currentlabel{#2}%
    \phantomsection\label{#1}\endgroup
}
\titleformat{\chapter}[hang]{\normalfont\bfseries}{\huge \thechapter}{12pt}{\huge}
\titleformat{\section}[hang]{\normalfont\bfseries}{\Large \thesection}{8pt}{\Large}
\titleformat{\subsection}[hang]{\normalfont\bfseries}{\large \thesubsection}{8pt}{\large}
\titleformat{\subsubsection}[hang]{\normalfont\bfseries}{\hspace{-0.75em}}{8pt}{\large}
\author[$\dagger$]{Sam Baguley}
\author[$\dagger$]{Leif D\"oring}
\author[$\star$]{Andreas Kyprianou}
\affil[$\dagger$]{University of Mannheim, Institute of Mathematics, 68131 Mannheim, Germany.}
\affil[$\star$]{University of Bath, Department of Mathematical Sciences, Bath, BA2 7AY.}
\title{General path integrals and stable SDEs}
\begin{document}
\maketitle

\begin{abstract}
The theory of one-dimensional stochastic differential equations driven by Brownian motion is classical and has been largely understood for several decades. For stochastic differential equations with jumps the picture is still incomplete, and even some of the most basic questions are only partially understood. In the present article we study existence and uniqueness of weak solutions to
\begin{align*}
	\dd Z_t=\sigma(Z_{t-})\dd X_t
\end{align*}	
driven by a (symmetric) $\alpha$-stable L\'evy process, in the spirit of the classical Engelbert-Schmidt time-change approach. Extending and completing results of Zanzotto we derive a complete characterisation for existence und uniqueness of weak solutions for $\alpha\in(0,1)$. Our approach is not based on classical stochastic calculus arguments but on the general theory of Markov processes. We proof integral tests for finiteness of path integrals under minimal assumptions.
\smallskip

\noindent {\it Key words:} Stochastic Differential Equations, stable processes, Markov processes, perpetuities, time change.
\smallskip

\noindent {\it 2020 MSC:} 60H10, 60J25, 60G52.
\end{abstract}

\section{Introduction}

Itô diffusions are solutions of stochastic differential equations driven by a Brownian motion, and their study as stochastic processes in their own right has a rich history dating back to the foundational works of Feller \cite{Feller1}, \cite{Feller2} in the 1950s. Itô diffusions continue to motivate theoretical research, due in some part to their many applications, and this sustained interest has also given rise to various generalisations. The present article is concerned with solutions of stochastic differential equations driven not by a Brownian motion but a Lévy process. 
Our objective is to study solutions to the one-dimensional driftless stochastic differential equation
\begin{align}\label{eq1}
	\ddd Z_t = \sigma(Z_{t-}) \dd X_t, \quad Z_0 = z\in\R,
\end{align}
where $X$ is a Lévy process and $\sigma$ is a non-negative, measurable function on $\R$. Solution processes $Z$ are distinguished in contrast to continuous Itô diffusions by the presence of discontinuities and are therefore known as jump diffusions. In the present article we always assume the driving L\'evy process $X$ is strictly stable, that is, the scaling property
\[
(cX_{c^{-\alpha}t}, t\geq 0) \text{ under } \mathbb{P}_x \text{ is equal in law to } (X_t, t\geq 0) \text{ under }\mathbb{P}_{cx},
\]
holds for some $\alpha\in (0,2]$. The strictly stable process of index $\alpha = 2$ is simply the Brownian motion. 

In 1981 Engelbert and Schmidt \cite{ES} proved a zero-one law for finiteness of so-called finite-time path integrals $\int_0^t f(B_s)\dd s$ of the Brownian motion. In the same paper they made use of their zero-one law and a time-change representation of Brownian SDEs to comprehensively address the question of weak existence and uniqueness of solutions to \eqref{eq1}, see Theorem 4 of \cite{ES} or Theorems 3.4.2 and 3.4.6 of Karatzas and Shreve \cite{KS}. Almost two decades later Zanzotto \cite{Zanzotto97} extended Engelbert and Schmidt's results to stable processes of index $\alpha\in(1,2]$. Proving the time-change representation for stable SDEs is significantly harder than for Brownian SDEs, because it is no longer possible to argue using quadratic variation. Despite this difficulty, Zanzotto's result is nearly identical in form to that for the Brownian motion, see Theorem 2.32 of his paper. A few years later Zanzotto generalised his result further, see Theorems 2.2 and 2.6 of \cite{Zanzotto02}, but was still restricted to $\alpha\in(1,2]$ for precise results. Harder results for SDEs concern strong existence and pathwise uniqueness. Following seminal work of Yamada and Watanabe \cite{YW} for the Brownian case, strong existence and pathwise uniqueness has been studied in many variations. The classical setup gives positive results for Lipschitz continuous drift and $\frac 1 2$-H\"older continuous noise coefficient. Extensions to stable SDEs are surprisingly recent, mostly motivated by the study of continuous state branching processes. For spectrally positive stable processes (only positive jumps) and $\alpha\in (1,2)$ the Yamada/Watanabe argument was generalised by Li and Mytnik \cite{LM} leading to Lipschitz continuous drift and $(1-\frac 1 \alpha$)-H\"older continuous noise coefficient. The symmetric case was dealt with by Bass \cite{Bass} leading to $\frac 1 \alpha$-H\"older continuous noise coefficient. Since $\frac 1 \alpha>1$ for $\alpha\in (0,1)$ it comes as no surprise that results must be structurally different for small $\alpha$. Partial results for $\alpha\in (0,1)$ on the pathwise uniqueness problem appeared in the past two decades (see for instance Bass/Burdzy/Chen \cite{BBC} for counter example). To the best of our knowledge proofs for sharp conditions are still unknown. The aim of the present article is not to solve the strong existence/uniqueness problem, but to give a complete solution to the weak existence/uniqueness problem in the spirit of Engelbert and Schmidt's results.

The works of Engelbert/Schmidt and Zanzotto together answer questions for weak solutions to \eqref{eq1} in the case of $\alpha\in(1,2]$. In all of them the time-change method is used to reduce the SDE to a time-change of the driving process $X$ using the time-change $\int_0^t \sigma(X_s)^{-\alpha}\dd s$, which is usually called a path integral (or perpetual integral). The crucial tool for analysing those path integrals is the occupation time formula and local time for $X$. This is the barrier to results for $\alpha\in (0,1]$: while the time-change representations of Zanzotto continue to hold, there is no local time to work with. The challenge we tackle in the present article is the study of finite- and infinite-time path integrals of the form
$$
	I^f_t = \int_0^t f(X_s)\dd s, \qquad t\in(0,\infty],
$$
where $f$ is a measurable function taking values in $[0,+\infty]$. The value $+\infty$ for $f$ is crucial as it corresponds to the zeros of $\sigma$. What we seek is a collection of `integral tests', which are statements tying the law of a path integral to finiteness of a related deterministic integral. Path integrals have attracted some interested in recent years, mostly for restricted classes of integrands (such as $f$ being locally integrable or continuous) and processes (such as L\'evy processes or diffusion processes) leading to very clean results. For $X$ a Brownian motion with positive drift, Salminen and Yor \cite{SY1, SY2} obtained results via the Ray-Knight theorem,
and similar results for spectrally negative Lévy processes can be found in Koshnevisan, Salminen, and Yor \cite{KSY}. The recent article of Kolb and Savov \cite{KolbSavov} has the most up to date results for L\'evy processes. The technique we use only requires minimal assumptions; i.e. $f$ measurable and $X$ a  standard Markov process. Allowing $f$ to be infinite changes the picture completely as known $0$-$1$-laws for path integrals (see for instance Kyprianou and D\"oring \cite{DK1}) fail immediately for transient $X$ (take $f$ an infinite indicator on a set which $X$ visits with probability in $(0,1)$). Hence, it is clear that for a general theorem sets which hit with probability less than one must appear in the statements of results. Such sets are called avoidable; we call their complements supportive, and these sets will play a fundamental role in this work. Assuming only that $X$ is a standard Markov process on a general state space $E$ with potential measure $U$ we will prove the following theorem on path integrals.
\begin{theorem1}
Let $f:E \to [0,+\infty]$ be measurable and $X$ a standard Markov process on $E$ with (possibly infinite) life time $\zeta$. Let $z\in E$. 
Then the following are equivalent.
\begin{enumerate}
	\item $\P_z\big( \int_0^\zeta f(X_s) \dd s < \infty\big) > 0;$
	\item The integral test $\int_{E\backslash B} f(x) \, U(z, \ddd x) < \infty
	$
	holds for a $\P_z$-avoidable set $B$.
\end{enumerate}
\end{theorem1}
We remind the reader that the integral test in (ii) says nothing but $\E_z[\int_0^\zeta \1_{B^C}(X_s)f(X_s)\ddd s]<\infty$. Thus, the path integral is finite with positive probability if and only if it has finite mean away from an avoidable set. In several situations the potential measure $U$ is explicit and avoidable sets can be described analytically to turn the integral tests into analytic statements.\smallskip

The general form of the path integral theorem allows us to deduce a couple of consequences on finite time path integrals which we will need to study the SDE \eqref{eq1} via time-change techniques. For finite time path integrals the role of avoidable sets is replaced by sets which are avoided for a positive amount of time, so-called thin sets. Let us summarise the main findings for the SDE \eqref{eq1} driven by a symmetric stable L\'evy process with $\alpha\in (0,1)$, see Section \ref{sec:SDE} for the theorems.

\begin{theorem1}
If $N(\sigma)$ denotes the zero-set of $\sigma$ and 
\begin{align}\label{O def1}
	 \mathcal{O}(\sigma,\alpha) = \Big\{x\in\R : \int_{\R\setminus B} \sigma(y)^{-\alpha} \abs{x-y}^{\alpha-1} \dd y = \infty\text{ for all }\P_x\text{-thin sets } B \Big\}
\end{align}
denotes the set of irregular points, then the following statements hold.
\begin{enumerate}
	\item For fixed $z\in\R$ there exists a non-trivial (i.e. non-constant) local weak solution if and only if $z\notin \mathcal O(\sigma,\alpha)$. 
	\item A global weak solution exists for all initial conditions $z\in\R$ if and only if $\mathcal O(\sigma,\alpha)\subseteq N(\sigma)$.
	\item A non-trivial global weak solution exist for all $z\in\R$ if and only if $\mathcal O(\sigma, \alpha)=\emptyset$.
	\item There exists a global weak solution for all $z\in\R$, each of which is unique in law, if and only if $\mathcal{O}(\sigma,\alpha) = N(\sigma)$.
\end{enumerate}
\end{theorem1}
Since thin sets of stable processes have analytic descriptions through capacities, the theorem gives a full analytic descriptions of weak solutions for the stable SDE \eqref{eq1}. The analytic description allows for the $\P_x$-thin sets to be removed in many examples such as $\sigma$ with a monotone zero at $x$.\smallbreak 

Statements (i)-(iv) are identical to the Engelbert-Schmidt/Zanzotto theorems for Brownian SDEs and stable SDEs for $\alpha\in (1,2)$, but differ in the set $\mathcal O(\sigma,\alpha)$ of irregular points, which in those cases is given by
\begin{align*}
	 \mathcal{O}(\sigma,\alpha) = \Big\{x\in\R : \int_{x} \sigma(y)^{-\alpha} \dd y = \infty \Big\}.
\end{align*}
This set is different from \eqref{O def1} in two respects. First, since complements of $\P_x$-thin sets contain balls around $x$ for $\alpha\in (1,2)$ those sets do not appear.
Second, the polynomial factor in the integral tests is not present; for $\alpha \in(0,1)$ this factor has a pole, which puts a tighter restriction on the function $\sigma$.
Let us quickly give two examples that highlight the two differences.


\textbf{Integral test}: To get an idea for the additional factor $|y|^{\alpha-1}$ it is instructive to check the simple example 
\begin{align}\label{power}
	dZ_t=|Z_t|^\beta dX_t,\quad Z_0=0.
\end{align}
 Since the trivial solution $Z\equiv 0$ always exists, using (i) the Engelbert-Schmidt/Zanzotto criterion implies that weak uniqueness holds for $\alpha\in (1,2]$ if and only if $\int_0 \sigma^{-\alpha} (y)\dd y=\int_0 |y|^{-\beta\alpha}\dd y=+\infty$ around $0$, which is equivalent to $\beta > \frac 1 \alpha$ and thus coincides with the Yamada-Watanabe-Bass criterion for pathwise uniqueness. Since $\sigma(x)=|x|^\beta$ is Lipschitz continuous for $\beta>1$ the Engelbert-Schmidt/Zanzotto integral test must be wrong for $\alpha\in (0,1)$. Let us check our modified integral test with the additional factor $|y|^{\alpha-1}$ but ignoring the thin sets (which we justify later in Corollary \ref{sde thm1b} as $\sigma$ has a monotone zero at $0$):
$$\int_0 \sigma(y)^{-\alpha}|y|^{\alpha-1}\dd y=\int_0 |y|^{-\beta \alpha}|y|^{\alpha-1}\dd y=\int_0 |y|^{\alpha(\beta-1)-1}\dd y=+\infty\quad \Leftrightarrow\quad \beta>1.$$
Hence, (i) implies that weak uniqueness for the SDE \eqref{power} holds if and only if $\beta>1$, the Lipschitz case. In contrast to $\alpha\in (1,2)$ the statement is independent of $\alpha$ which bears some similarity to the results of Bass/Burdy/Chen \cite{BBC}. Of course, this is an artefact of $\sigma$ being a polynomial, in the general setting our integral test is not independent of $\alpha$.

\textbf{Thin sets}: The next example is helpful for understanding how the path behavior of the driving stable process forces the thin sets to appear in the integral test. For a symmetric stable process with $\alpha\in (0,1)$ we can choose a $\P_0$-thin set $A$ with positive potential (for example a union of well-chosen small disjoint intervals accumulating at $0$, similarly to Example \ref{ex}). If we define $\sigma=\1_{A^C}$ we can construct a local weak solution by running the stable process until it hits $A$. Since $A$ is thin at $0$, the solution is non-trivial. Without the thin set $B=A$ in the integral test, statement (i) would fail as $\sigma^{-\alpha}$ equals $+\infty$ on $A$ and $A$ has positive potential. The situation is simpler for $\alpha\in (1,2]$ because in that case a set is thin at 0 if and only if its complement contains a ball around 0.\smallskip

	The present article only deals with SDEs for symmetric stable processes with $\alpha\in (0,1)$, exploiting crucially the transience. We were only able to derive partial results for the Cauchy process ($\alpha=1$) which we defer to future research.

\subsection*{Organisation of the article}
The article is organised as follows. The core of the work is carried out in Section \ref{sec:perpetualintegral}, where we present the general integral test for infinite-time-horizon path integrals $\int_0^\infty f(X_s)\dd s$ for general standard processes. By killing these processes in various ways we deduce integral tests for finite-time-horizon integral tests $\int_0^t f(X_s)\dd s$ in Sections \ref{sec:finiteperpetual} and \ref{sec:finiteperpetualtwo}. In Section \ref{sec:SDE} the results from Sections \ref{sec:perpetualintegral}, \ref{sec:finiteperpetual} and \ref{sec:finiteperpetualtwo} are translated into SDE theorems using Zanzotto's time-change techniques. 

\subsection*{Acknowledgements}
The authors wish to thank Quan Shi for many fruitful ideas and discussions regarding the work of Section \ref{sec:finiteperpetual}, and Philip Wei\ss mann and Lukas Trottner for their input and insight on Markov processes. Great thanks also to Mateusz Kwa\'sniki, whose knowledge of potential theory was invaluable at several crucial moments.


\section{Infinite-Time Path Integral Tests}\label{sec:perpetualintegral}

\subsection{Setting}\label{sec:setting}
Before we begin it is worthwhile precisely establishing the setting in which our theorems are proved, as several aspects of the proofs rely upon it. We do not assume anything out of the ordinary, and the reader familiar with the potential theory of Markov processes can skip ahead to Section \ref{mainthm}.

Take $E$ to be a locally compact Hausdorff space with a countable base, and let $\Delta$ be adjoined to $E$ as the point at infinity if $E$ is non-compact, and as an isolated point if $E$ is compact. Let $\mathcal E$ be the Borel $\sigma$-algebra on $E$, and $\mathcal E_\Delta$ the Borel $\sigma$-algebra on $E_\Delta \coloneqq E \cup  \{\Delta\}.$ Let $D$ be the space of paths $w : [0,\infty] \to E_\Delta$ such that $w(\infty)=\Delta$, and if $w(t)=\Delta$ then $w(s)=\Delta$ for all $s\ge t$.
Let ($X_t$, $t\in [0,\infty]$) be the family of coordinate maps $X_t:D \to E_\Delta$, i.e. $X_t(w) = w_t \text{ for all }t\in[0,\infty],$ and denote by $\mathcal F_t = \sigma$($X_s$, $0\le s \le t$) and $\mathcal F = \sigma$($X_s$, $s\in [0,\infty]$) the canonical filtration of $X$. Let $(\theta_t, t\in[0,\infty])$ be the family of translation maps $\theta_t: D\to D:w \mapsto (w_{t+s}, s\ge0)$, and let ($\P_x$, $x\in E_\Delta$) be a family of probability measures on $(D, \mathcal F)$ satisfying the following conditions:
\begin{enumerate}
	\item \emph{(regularity)} For each measurable $Y:D\to E$ and each $B\in\mathcal E$, the map $x \mapsto \P_x(Y\in B)$ is $\mathcal E_\Delta$-measurable.
	\item \emph{(normality)} $\P_x(X_0 = x) = 1$ for all $x\in E_\Delta$.
	\item \emph{(càdlàg paths)} The path functions $t\to X_t(w)$ are right continuous on $[0,\infty)$ and have left limits on $[0,\zeta)$ $\P_x$-almost surely for all $x\in E$, where the random time $\zeta(w):= \inf\{t>0 : X_t(w) = \Delta\}$ is called the \emph{lifetime} of $X$.
	\item \emph{(quasi-left-continuity)} For any sequence ${T_n}$ of $\mathcal F_t$-stopping times with limit $T$, it holds that $X_{T_n} \to X_T$ $\P_x$-almost surely on $\{T<\zeta\}$, for all $x\in E$.
	\item \emph{(strong Markov property)} For all $x\in E_\Delta$, stopping times $T$, $s\in[0,\infty]$, and bounded measurable $f$ the strong Markov property holds:
	$$
		\E_x[f(X_{T+s}) | \mathcal F_T] = \E_{X_T}[f(X_s)]\qquad \P_x\text{-almost surely.}
	$$
\end{enumerate}

The family ($\P_x$, $x\in E_\Delta$) is called a standard Markov process on state space $(E,\mathcal E)$, with cemetary state $\Delta$ and lifetime $\zeta$. It is often more convenient to refer to $X$ (with associated laws $\P_x$) as the process. The assumption that $E$ is Hausdorff ensures that compact subsets of $E$ are closed, and therefore $\mathcal E$-measurable. In the proofs that follow we shall also work with a strong Markov process (a time-changed Markov process) which may not satisfy either (ii) or (iv). 

 For future use we define, for a Borel set $B\in \mathcal E_\Delta$, the random times
\begin{align*}
	D_B  \coloneqq \inf\{t\ge 0 : X_t \in B\}, \quad	T_B  \coloneqq \inf\{t> 0 : X_t \in B\}, \quad	L_B  \coloneqq \sup\{t\ge 0 : X_t \in B\}.
\end{align*}
Both $D_B$ and $T_B$ are stopping times, and are called the \emph{first entry time} and \emph{first hitting time} of $B$ respectively. $L_B$ is not in general a stopping time, and is called the \emph{last exit time} of $B$. It is convention to set $\inf \emptyset = \infty$ and $\sup \emptyset = 0$. \smallskip

The \emph{(infinite time horizon) path integral} over a Markov process $X$ on state space $(E,\mathcal E)$ and non-negative $\mathcal E_\Delta$-measurable function $f:E_\Delta \to [0,\infty]$ is defined to be
$$
	I^f_\infty = \int_0^\infty f(X_s) \dd s.
$$
If $X$ has semigroup $(P_t)_{0<t<\infty}$ and lifetime $\zeta$, then the potential measure of $X$ is defined as
$$
	U(z,B) = \int_0^\infty P_t(z,B) \dd t = \E_z \Big[ \int_0^\zeta \1_{(X_t\in B)} \dd t \Big],
	\quad \text{where }P_t(z,B)=\P_z(X_t\in B),
$$
for $x\in E$ and measurable $B$, with corresponding potential operator 
$$
	Uf(z):= \int_E f(x) \, U(z,\ddd x)=\E_z\Big[\int_0^\zeta f(X_s)\dd s\Big]
$$
for $B\in\mathcal E_\Delta$ and $f$ measurable. In the Markov process literature, functions $f$ on $E$ are commonly extended to $E_\Delta$ by setting $f(\Delta)=0$, and this conveniently allows us to replace $\zeta$ in the definition of $U$ by $\infty$. In what follows we will study how finiteness of $I^f_\infty$ (almost surely and with positive probability) is related to the law of the potential operator $Uf$. 

Several complementary concepts of transience exist for Markov processes. The classical definition is that a standard Markov process on state space $(E,\mathcal E)$ is transient if there exists a strictly positive, universally measurable function $h$ such that $Uh$ is finite everywhere. This concept is not strong enough for our purposes, and we instead define a \emph{transient standard Markov process} as one which simultaneously satisfies the following two conditions:
\begin{enumerate}
	\item $U(\,\cdot\, ,K)$ is bounded for all compact $K$;
	\item $\P_z(L_K < \infty) = 1$ for all compact $K$, all $z\in E$.
\end{enumerate}
Chung and Walsh \cite{ChungWalsh} §3.7 show that both conditions above imply classical transience, and in Corollary 2.3 of his seminal paper Getoor \cite{Getoor80} demonstrated that, under a regularity condition on the excessive functions of $X$, 
they are equivalent to it. If further
\begin{enumerate}[resume]
	\item $\P_z(L_K < \zeta) = 1$ for all compact $K$, all $z\in E$,
\end{enumerate}
we say that $X$ is \emph{strongly transient}. Clearly any transient process with an almost surely infinite lifetime is also strongly transient. It is worth mentioning that for Lévy processes all the definitions of transience discussed above are automatically equivalent - see for example Sato \cite{Sato} Theorem 35.4 - and in addition are equivalent to the condition that
\begin{enumerate}[resume]
	\item $\displaystyle \lim_{t\to\infty} \abs{X_t}=\infty$ almost surely.
\end{enumerate}
Our analysis of path integrals uses the trick of viewing a path integral as the explosion time of a time-changed process. To define the time-change we use the finite-time path integrals
\begin{align*}
	I^f_t = \int_0^t f(X_s) \dd s, \quad t\in[0,\infty).
\end{align*} 
Note that the convention $f(\Delta)=0$ allows us to write either $t\in [0,\infty)$ or $t\in [0,\zeta)$. The right-continuous inverse of $(I^f_t$, $t\ge0)$ will be denoted by $\varphi^f$.
Each $\varphi_t^f$ is a stopping time for $X$. When $f$ is unambiguous we shall drop it from the notation. 
The time-changed process $(Y_t$, $t\in[0,\infty])$ of interest is defined as
\begin{align}\label{time change}
	Y_t = X_{\varphi_t}\text{ for } t\in[0,\infty), \qquad Y_\infty = \Delta,
\end{align}
which moves on the same state space $E$ as $X$. It is not immediately clear which of the properties of $X$ are inherited by $X_\varphi$. Volkonskii \cite{Volkonskii58} proved that $X_\varphi$ is a strong Markov process if 
\begin{align}\label{volk2}
	\text{$t\mapsto I_t$ is continuous on the whole of $[0,\infty)$,}
\end{align}
which is equivalent to assuming $ I_{\infty}=\bar I:=\sup_{t\in[0,\infty) : I_t < \infty}  I_t.$ Note that
\begin{align}\label{leif}
	\varphi_{\bar I}=\varphi_\infty:=\lim_{t\to\infty} \varphi_t.
\end{align}
A sufficient condition for \eqref{volk2} to hold almost surely, though not necessary, is that
\begin{align}\label{volk4}
	\text{$I_t < \infty$ for all $t\in[0,\infty)$ almost surely}
\end{align}
which is what we are going to check when we use the strong Markov property of the time-change. This implies (indeed is equivalent to) the property $\varphi_{\infty} = \infty$ almost surely, and we get
\begin{align}\label{volk3.2}
	\int_0^\infty \sigma(X_{\varphi_s}) \dd s = \int_0^\infty \sigma(X_t) f(X_t) \dd t  \quad \text{almost surely.}
\end{align}
This will be of particular use to us later in this section.

As alluded to earlier, in contrast to existing theory of stable SDEs for $\alpha\in (1,2]$, our analysis is based on general Markov process theory. For this reason we now introduce an object which commonly appears in potential theory of Markov processes, the so-called fine topology. If $M$ is a Borel set then $x\in E$ is called a regular point for $M$ if $\P_x(T_M>0)=0$. We denote the set of regular points by $M^r$, and note that $X_{T_M}\in M^r\cup M$ $\P_y$-almost surely for all $y\in E$. A measurable set is called \emph{finely closed} if $M^r\subseteq M$ and finely open if the complement is finely closed. The collection of finely open sets with compact closure $\overline M$ in $E$ forms the base of a topology called the fine topology. It is immediately seen that right-continuity of paths of $X$ implies that all open sets are finely open, and therefore that closed sets are finely closed.

We will also make use of a result on the regularity of $q$-excessive functions, for which we refer to Theorem VI(4.9) of Blumenthal and Getoor \cite{BG} or Theorem 13.80 of Chung and Walsh \cite{ChungWalsh}. If $X$ is a standard Markov process on $(E,\mathcal E)$ with strong Feller resolvents (and dual resolvents, which is automatically fulfilled for Lévy processes) then the following are equivalent.
\begin{enumerate}
	\item[\namedlabel{Hunt condition}{(H)}] If $K\in E$ is a non-polar set then some point of $K$ must be regular for $K$;
	\item[\namedlabel{Hunt regularity}{(R)}] If $f$ is a locally integrable $q$-excessive function for some $q>0$, then $t\mapsto f(X_t)$ is continuous whenever $t\mapsto X_t$ is continuous on $[0,\zeta)$.
	\item[\namedlabel{Hunt regularity alt}{(R$)^\prime$}] If $f$ is a locally integrable $q$-excessive function for some $q>0$, and $T_n$ is an increasing sequence of stopping times with limit $T$, then $f(X_{T_n})\to f(X_T)$ on $\{T < \zeta\}$.
\end{enumerate}
The assumption that $X$ has strong Feller resolvents is fairly strong for general standard Markov $X$, but Hawkes \cite{Hawkes} showed that for Lévy processes it is equivalent to existence of a density $u^q$ for the $q$-potential measure, which is known to hold for all symmetric stable processes. Property \ref{Hunt condition} is commonly known as Hunt's condition, see for instance Chapter 13 of Chung and Walsh \cite{ChungWalsh} for equivalent formulations. Hunt's condition is satisfied by symmetric Lévy processes, but for general Lévy processes the question of whether or not it holds remains an open problem (see for instance Chapter VI.4 in Blumenthal and Getoor \cite{BG} and Chapter II.7 in Bertoin \cite{Bertoin}). Property \ref{Hunt regularity} is called regularity of $f$, and it is interesting to compare it with the fact that for every standard Markov process $X$, $t\mapsto f(X_t)$ is right-continuous and has left-hand limits on $[0,\infty)$ for any $q$-excessive function $f$.
\subsection{Positive Probability Case}\label{mainthm}

The classification of finiteness and the computation of distributions of path integrals $I^f_\infty$ and $I^f_t$ has a rich history. Integral tests were derived for different classes of stochastic processes under different assumptions on $f$. Applications can be found for instance in SDEs, branching processes, or random walks in random environments. The closest article in its general form was recently published by Kolb and Savov \cite{KS}. In the case of a L\'evy process drifting to $+\infty$ a complete integral-test characterisation of finiteness was proved under rather strong assumptions on $f$ - most importantly, $f$ was assumed finite:
\begin{theorem}[Kolb/Savov \cite{KS}]
If the measurable function $f:\R\to [0,\infty)$ is either continuous or ultimately non-increasing and $X$ is a transient Lévy process with limit $+\infty$, the following are equivalent:
\begin{enumerate}
	\item $\P_z\big( \int_0^\infty f(X_s) \dd s = \infty\big) > 0;$
	\item $\P_z\big( \int_0^\infty f(X_s) \dd s = \infty\big) = 1;$
	\item The integral test $\int_{\R\setminus B} f(x) \, U(z, \ddd x) = \infty$ holds for all Borel sets $B$ such that $\P_z(L_B < \infty) = 1$. 
\end{enumerate}
\end{theorem}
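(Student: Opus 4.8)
The plan is to obtain the theorem by specialising the general path integral theorem stated above, supplying two extra ingredients specific to the present setting: the $0$--$1$ law (i)$\Leftrightarrow$(ii), and a reconciliation of the two notions of removable set --- ``$\P_z$-avoidable'', as in the general theorem, and ``$\P_z(L_B<\infty)=1$'' here. Two preliminary facts are used throughout. First, a transient L\'evy process carries no killing, so $\zeta=\infty$, $\int_0^\zeta=\int_0^\infty$, and $X_t\to+\infty$ almost surely. Second, under either hypothesis on $f$ the finite-time integrals $I^f_t=\int_0^t f(X_s)\dd s$ are almost surely finite for every $t$: if $f$ is continuous it is locally bounded and the c\`adl\`ag path is bounded on $[0,t]$, while if $f$ is ultimately non-increasing one splits off its bounded tail and controls the remaining bounded region using transience.

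The implication (ii)$\Rightarrow$(i) is trivial, and (i)$\Rightarrow$(iii) I would prove by contraposition. Assume there is a Borel set $B$ with $\P_z(L_B<\infty)=1$ and $\int_{\R\setminus B}f(x)\,U(z,\dd x)<\infty$; the latter reads $\E_z\big[\int_0^\infty \1_{B^c}(X_s)f(X_s)\dd s\big]<\infty$, hence $\int_0^\infty \1_{B^c}(X_s)f(X_s)\dd s<\infty$ almost surely. Since $X_s\notin B$ for every $s>L_B$, this bounds $\int_{L_B}^\infty f(X_s)\dd s$; and $\int_0^{L_B}f(X_s)\dd s=I^f_{L_B}<\infty$ almost surely, because $L_B<\infty$ almost surely and $I^f_t<\infty$ almost surely for each fixed $t$. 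Thus $\int_0^\infty f(X_s)\dd s<\infty$ almost surely, contradicting (i); note this even gives $\P_z\big(\int_0^\infty f(X_s)\dd s=\infty\big)=0$, so the dichotomy already holds in this case.

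To close the equivalences I would use the general path integral theorem, which with $\zeta=\infty$ reads: $\P_z\big(\int_0^\infty f(X_s)\dd s<\infty\big)>0$ if and only if $\int_{\R\setminus B_0}f\,U(z,\dd x)<\infty$ for some $\P_z$-avoidable set $B_0$. Comparing with (iii), which quantifies instead over sets with $\P_z(L_B<\infty)=1$, it remains to see that these two families of ``removable'' sets produce the same integral tests; granting this, the general theorem (in its negated form) yields (ii)$\Leftrightarrow$(iii), and together with (i)$\Rightarrow$(iii) and (ii)$\Rightarrow$(i) all three are equivalent. One inclusion was essentially done above: a set $B$ with $\P_z(L_B<\infty)=1$ and convergent integral test forces $\int_0^\infty f(X_s)\dd s<\infty$ almost surely, whence by the general theorem some $\P_z$-avoidable set has convergent integral test. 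For the other inclusion, given an avoidable $B_0$ with $\int_{\R\setminus B_0}f\,U(z,\dd x)<\infty$ one must exhibit a set $B$ with $\P_z(L_B<\infty)=1$ and $\int_{\R\setminus B}f\,U(z,\dd x)<\infty$; because $X_t\to+\infty$, every half-line $(-\infty,M]$ has $\P_z(L_{(-\infty,M]}<\infty)=1$, so it suffices to produce $M$ with $\int_{(M,\infty)}f\,U(z,\dd x)<\infty$. This is the crux, and it is here that the hypotheses on $f$ --- locally bounded near $+\infty$, resp.\ ultimately monotone --- combine with the renewal behaviour of the potential of $X$ at $+\infty$: decomposing a path of $X$ through its running supremum and the excursions below it (the Wiener--Hopf / ascending ladder decomposition), the contribution to $\int_0^\infty f(X_s)\dd s$ coming from high levels is comparable to a path integral of the ascending ladder height subordinator $H$, whose potential is the renewal measure of $H$ and dominates a multiple of Lebesgue measure on $(M,\infty)$; hence $\int_{(M,\infty)}f\,U(z,\dd x)=\infty$ for every $M$ would force $\int_0^\infty f(X_s)\dd s=\infty$ almost surely, contradicting (i) and giving in particular the dichotomy (i)$\Leftrightarrow$(ii).

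The main obstacle is exactly this last point --- the analysis at $+\infty$. The rest is bookkeeping around the general theorem together with the elementary last-exit-time estimate; the genuinely new work is (a) an almost-sure lower bound $\int_0^\infty f(X_s)\dd s\gtrsim\int^{+\infty}f(x)\dd x$ valid along almost every path, not merely in mean --- for which I expect to need either a Borel--Cantelli argument over successive unit-level crossings, with the near-independence of the crossing times furnished by the strong Markov property, or the ladder-subordinator reduction above --- and (b) the matching renewal integral test and $0$--$1$ law for $H$. An essentially self-contained alternative bypasses the general theorem: show directly that $\big\{\int_0^\infty f(X_s)\dd s<\infty\big\}$ is invariant under every shift $\theta_t$ (using $I^f_t<\infty$ almost surely), deduce from the martingale convergence theorem applied to $\P_z(\,\cdot\mid\mathcal F_t)=h(X_t)$ that $h(x):=\P_x\big(\int_0^\infty f(X_s)\dd s<\infty\big)$ satisfies $h(X_t)\to\1_{\{\int_0^\infty f(X_s)\dd s<\infty\}}$ almost surely, and combine this with $X_t\to+\infty$ and the existence of a limit of $h$ at $+\infty$ --- monotone in $x$ when $f$ is ultimately non-increasing, and obtained by coupling two shifted copies of $X$ when $f$ is continuous --- to conclude that $h$ is constant, equal to $0$ or $1$; the integral test is then extracted via the same ladder decomposition.
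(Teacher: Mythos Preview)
The paper does not give an independent proof of this statement --- it is cited as a result of Kolb and Savov --- but it does recover it (under the slightly different hypothesis that $f$ is bounded on compacts) as Theorem~\ref{ASIT}, obtained from the general integral test (Theorem~\ref{integral test}) via Corollary~\ref{ASIT0+} and the triviality of the tail $\sigma$-algebra of a L\'evy process. Your skeleton is the same, but the step you call ``the crux'' is much easier than you make it.

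The reconciliation of avoidable sets and transient sets is a one-line observation, not a Wiener--Hopf argument. The event $\{L_B<\infty\}=\bigcup_{n}\bigcap_{s>n}\{X_s\notin B\}$ lies in the tail $\sigma$-algebra $\bigcap_t\sigma(X_s,\,s\ge t)$, and for a L\'evy process this $\sigma$-algebra is trivial; hence $\P_z(L_B<\infty)\in\{0,1\}$ for every Borel $B$. If $B_0$ is $\P_z$-avoidable then $\P_z(D_{B_0}=\infty)>0$, so $\P_z(L_{B_0}<\infty)\ge\P_z(L_{B_0}=0)>0$, and therefore $\P_z(L_{B_0}<\infty)=1$: the avoidable set produced by the general theorem is \emph{already} transient, and there is nothing to reconcile. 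No ladder decomposition, no renewal estimates, no half-lines are needed. Combined with your (correct) contrapositive argument for (i)$\Rightarrow$(iii), this closes all the equivalences. Your ``self-contained alternative'' via shift-invariance of $\{I^f_\infty<\infty\}$ is in fact exactly this: once $I^f_t<\infty$ a.s.\ for each $t$, the event is tail-measurable up to null sets, and tail triviality gives the $0$--$1$ law directly.

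A minor point: your justification that $I^f_t<\infty$ a.s.\ in the ultimately non-increasing case is incomplete. ``Ultimately non-increasing'' gives no control on $f$ below some level $M_0$, so $f$ need not be locally bounded there, and transience alone does not make $\int_0^t f(X_s)\1_{\{X_s\le M_0\}}\dd s$ finite. The paper sidesteps this by working under the hypothesis that $f$ is bounded on compacts (Theorem~\ref{ASIT}); if you want the exact Kolb--Savov hypotheses you will need an additional argument for that case.
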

The sets appearing in (ii) are called \emph{$\P_z$-transient} sets. The occurrence of transient sets is clearly necessary as the finite time occupation of a transient set does not influence finiteness of the path integral due to the assumptions on $f$.
In Kolb and Savov's theorem either of the two assumptions on $f$, in combination with the fact that $X$ is a Lévy process, is enough to ensure that the zero-one law $\P_z(I^f_\infty<\infty)\in \{0,1\}$ holds. It might be unclear how to use the theorem due to the occurrence of the stochastically defined transient sets. Kolb and Savov provide several examples of L\'evy processes for which the sets can be characterised. For example, if $X$ has local time at points, the theorem works with $B=(-\infty,a]$, $a\in\R$, and recovers the integral test at infinity of \cite{DK1}. 

The present article extends the Kolb-Savov theorem in two directions. First, we shall allow extended measurable functions $f:\R\to [0,+\infty]$ without any further assumptions, and second, we shall consider $X$ to be a general standard Markov process. The generalisation to extended measurable functions is crucial in order to study SDEs with singular coefficients, such as $\sigma(x)=|x|^{\beta}$, since path integrals for $f=\sigma^{-\alpha}$ need to be analysed as part of the representation of solutions via a time-change. The generalisation to standard Markov processes is useful for instance to understand finite time-horizon path integrals by applying the infinite time-horizon theorem to killed processes.

The main difficulty arising from our weaker assumptions on $f$ is a breakdown of the zero-one law for path integrals, even in the case of a Lévy process. As an example, let $X$ be a symmetric $\alpha$-stable process with $\alpha\in (0,1)$. Then it is well-known that $\P_0(T_{[1,2]}<\infty)\in (0,1)$, which implies that $\P_0\left(\int_0^\infty +\infty \1_{[1,2]}(X_s)\dd s<\infty\right)\in (0,1)$. Similarly to transient sets in the setting of Kolb and Savov, avoidable sets must play a special role in the setting of path integrals for which zero-one laws fail. As a consequence we will prove different theorems for the positive probability case and the probability one case.

In this section we provide a complete classification of finiteness with positive probability of infinite-time horizon path integrals for standard Markov processes. The almost sure case and the general form of the Kolb/Savov theorem will be treated in the next section.

\begin{definition}\label{supportivedef}
Let $X$ be a standard Markov process on $E$ with lifetime $\zeta$. A Borel set $B\in \mathcal E$ is called $\P_z$-avoidable if $\P_z(D_B < \zeta) < 1$. If $B$ is avoidable then its complement $M=E\setminus B$ is called $\P_z$-supportive, and satisfies
\begin{align*}
	\P_z(X_t\in M\text{ for all }t\in [0,\zeta))>0.
\end{align*}
\end{definition}	
In general it is not at all clear what form avoidable or supportive sets should take but a vast literature exists for special processes. Here are the most relevant examples for the present article:
\begin{itemize}
	\item If $X$ is a recurrent Markov process then only polar sets are avoidable.
	\item If $X$ is a symmetric stable process on $\R$ of index $\alpha \in (1,2]$ then only the empty set is avoidable.
	\item If $X$ is a symmetric stable process on $\R$ of index $\alpha \in (0,1)$ then any compact set not containing $z$ is $\P_z$-avoidable.
\end{itemize}
We can now formulate our general theorem on path integrals, an integral test which is not a zero-one law, and which requires minimal assumptions on $f$ and $X$.
\begin{theorem}\label{integral test}
Let $f:E \to [0,+\infty]$ be measurable and $X$ a standard Markov process with (possibly infinite) life time $\zeta$. Let $z\in E$. 
Then the following are equivalent.
\begin{enumerate}
	\item $\P_z\big( \int_0^\infty f(X_s) \dd s < \infty\big) > 0;$
	\item The integral test $\int_{E\backslash B} f(x) \, U(z, \ddd x) < \infty$ holds for a $\P_z$-avoidable set $B$.
\end{enumerate}
\end{theorem}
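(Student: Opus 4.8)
The proof splits into the two implications, with all of the difficulty concentrated in $(i)\Rightarrow(ii)$.

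The implication $(ii)\Rightarrow(i)$ is elementary. Suppose $B$ is $\P_z$-avoidable and $\int_{E\setminus B}f(x)\,U(z,\ddd x)<\infty$. By the definition of the potential operator the latter equals $\E_z\big[\int_0^\zeta(\1_{E\setminus B}f)(X_s)\dd s\big]$, so $\int_0^\zeta(\1_{E\setminus B}f)(X_s)\dd s$ is $\P_z$-a.s.\ finite. By Definition \ref{supportivedef} the supportive set $M=E\setminus B$ satisfies $\P_z\big(X_s\in M\text{ for all }s\in[0,\zeta)\big)>0$, and on this event $\int_0^\zeta f(X_s)\dd s=\int_0^\zeta(\1_M f)(X_s)\dd s<\infty$ (recall $f(\Delta)=0$). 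Intersecting an event of full probability with one of positive probability yields $(i)$.

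For $(i)\Rightarrow(ii)$ the plan is to realise the path integral as the lifetime of a time-changed process. Assume $\P_z(\Lambda)>0$ with $\Lambda:=\{I^f_\infty<\infty\}$ and let $Y=X_{\varphi^f}$ be the time-change \eqref{time change}. Whenever $Y$ is a bona fide strong Markov process, a change of variables in the occupation measure (in the spirit of \eqref{volk3.2}) yields the potential identity $U^Y(z,\ddd x)=f(x)\,U(z,\ddd x)$ together with $\zeta_Y=I^f_\infty$; so proving $(ii)$ amounts to exhibiting a $\P_z$-avoidable set of finite $Y$-potential. Because $I^f$ may jump to $+\infty$, one must first kill $X$ at the explosion time $\tau_\infty=\inf\{t:I^f_t=\infty\}$ in order to apply Volkonskii's theorem \eqref{volk2}; this is harmless on $\Lambda$, where $\tau_\infty\ge\zeta$, and the set $\{x:\P_x(\tau_\infty<\zeta)=1\}$ will be shown (by the same reasoning as below) to be $\P_z$-avoidable and is absorbed into $B$. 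A further Doob transform by $h(x)=\P_x(\Lambda)$, which is invariant for the $\tau_\infty$-killed process and under which $I^f_\infty<\infty$ holds almost surely, reduces everything to producing, for a Markov process with $\P_z(\zeta<\infty)>0$, a $\P_z$-avoidable set whose complement has finite potential mass.

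Fix $n$ with $\P_z(I^f_\infty<n)>0$ and take as candidate $B:=\{x\in E:\P_x(I^f_\infty<n)=0\}$, enlarged by the (avoidable) irregular boundary points needed to make $E\setminus B$ finely closed. The identity $I^f_\infty=I^f_t+I^f_\infty\circ\theta_t$ and the Markov property give $\P_z\big(I^f_\infty<n,\ X_t\in B,\ t<\zeta\big)=0$ for every $t\ge0$; integrating in $t$ shows that on $\{I^f_\infty<n\}$ the process $X$ spends zero Lebesgue time in $B$, and the fine-topology facts recalled in Section \ref{sec:setting} — notably $X_{D_B}\in B\cup B^r$ and the regularity properties \ref{Hunt condition}, \ref{Hunt regularity} — upgrade this to ``$X$ never enters $B$'', so $B$ is $\P_z$-avoidable. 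It then remains to prove $\E_z\big[\int_0^\zeta\1_{E\setminus B}(X_s)\dd s\big]<\infty$: one decomposes the expected occupation of $E\setminus B$ along the entrance and last-exit times of the increasing family $\{x:\P_x(I^f_\infty<n)>0\}$, $n\ge1$, and controls each block using transience of the (time-changed) process — so that compact blocks have finite potential — together with the transform identity $U^h(z,\ddd x)=h(x)U(z,\ddd x)/h(z)$, which keeps the relevant mass in a region where $h$ is bounded away from $0$.

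The main obstacle is exactly this final estimate. Finiteness almost surely does not imply finiteness in mean, so $B$ cannot be read off directly from the almost-sure behaviour; and since $f\,U(z,\cdot)$ may have infinite mass in every neighbourhood of the singular set $\{f=\infty\}$, the set $B$ has to excise a whole neighbourhood of that set — not merely a null set — while remaining small enough to be avoidable. Reconciling these two requirements, and transferring the resulting finiteness back from the killed, $h$-transformed, time-changed process to $X$ itself, is where the fine topology, Hunt's condition and (strong) transience are all brought to bear at once; in the zero-one-law regime of Kolb and Savov this tension does not occur, which is why a genuinely different argument is needed here.
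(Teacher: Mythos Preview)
Your argument for $(ii)\Rightarrow(i)$ is correct and identical to the paper's. For $(i)\Rightarrow(ii)$ your overall strategy---realise $I^f_\infty$ as the lifetime of the time-changed process $Y=X_{\varphi}$ and exhibit a $\P_z$-avoidable set of finite $Y$-potential---is also the paper's, but your execution has a genuine gap. Your candidate $B=\{x:\P_x(I^f_\infty<n)=0\}$ is too small: on $M=E\setminus B$ the function $h(x)=\P_x(I^f_\infty<n)$ is merely positive, not bounded away from zero, and nothing you wrote produces $\int_M f\,U(z,\ddd x)<\infty$. Your Doob $h$-transform reduces the question to exactly the same one (find an avoidable set of finite potential for a process with $\P_z(\zeta<\infty)>0$), and to close it you invoke strong transience and Hunt's condition \ref{Hunt condition}; but Theorem~\ref{integral test} assumes neither, and your final paragraph effectively concedes that the estimate has not been carried out.

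The paper's fix is to take a \emph{positive} threshold: define the super-finite set $M=\{x:\P_x(I^f_\infty\le n)>c\}$ for some $c\in(0,1)$ with $z\in M$. Lemma~\ref{regular points} shows $E\setminus M$ is finely closed and Proposition~\ref{super finite existence} that $M$ is $\P_z$-supportive. The point of $c>0$ is that, after replacing $f$ by $g=\1_M f$, Lemma~\ref{super finite technical lemma 1} gives the \emph{uniform} bound $\P_y(I^g_\infty\le 2n)>c^2$ for every $y\in E$; this also makes $I^g_t<\infty$ for all $t$ (Lemma~\ref{super finite technical lemma 2}), so Volkonskii applies without any preliminary killing. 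The uniform bound is exactly what drives Getoor's lifetime estimate (Proposition~\ref{Getoor++}): with $h(x)=\P_x(I^g_\infty\le n)$ one has $P_n\1_E\le\1_E-h$ for the time-changed semigroup, a telescoping argument gives $U_Y h\le n$, and $h\ge c^2$ then yields $\int_M f(x)\,U(z,\ddd x)=U_Y\1_E(z)\le n/c^2$. No $h$-transform, no transience, no Hunt's condition is needed.
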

Just as the occurrence of transient sets is clearly needed for the Kolb/Savov theorem, the occurrence of avoidable (or supportive) sets is needed here since $f$ can be anything in avoidable sets without harming the positivity of the probability of finite path integral. As for the Kolb/Savov theorem extra knowledge on avoidable sets for particular processes (such as the Wiener test for stable processes) can turn the integral test into a purely analytic statement.

An equivalent formulation of Theorem \ref{integral test} using the supportive sets of Definition \ref{supportivedef} is that the following are equivalent.
\begin{enumerate}
	\item $\P_z\big( \int_0^\infty f(X_s) \dd s = \infty\big) =1;$
	\item The integral test $\int_{M} f(x) \, U(z, \ddd x) =\infty$ holds for all $\P_z$-supportive sets $M$.
\end{enumerate}
Supportive sets will appear most often in our proofs as an explicitly constructed set of `safe points', 
in the sense that those are the issuing points of the state space from which the path integral remains finite with positive probability. The idea of the proof of Theorem \ref{integral test} is to use the observation that $I^f_\infty<\infty$ is equivalent to finite-time explosion of the time-changed strong Markov process $X_\varphi$ from \eqref{time change} if the time-change is well-behaved in the sense of \eqref{volk4}. The latter leads to the introduction of an additional indicator over a so-called super-finite set for the pair $(X,f)$, in the forthcoming Lemma \ref{super finite technical lemma 2} and Proposition \ref{super finite existence}. For the analysis of explosion we exploit an old argument from the general study of transience due to Getoor \cite{Getoor80}, see Proposition \ref{Getoor++}.






In all of this section $X$ is a standard Markov process on state space $(E,\mathcal E)$, and $f:E \to [0,+\infty]$ is a measurable function.

\begin{definition}
A \emph{super-finite} set for $(X,f)$ is defined in relation to some $n\in\mathbb N$ and $c\in(0,1)$ by
$$
	M = \Big\{ y \in E : \P_y\Big(\int_0^\infty f(X_s) \dd s \le n\Big) > c \Big\}.
$$
\end{definition}
The regularity condition on $X$ allows that the pullback of $(c,\infty)$ by $x\mapsto\P_x(I^f_{\infty} \le n)$ is a set in $\mathcal E_\Delta$, and it is trivial to remove the $\Delta$ point and see that $M \in \mathcal E$.

This first lemma shows that if there exists some super-finite set for $(X,f)$ then it is possible to choose another measurable $g:E \to [0,\infty]$ such that the whole state space $E$ is super-finite for $(X,g)$. The constant $c^2$ is not optimal, but it suffices for our use of this lemma.

\begin{lemma}\label{super finite technical lemma 1}
Suppose
$
	M= \{ y \in E : \P_y\big(\int_0^\infty f(X_s) \dd s \le n\big) > c\}
$ 
is a non-empty super-finite set for $(X,f)$ and let $g= \mathbf 1_M\cdot f$. Then
\begin{align*}
	\P_y\Big(\int_0^\infty g(X_s) \dd s \le 2 n\Big) > c^2, \quad \forall y\in E.
\end{align*}
That is, the entire state space $E$ is super-finite for $(X,g)$.
\end{lemma}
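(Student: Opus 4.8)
The plan is to run the process until its first entry into $M$, and thereby reduce finiteness of $I^g_\infty$ to finiteness of $I^f_\infty$ started from inside $M$, where — by the very definition of the super-finite set — the latter has probability exceeding $c$.

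Since $g=\1_M\cdot f\le f$ we have $I^g_t\le I^f_t$ for all $t$; in particular, for $y\in M$ we get $\P_y(I^g_\infty\le 2n)\ge\P_y(I^f_\infty\le n)>c>c^2$, so from now on fix $y\notin M$. Let $T:=D_M=\inf\{t\ge 0:X_t\in M\}$. On $\{T=\infty\}$ the path avoids $M$, hence $g(X_s)=0$ for every $s$ and $I^g_\infty=0$. On $\{T<\infty\}$ no $g$-mass is accrued before $T$ (since $X_s\notin M$ for $s\in[0,T)$ and $\{T\}$ is Lebesgue-null), so $I^g_\infty=I^g_\infty\circ\theta_T$, while $X_T\in M\cup M^r$. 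The strong Markov property at $T$ therefore yields
\[
	\P_y(I^g_\infty\le 2n)\ \ge\ \P_y(T=\infty)+\E_y\!\big[\1_{\{T<\infty\}}\,\P_{X_T}(I^g_\infty\le 2n)\big].
\]

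If $X_T$ always landed in $M$ we would be finished: by the first paragraph $\P_{X_T}(I^g_\infty\le 2n)\ge\P_{X_T}(I^f_\infty\le n)>c$ there, so the right-hand side is at least $\P_y(T=\infty)+c\,\P_y(T<\infty)\ge c$. The work lies entirely in the remaining possibility $X_T\in M^r\setminus M$, and this is precisely where the slack in $2n$ and $c^2$ is consumed. For a starting point $x\in M^r$ we have $\P_x(T_M=0)=1$, so $X$ meets $M$ at arbitrarily small times; I would approximate by the decreasing stopping times $T_\delta:=\inf\{t\ge\delta:X_t\in M\}\downarrow 0$ and use that $M\setminus M^r$ is semipolar (so its occupation is negligible and $X$ in fact reaches genuine points of $M$ along the $T_\delta$), together with the fact that the process issued from a point regular for $M$ does not spend positive time in $M\cap\{f=\infty\}$ — a germ-field zero–one argument, using that points of $M$ have finite path integral with positive probability — so that $t\mapsto I^g_t$ is $\P_x$-a.s.\ continuous at $0$ and $I^g_{T_\delta}\to 0$. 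Hence $\delta$ can be chosen so small that with $\P_x$-probability exceeding $c$ the process reaches a point of $M$ at the finite time $T_\delta$ having accrued $g$-mass at most $n$; one more application of the strong Markov property at $T_\delta$, exactly as in the first paragraph, supplies an additional factor $c$ and an additional budget $n$, giving $\P_x(I^g_\infty\le 2n)>c^2$ for $x\in M^r\setminus M$ as well.

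Substituting back, $\P_y(I^g_\infty\le 2n)\ge\P_y(T=\infty)+c^2\,\P_y(T<\infty)\ge c^2$ for every $y$, with strict inequality inherited from the strict inequalities defining $M$. The main obstacle is the fine-topology bookkeeping at $M^r\setminus M$; every other step is a routine use of the strong Markov property and the monotonicity $g\le f$.
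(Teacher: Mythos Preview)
Your overall decomposition — $y\in M$, then $y\in M^r\setminus M$, then reduce general $y$ to these via the strong Markov property at $D_M$ — is exactly the paper's strategy, and the first and last steps are fine. The difficulty is entirely in the middle case, and there your argument has two genuine gaps.

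\textbf{Landing in $M$.} Your approximating times $T_\delta=\inf\{t\ge\delta:X_t\in M\}$ satisfy $X_{T_\delta}\in M\cup M^r$, not $X_{T_\delta}\in M$. The remark about $M\setminus M^r$ being semipolar does not help: what you would need is control over $M^r\setminus M$, and there is no reason for $X_{T_\delta}$ to avoid that set. So the second application of the strong Markov property, where you want the bound $\P_{X_{T_\delta}}(I^f_\infty\le n)>c$, is not available — that bound only holds on $M$.

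\textbf{Continuity of $I^g$ at $0$.} You assert that $I^g_{T_\delta}\to 0$, equivalently that $I^g_t<\infty$ for small $t$ under $\P_x$ when $x\in M^r$. Your justification — a germ-field zero--one law together with ``points of $M$ have finite path integral with positive probability'' — does not establish this: $x$ need not lie in $M$, and even if the occupation of $\{f=\infty\}\cap M$ were null, a finite $f$ with a non-integrable singularity can still make $I^g_t=\infty$. Note that finiteness of $I^g_t$ for all finite $t$ is precisely the content of the \emph{next} lemma in the paper, which is proved using the present one; invoking it here would be circular.

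The paper resolves both issues at once by approximating $M$ from inside by compact sets $K_m\subseteq M$ with $T_{K_m}\downarrow T_M=0$ (Blumenthal--Getoor, Lemma~I.10.19). Compactness forces $X_{T_{K_m}}\in K_m\subseteq M$, so the super-finite bound applies at the landing point; and instead of claiming $\int_0^{T_{K_m}}g$ is small, the paper writes $\int_0^{T_{K_1}}g=\lim_m\int_{T_{K_m}}^{T_{K_1}}g$ by monotone convergence and bounds each $\int_{T_{K_m}}^{T_{K_1}}g$ by $\int_{T_{K_m}}^\infty f$ via the strong Markov property — never needing to know that $I^g$ is finite near $0$. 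That compact-approximation step is the missing idea in your argument.
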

\begin{proof}
Since $g\le f$, it also holds that $I^g_t \le I^f_t$ for all $t\in[0,\infty]$. Then the result holds immediately for $y\in M$, since by definition of that set
\begin{align*}
	\P_y(I^g_\infty \le 2 n)
	\ge \P_y(I^f_\infty \le 2 n)
	\ge \P_y(I^f_\infty \le n)
	> c > c^2.
\end{align*}
We now prove the result for regular points $y\in M^r$.  Lemma I.10.19 of \cite{BG} gives a nested increasing sequence of compact sets $K_m \subseteq M$, $m\in\mathbb N$, such that, $\P_y$-almost surely, $T_{K_m}\downarrow T_M$ as $m\to\infty$. Since $\P_y(T_M = 0)=1$ it then follows that
\begin{align*}
	& \P_y\Big( \int_{0}^{T_{K_1}} g(X_s)\dd s \le n \Big)\\
	& =\P_y\Big(\bigcap_{m=1}^\infty \Big\{ \int_{T_{K_m}}^{T_{K_1}}g(X_s)\dd s \le n \Big\}\Big)\\
	& =\lim_{m\to\infty}\P_y\Big(  \int_{T_{K_m}}^{T_{K_1}}g(X_s)\dd s \le n \Big)\\
	& \ge \lim_{m\to\infty}\P_y\Big(  \int_{T_{K_m}}^{T_{K_1}}g(X_s)\dd s \le n;\, T_{K_m}<\infty \Big)\\
	& =\lim_{m\to\infty}\int_{K_m} \P_a\Big(\int_0^{T_{K_1}}g(X_s)\dd s \le n \Big)\P_y(X_{T_{K_m}} \in \dd a;\, T_{K_m}<\infty)\\
	& \ge\lim_{m\to\infty}\int_{K_m} \P_a\Big(  \int_0^\infty f(X_s)\dd s \le n \Big)\P_y(X_{T_{K_m}}\in \dd a;\, T_{K_m}<\infty).
\end{align*}
From Lemma I.11.4  of \cite{BG} we have that $X_{T_{K_m}}\in K_m\subseteq M$ almost surely, and thus we obtain the lower bound
$$
	\P_y\Big( \int_{0}^{T_{K_1}} g(X_s)\dd s \le n \Big) > c \cdot \lim_{m\to\infty}\P_y(T_{K_m}<\infty) = c\cdot\P_y(T_M<\infty) = c.
$$
In addition, since $T_{K_m}$ decreases to $T_M=0$ $\P_y$-almost surely, it follows that
$
	\P_y( A ;\, T_{K_m} < \infty) \uparrow \P_y(A)
$
for any $A\in \mathcal F$. This, in combination with the inequality above, gives that there exists a choice of $j\in\mathbb N$ such that 
$$
	 \P_y\Big( \int_0^{T_{K_1}} g(X_s)\dd s \le n ;\, T_{K_j} < \infty\Big) > c.
$$
Monotonicity of the sequence of sets $(K_m$, ${m\in\mathbb N})$ yields $T_{K_1} \ge T_{K_j}$, and thus
\begin{align}\label{eq3.1}
	\P_y\Big( \int_0^{T_{K_j}} g(X_s)\dd s \le n ;\, T_{K_j} < \infty\Big) > c.
\end{align}
Fix this $j$. Next, using the strong Markov property, we get
\begin{align*}
	&\P_y\Big(\int_0^\infty  g(X_s)\dd s \le 2n ;\, T_{K_j} < \infty\Big)\\
	&\ge \P_y
	\Big(
		\int_0^{T_{K_j}} g(X_s)\dd s \le n;\, 
		\int_{T_{K_j}}^\infty  g(X_s)\dd s \le n;
		\, T_{K_j} < \infty
	\Big)\\
	&= \E_y
	\Big[ 
		\1_{\big(\int_0^{T_{K_j}}  g(X_s)\dd s \le n\big)}  
		\1_{(T_{K_j} < \infty)} 
		\E_y\Big[ \1_{\big(\int_{T_{K_j}}^\infty  g(X_s)\dd s \le n\big)} \, \Big | \mathcal F_{T_{K_j}}\Big] 
	\Big]\\
	&= \E_y
	\Big[ 
		\1_{\big(\int_0^{T_{K_j}}  g(X_s)\dd s \le n\big)}  
		\1_{(T_{K_j} < \infty)} 
		\P_{X_{T_{K_j}}}\Big(  {\int_0^{\infty}  g(X_s)\dd s \le n}\Big)
	\Big],
	\intertext{and now since $I^g_{\infty} \le I^f_{\infty}$,}
	&\ge  \E_y
	\Big[ 
		\1_{\big(\int_0^{T_{K_j}}  g(X_s)\dd s \le n\big)}  
		\1_{(T_{K_j} < \infty)} 
		\P_{X_{T_{K_j}}}\Big(  {\int_0^\infty  f(X_s)\dd s \le n}\Big) 
	\Big].
\end{align*}
Since $K_j\subseteq M$ the inner probability is bounded below by $c$. This holds even in the extreme case $K_j=M$, because then $M$ is compact and $X_{T_{K_j}} \in M$ almost surely. The remaining expectation can also be bounded from below by $c$ using \eqref{eq3.1}. In total this leads to
\begin{align*}
	\P_y\Big(\int_0^\infty  g(X_s)\dd s \le 2n\Big) 
	\ge \P_y\Big(\int_0^\infty  g(X_s)\dd s \le 2n;\, T_{K_j} < \infty\Big) 
	> c^2.
\end{align*}	
Thus the lemma is proved for regular points $y\in M^r$. What now remains is to extend it to all $y\in E$. In this case,
\begin{align*}
	&\P_y\Big(\int_0^\infty  g(X_s)\dd s \le 2n\Big) \\
	&\quad = \P_y\Big(\int_0^\infty  g(X_s)\dd s \le 2n ;\, T_M < \infty \Big) + \P_y\Big(\int_0^\infty  g(X_s)\dd s \le 2n ;\, T_M = \infty \Big)\\
	&\quad = \P_y\Big(\int_{T_M}^\infty  g(X_s)\dd s \le 2n ;\, T_M < \infty \Big) + \P_y(T_M = \infty), \\
	&\quad = \int_E \P_a\Big(\int_0^\infty  g(X_s)\dd s \le 2n \Big) \P_y(X_{T_M}\in \dd a;\, T_M < \infty ) + \P_y(T_M = \infty).
\end{align*}
Lemma I.11.4  of \cite{BG} tells us that the integrating measure is concentrated on $M\cup M^r$, and thus we can use what we have already proved for elements of  $M\cup M^r$ to conclude that
\begin{align*}
	\P_y\Big(\int_0^\infty  g(X_s)\dd s \le 2n\Big) 
	& > c^2\P_y(T_M < \infty) + \P_y(T_M = \infty) \\
	& \ge c^2\big(\P_y(T_M < \infty) + \P_y(T_M = \infty)\big) = c^2.
\end{align*}
The proof of the lemma is now complete.
\end{proof}

The next lemma serves as preparation for the proposition that follows it. It shows that restricting $f$ to a super-finite set ensures that the path integral does not explode in finite time, which is exactly condition \eqref{volk4}, and ensures that the time-change of $X$ with the inverse of the path integral is strong Markov.

\begin{lemma}\label{super finite technical lemma 2}
Suppose
$
	M= \big\{ y \in E : \P_y\big(\int_0^\infty f(X_s)\dd s \le n\big) > c\big\}
$
is a non-empty super-finite set for $(X,f)$. Let $g= \mathbf 1_M\cdot f$. Then
 \begin{align*}
 	\P_y\Big(\exists\, t\in(0,\infty): \int_0^t g(X_s)\dd s =\infty\Big)=0,\quad \forall y\in E.
 \end{align*}
\end{lemma}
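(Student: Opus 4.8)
The plan is to prove that $\int_0^t g(X_s)\,\mathrm{d}s<\infty$ for all finite $t$, $\P_y$-almost surely, for every starting point $y\in E$. The key observation is that $g=\mathbf 1_M\cdot f$ only accumulates mass while $X$ is inside $M$, so it suffices to control the time-integral of $g$ over the portion of the trajectory spent in $M$. The natural device is to decompose the path according to its excursions into and out of $M$: since $g$ vanishes outside $M$, the integral $\int_0^t g(X_s)\,\mathrm{d}s$ only increases during time intervals on which $X$ is in (the closure, in the fine sense, of) $M$. Concretely, starting from any $y$ one waits for $D_M$ (or $T_M$), at which point $X$ enters $M\cup M^r$; from there, by the super-finiteness of $M$, the \emph{whole remaining} integral $\int_{D_M}^\infty g(X_s)\,\mathrm{d}s$ is $\le n$ with probability $>c$.

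First I would reduce to the statement that $\P_y(\int_0^\infty g(X_s)\,\mathrm{d}s<\infty)$ is bounded below by a positive constant uniform in $y$, which is essentially Lemma \ref{super finite technical lemma 1}: that lemma already tells us $\P_y(I^g_\infty\le 2n)>c^2$ for all $y\in E$. But finiteness of the \emph{infinite-horizon} integral with positive probability is not by itself enough to rule out finite-time explosion, so the real work is to upgrade ``finite with positive probability'' to ``no explosion, almost surely.'' For this I would argue by contradiction: suppose with positive $\P_y$-probability there is a finite $t^\ast=\inf\{t: I^g_t=\infty\}<\infty$. On this event, $I^g_{t^\ast-}=\infty$ while $I^g_s<\infty$ for $s<t^\ast$. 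Using right-continuity of paths and the strong Markov property at a suitable stopping time approaching $t^\ast$ (e.g. the stopping times $S_k=\inf\{t: I^g_t\ge k\}$, which increase to $t^\ast$ on the explosion event), one finds that from $X_{S_k}$ the residual integral $\int_{S_k}^\infty g(X_s)\,\mathrm{d}s$ is infinite; but $X_{S_k}\in E$, and applying Lemma \ref{super finite technical lemma 1} (via the strong Markov property) at $S_k$ gives $\P_{X_{S_k}}(I^g_\infty\le 2n)>c^2>0$, so with probability at least $c^2$ the residual integral is at most $2n$ — while on the explosion event it is infinite. Taking $k\to\infty$ and using the conditional Borel–Cantelli / tail-triviality type argument along the sequence $S_k$ forces the explosion probability to be $0$.

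A cleaner route, which I would prefer to present, avoids the contradiction and uses a direct estimate: apply the strong Markov property at $S_k=\inf\{t:I^g_t\ge k\}$ to write, for the event $A_k=\{S_k<\infty\}$ (i.e. $\{I^g_\infty\ge k\}$ up to the explosion subtlety),
\begin{align*}
	\P_y\big(I^g_\infty<\infty,\, S_k<\infty\big)
	= \E_y\Big[\mathbf 1_{(S_k<\infty)}\,\P_{X_{S_k}}\big(I^g_\infty<\infty\big)\Big]
	\ge c^2\,\P_y(S_k<\infty),
\end{align*}
since $\P_{X_{S_k}}(I^g_\infty<\infty)\ge\P_{X_{S_k}}(I^g_\infty\le 2n)>c^2$ by Lemma \ref{super finite technical lemma 1}. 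On the other hand the event of finite-time explosion is contained in $\bigcap_k\{S_k<\infty\}\cap\{I^g_\infty=\infty\}$, so it is disjoint from $\{I^g_\infty<\infty\}$, giving $\P_y(S_k<\infty)\le \P_y(I^g_\infty<\infty)+\P_y(\text{explosion})$ and more usefully $\P_y(\text{explosion})\le\P_y(S_k<\infty)$ for all $k$; combining with the displayed inequality and letting $k\to\infty$ (so $\P_y(S_k<\infty)\downarrow\P_y(I^g_\infty=\infty)$, but on the explosion event $S_k<\infty$ for all $k$) pins the explosion probability to $0$. The main obstacle I expect is the bookkeeping around the stopping times $S_k$: one must check that $S_k$ is indeed a stopping time (it is, being a hitting time of a level by the continuous increasing additive functional $I^g$), that on $\{S_k<\infty\}$ one has $X_{S_k}\in E$ rather than $\Delta$ (true because $g(\Delta)=0$ so the integral cannot reach level $k$ at the lifetime), and that the limit $S_\infty=\lim_k S_k$ coincides with the explosion time — so that the event $\{S_\infty<\infty\}$ is exactly the bad event one is trying to exclude. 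Once these measurability and identification points are dispatched, the strong Markov estimate closes the argument.
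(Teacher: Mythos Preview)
Your ``cleaner route'' is correct and is essentially the paper's own argument. Both proofs introduce the level-crossing stopping times of $I^g$ (you call them $S_k$, the paper writes $\varphi^g_k$), invoke Lemma~\ref{super finite technical lemma 1} together with the strong Markov property at these times, and conclude that $\P_y(I^g_\infty=\infty)=0$, which contains the no-explosion statement. The only real difference is in the final step: the paper iterates the Markov estimate to obtain the explicit geometric bound $\P_y(\varphi^g_{2kn}<\infty)\le(1-c^2)^k$, whereas you pass to the limit directly in the inequality $\P_y(I^g_\infty<\infty,\,S_k<\infty)\ge c^2\,\P_y(S_k<\infty)$, using that the left side tends to $0$ while the right side tends to $c^2\,\P_y(I^g_\infty=\infty)$. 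Both routes reach the same conclusion.

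Two minor points. First, the sentence ``giving $\P_y(S_k<\infty)\le \P_y(I^g_\infty<\infty)+\P_y(\text{explosion})$'' is not correct as written, since $\{S_k<\infty,\,I^g_\infty=\infty\}$ is strictly larger than the explosion event (it also contains paths with $I^g_t\uparrow\infty$ as $t\to\infty$ without finite-time blowup); fortunately you do not use this inequality and the subsequent limit argument stands on its own. Second, the opening paragraph about excursions into and out of $M$ is a red herring and can be dropped entirely.
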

\begin{proof}
Fix $y\in E$ and recall the stopping times
$$
	\varphi^g_n = \inf\{t>0 :  I^g_t > n\},\; n\in\mathbb N, \qquad \varphi^g_{\infty} = \lim_{n\to\infty} \varphi^g_n.
$$
Since
\begin{align}\label{eq3.g}
	\varphi^g_n<\infty\quad \Leftrightarrow\quad \int_0^\infty g(X_s)\dd s >n\quad \Leftrightarrow\quad \exists\, t\in(0,\infty): \int_0^t g(X_s)\dd s>n,
\end{align}
we obtain from Lemma \ref{super finite technical lemma 1} that for all $y\in E$,
\begin{align}\label{eq3.10}
	\P_y(\varphi^g_{2n}<\infty)\le 1-c^2.
\end{align}
We then see for $k,n\in\mathbb N$ that
\begin{align*}
	\P_y(\varphi^g_{2kn}<\infty)
	& = \P_y\Big(\exists t>0 : \int_0^t g(X_s)\dd s > 2kn\Big) \\
	& = \P_y\Big(\varphi^g_{2n(k-1)} < \infty\,;\,\exists t'>0 : \int_{\varphi^g_{2n(k-1)}}^{\varphi^g_{2n(k-1)} + t'} g(X_s)\dd s > 2n\Big) \\
	&  = \E_y
	\Big[ 
		\mathbf 1_{(\varphi^g_{2n(k-1)} < \infty)} 
		\E_y
		\Big[
			\mathbf 1_{\big(\exists t'>0 : \int_{\varphi^g_{2n(k-1)}}^{\varphi^g_{2n(k-1)} + t'} g(X_s)\dd s > 2n\big)} 
			\Big| \mathcal F_{\varphi^g_{2n(k-1)}}
		\Big]
	\Big] \\
	& = \E_y
	\Big[  
		\mathbf 1_{(\varphi^g_{2n(k-1)} < \infty)}
		\P_{X_{\varphi^g_{2n(k-1)}}}\Big(\exists t'>0 : \int_0^{t'} g(X_s)\dd s > 2n \Big) 
	\Big] \\
	& = \E_y\Big[\mathbf 1_{(\varphi^g_{2n(k-1)} < \infty)} \P_{X_{\varphi^g_{2n(k-1)}}}\Big(\varphi^g_{2n} < \infty \Big)   \Big],
\intertext{and by \eqref{eq3.10},}
	& \le (1-c^2) \P_y(\varphi^g_{2n(k-1)} < \infty) \\
	&\qquad\quad \vdots \\
	& \le (1-c^2)^k\rightarrow 0, \quad \text{as } k\to\infty.
\end{align*}
Continuity of measures yields $\P_y(\varphi^g_{\infty} < \infty) = 0$, which via \eqref{eq3.g} implies the claim.
\end{proof}

The next proposition is partially motivated by ideas from Getoor \cite{Getoor80}, in particular the proof of Lemma (3.1) there, albeit used in a different fashion. It proves that super-finite sets for $(X,f)$ have finite $f$-potential.

\begin{proposition}\label{Getoor++}
Suppose for $n\in\mathbb N$ and $p\in(0,1)$ that 
$
	M= \big\{ y \in E : \P_y\big(\int_0^\infty f(X_s) \dd s \le \frac n 2\big) > p\big\}
$
is a non-empty super-finite set for $(X,f)$. Then
$$
	\int_M f(x) U(y,\mathrm d x)\leq \frac n {p^2} \quad \text{ for all }y\in E.
$$
\end{proposition}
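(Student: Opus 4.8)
The plan is to reduce the claimed potential bound to an estimate on the mean of a path integral, and then prove that estimate by a Getoor-type renewal iteration built on Lemma~\ref{super finite technical lemma 1}. Put $g=\mathbf 1_M\cdot f$. By the definition of the potential operator and the convention $g(\Delta)=0$,
\begin{align*}
	\int_M f(x)\,U(y,\ddd x)= Ug(y)= \E_y\Big[\int_0^\infty g(X_s)\dd s\Big]= \E_y[I^g_\infty],
\end{align*}
so it suffices to show $\E_y[I^g_\infty]\le n/p^2$ for every $y\in E$.

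First I would record the two inputs. Since $M$ is a non-empty super-finite set for $(X,f)$, Lemma~\ref{super finite technical lemma 1} applied with its parameter $n$ equal to $n/2$ here and $c=p$ gives the uniform tail bound $\P_y(I^g_\infty>n)\le 1-p^2$ for all $y\in E$. Next, Lemma~\ref{super finite technical lemma 2} ensures that $I^g_t<\infty$ for all finite $t$, $\P_y$-almost surely, so that $t\mapsto I^g_t$ is non-decreasing and continuous; consequently the stopping times $\varphi^g_a=\inf\{t>0:I^g_t>a\}$ satisfy $\{\varphi^g_a<\infty\}=\{I^g_\infty>a\}$, and on this event $\varphi^g_a<\zeta$, $X_{\varphi^g_a}\in E$ and $I^g_{\varphi^g_a}=a$, whence $\int_{\varphi^g_a}^\infty g(X_s)\dd s=I^g_\infty-a$.

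The core is then exactly the iteration from the proof of Lemma~\ref{super finite technical lemma 2}. For $k\in\mathbb N$, on $\{\varphi^g_{(k-1)n}<\infty\}$ the event $\{I^g_\infty>kn\}$ coincides with $\{\int_{\varphi^g_{(k-1)n}}^\infty g(X_s)\dd s>n\}$, so conditioning on $\mathcal F_{\varphi^g_{(k-1)n}}$ and using the strong Markov property at this stopping time yields
\begin{align*}
	\P_y(I^g_\infty>kn)= \E_y\Big[\mathbf 1_{(\varphi^g_{(k-1)n}<\infty)}\,\P_{X_{\varphi^g_{(k-1)n}}}(I^g_\infty>n)\Big]\le (1-p^2)\,\P_y(I^g_\infty>(k-1)n),
\end{align*}
hence by induction $\P_y(I^g_\infty>kn)\le(1-p^2)^k$ for all $k\ge0$. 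Integrating the tail then closes the argument:
\begin{align*}
	\E_y[I^g_\infty]=\int_0^\infty \P_y(I^g_\infty>t)\dd t\le n\sum_{k=0}^\infty \P_y(I^g_\infty>kn)\le n\sum_{k=0}^\infty (1-p^2)^k=\frac{n}{p^2}.
\end{align*}

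The step I expect to require the most care is not the renewal estimate but the book-keeping around the time-change: that the process has accumulated exactly $(k-1)n$ units of $g$-mass at time $\varphi^g_{(k-1)n}$ and is still alive there, so that the strong Markov restart genuinely reduces to $\P_{\,\cdot\,}(I^g_\infty>n)$ and not to a shifted or defective variant. This is precisely what Lemma~\ref{super finite technical lemma 2} (through continuity of $I^g$) is designed to supply; granted it, the rest is the short computation above.
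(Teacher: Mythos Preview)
Your proof is correct. Both your argument and the paper's rest on the same two lemmas (Lemma~\ref{super finite technical lemma 1} for the uniform bound $\P_y(I^g_\infty>n)\le 1-p^2$ and Lemma~\ref{super finite technical lemma 2} for the continuity of $t\mapsto I^g_t$), and both ultimately bound the same quantity $\E_y[I^g_\infty]=\int_M f(x)\,U(y,\ddd x)$. The difference is in how the bound is obtained. The paper builds the time-changed process $Y_t=X_{\varphi^g_t}$, observes that $P_n\mathbf 1_E\le \mathbf 1_E - h$ for its semigroup $(P_t)$ with $h(x)=\P_x(I^g_\infty\le n)$, and then runs Getoor's telescoping trick $U_Y h\le \int_0^n P_s\mathbf 1_E\,\ddd s\le n$, followed by $U_Y\mathbf 1_E\le U_Y h/p^2$; a change of variables then identifies $U_Y\mathbf 1_E$ with $\int_M f\,U_X$. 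You bypass the time-changed process entirely and run the renewal iteration directly on $X$ to get the geometric tail $\P_y(I^g_\infty>kn)\le(1-p^2)^k$, then integrate. Your route is more elementary and self-contained (it is essentially the iteration already present in the proof of Lemma~\ref{super finite technical lemma 2}, pushed one step further to a moment bound), while the paper's route makes the link to classical transience arguments for Markov processes more visible. The care you flag about $I^g_{\varphi^g_a}=a$ and $\varphi^g_a<\zeta$ on $\{\varphi^g_a<\infty\}$ is exactly right and is guaranteed by Lemma~\ref{super finite technical lemma 2}.
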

\begin{proof}
With $g=\mathbf 1_M\cdot f$ we introduce the time-changed process $Y_t = X_{\varphi_t^g}$ from \eqref{time change}, where
$$
	\varphi^g_t = \inf \Big\{ s>0 : \int_0^s g(X_u)\dd u > t\Big\}, \quad t\in[0,\infty).
$$
Lemma \ref{super finite technical lemma 2} tells us that \eqref{volk4} and thus \eqref{volk2} are fulfilled almost surely and that therefore, as per the discussion at the start of the chapter concerning the work of Volkonskii \cite{Volkonskii58}, $Y$ is a strong Markov process. For this proof we will use the notation
\begin{align}\label{h}
	h(x) = \P_x(I^g_{\infty} \le n),\quad x \in E.
\end{align}
By Lemma \ref{super finite technical lemma 1} $h$ is bounded below by $p^2$ on $E$. 

We denote by $(P_t)$ the transition operator of $Y$ and by $U_Y$ the corresponding potential operator, to distinguish it from $U_X$ the potential operator of $X$. The function $h$ from \eqref{h} is extended to $E_\Delta$ by setting $h(\Delta)=0$ as usual. We then see that
\begin{align*}
	P_n \1_E(x)
	& = \P_x(X_{\varphi_n^g}\in E) \\
	& = \P_x(\varphi_n^g < \zeta_X) \\
	& = \P_x\Big( \exists\, s<\zeta_X \text{ such that } \int_0^s g(X_u) \dd u > n\Big) \\
	& \le \P_x\Big( \int_0^{\zeta_X} g(X_u) \dd u > n\Big) \\
	& = \mathbf 1_E(x)- h(x)
\end{align*}
for all $x\in E_\Delta$. The main part of the proof is showing that $U_Y h$ is bounded above. Using the definition of the potential, the estimate of $P_n \1_E$ above, and the semigroup property yields
\begin{align}\label{s}
	U_Y h (x)
	&= \lim_{t\to\infty}\int_0^t P_s h(x)\dd s\nonumber\\
	&\leq \lim_{t\to\infty} \int_0^t  \big( P_s \mathbf{1}_E(x)- P_s  P_n\mathbf{1}_E(x)\big)\dd s\nonumber\\
	&= \lim_{t\to\infty} \Big(\int_0^t   P_s \mathbf{1}_E(x)\dd s -\int_0^t  P_{s+n} \mathbf{1}_E(x)\dd s\Big)\nonumber\\
	&= \lim_{t\to\infty} \Big(\int_0^t   P_s \mathbf{1}_E(x)\dd s -\int_n^{t+n}  P_s \mathbf{1}_E (x)\dd s\Big)\\
	&= \lim_{t\to\infty} \Big(\int_0^n   P_s \mathbf{1}_E(x)\dd s -\int_t^{t+n}  P_s \mathbf{1}_E (x)\dd s\Big)\nonumber\\
	&\leq \int_0^n   P_s \mathbf{1}_E(x)\dd s\nonumber\\
	&\leq n,\nonumber
\end{align}
for all $x\in E_\Delta$. It has already been noted that $h$ is bounded below by $p^2>0$ on $E$. Then by monotonicity of the potential operator,
\begin{align}\label{ss}
	U_Y \mathbf 1_E (y) \le \frac 1 {p^2}  U_Y h(y)\quad \text{ for all }y\in E.
\end{align}
Combining \eqref{s} and \eqref{ss} implies $U_Y\mathbf 1_E(y)\leq \frac 1 {p^2} U_Yh(y) \leq \frac{n}{p^2}$ for all $y\in E$. Bearing in mind the following identity for bounded measurable functions $b$, obtained via change of variables and the definition $g=\mathbf 1_M\cdot f$:
\begin{align*}
	U_Y b(y)
	=\E_y\Big[\int_0^{\infty}   b(X_{\varphi^g_s})\dd s\Big]
	=\E_y\Big[\int_0^\infty b(X_t) g(X_t)\dd t\Big]
	=\int_M  b(x) f(x) U_X(y,\mathrm d x),
\end{align*}
we obtain the desired estimate
$$
	\int_M f(x) U_X(y,\mathrm d x) = U_Y\mathbf 1_E(y) < \frac n {p^2}\quad \text{ for all }y\in E.
$$
The proof is now complete.
\end{proof}

Proposition \ref{Getoor++} is a strong result for super-finite sets. All that remains to be done in order to prove Theorem \ref{integral test} is to prove that non-empty super-finite sets are also supportive. Before doing so, another technical lemma is needed.

\begin{lemma}\label{regular points}
For $n\in \mathbb N, c\in (0,1)$, the set
\begin{align*}
	B = \Big\{x\in E :  \P_x\Big(\int_0^\infty f(X_s) \dd s \le n\Big)\le c\Big\}
\end{align*}
contains its regular points, and is therefore finely closed.
\end{lemma}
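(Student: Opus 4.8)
The plan is to show directly that $B^r\subseteq B$; fine closedness of $B$ then follows immediately from the definition, and equivalently the complementary super-finite set $M=E\setminus B$ is finely open. So fix $x\in B^r$, which by definition means $\P_x(T_B=0)=1$, and let us deduce $\P_x(I^f_\infty\le n)\le c$, i.e.\ $x\in B$.

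First I would invoke the same compact-exhaustion tools used in the proof of Lemma \ref{super finite technical lemma 1}. Since $B\in\mathcal E$ --- it is the preimage of $[0,c]$ under the $\mathcal E_\Delta$-measurable map $x\mapsto\P_x(I^f_\infty\le n)$, which is exactly the measurability already recorded for super-finite sets --- Lemma I.10.19 of \cite{BG} provides a nested increasing sequence of compact sets $K_m\subseteq B$ with $T_{K_m}\downarrow T_B=0$ $\P_x$-almost surely, while Lemma I.11.4 of \cite{BG} gives $X_{T_{K_m}}\in K_m\subseteq B$ $\P_x$-almost surely on $\{T_{K_m}<\infty\}$. The key point is the additivity $I^f_\infty = I^f_{T_{K_m}} + I^f_\infty\circ\theta_{T_{K_m}}$ on $\{T_{K_m}<\infty\}$; since $I^f_{T_{K_m}}\ge 0$ this yields, on that event, the inclusion $\{I^f_\infty\le n\}\subseteq\{I^f_\infty\circ\theta_{T_{K_m}}\le n\}$. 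Applying the strong Markov property at $T_{K_m}$ and using that $X_{T_{K_m}}\in B$, where the relevant probability is at most $c$ by the definition of $B$, gives
\begin{align*}
	\P_x\big(I^f_\infty\le n;\, T_{K_m}<\infty\big)
	\le \E_x\big[\1_{(T_{K_m}<\infty)}\,\P_{X_{T_{K_m}}}(I^f_\infty\le n)\big]
	\le c\,\P_x(T_{K_m}<\infty)\le c.
\end{align*}

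Finally I would let $m\to\infty$. As $(K_m)$ increases, $(\{T_{K_m}<\infty\})$ increases, and since $T_{K_m}\downarrow 0$ $\P_x$-almost surely their union has full $\P_x$-measure; hence $\P_x(I^f_\infty\le n;\, T_{K_m}<\infty)\uparrow\P_x(I^f_\infty\le n)$, which is the same monotone-continuity step already used in the proof of Lemma \ref{super finite technical lemma 1}. Letting $m\to\infty$ in the displayed bound gives $\P_x(I^f_\infty\le n)\le c$, that is $x\in B$, and the proof is complete. I do not expect a genuine obstacle here: the computation is that of Lemma \ref{super finite technical lemma 1} with the inequalities reversed, and the only delicate points are measure-theoretic bookkeeping --- that $B$ is (nearly) Borel so that the compact exhaustion of \cite{BG} applies, and that $X_{T_{K_m}}$ lies in $K_m$ and not merely in $K_m^r$ (the latter would be circular, since $B^r\subseteq B$ is precisely what is being proved).
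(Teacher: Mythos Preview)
Your proof is correct and follows essentially the same route as the paper's: bound $\P_x(I^f_\infty\le n)$ by restricting to the event that a compact $K_m\subseteq B$ is hit, apply the strong Markov property there (using $X_{T_{K_m}}\in K_m\subseteq B$), and then pass to the limit along the sequence from Lemma~I.10.19 of \cite{BG}. The only cosmetic difference is that the paper first establishes the inequality $\P_a(I^f_\infty\le n)\le c+\P_a(T_K\ge\zeta)$ for an arbitrary compact $K\subseteq B$ and then takes the infimum, whereas you go directly to the approximating sequence; and the paper obtains $X_{T_K}\in K$ from right-continuity of paths rather than from Lemma~I.11.4 (which gives only $K\cup K^r$, though this equals $K$ for compact $K$), so your citation there is slight overkill but harmless.
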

\begin{proof}
The definition of $B$ is exactly that of the complement in $E$ of some super-finite set for $(X,f)$. Let $K\subseteq B$ be a compact set, so that, by right-continuity of paths, $X_{T_K}\in K$ almost surely. Then for any $a\in E$ we obtain
\begin{align}\label{eqa}
\begin{split}
	\P_a \big( I_{\infty}^f \le n\big)
	& = \P_a\big(I_{\infty}^f \le n ;\, T_{K}<\zeta \big) 
	+ \P_a\big(I_{\infty}^f \le n ;\, T_{K}\ge\zeta \big) \\
	& \le  \P_a\Big(\int_{T_K}^\infty f(X_s)\dd s \le n ;\, T_{K}<\zeta \Big) 
	+ \P_a\big(I_{\infty}^f \le n ;\, T_{K}\ge\zeta \big) \\
	& =  \int \P_y\big(I_{\infty}^f \le n\big) \P_a(X_{T_K}\in \dd y;\, T_{K}<\zeta ) 
	+ \P_a\big(I_{\infty}^f\le n ;\, T_{K}\ge\zeta \big) \\
	& \le c+ \P_a(T_{K}\ge\zeta),
\end{split}
\end{align}
as $K\subseteq B$. Since \eqref{eqa} is true for all compact sets $K\subseteq B$, it then holds for all $a\in E$ that
\begin{align}\label{eqaa}
	\P_a\Big(\int_0^\infty f(X_s)\dd s \le n\Big)\le c + \inf_{K\subseteq B}\P_a(T_{K}\ge\zeta).
\end{align}
We now argue that the second summand of the right-hand side of \eqref{eqaa} equals $0$ if $a$ is regular for $B$. From Lemma I.10.19 of \cite{BG} it follows that there exists an increasing sequence $(K_n)$ of compact subsets of $B$ such that $\P_a(T_{K_n}\downarrow T_B)=1$. Monotonicity of measures implies $\inf_{K\subseteq B}\P_a(T_{K}\ge\zeta)=\lim_{n\to\infty} \P_a(T_{K_n} \ge \zeta) = \P_a(T_B \ge \zeta)=0$.
But then \eqref{eqaa} implies $a\in B$. Thus $B^r\subseteq B$. Equivalence of $B$ being finely closed and containing its regular points is Exercise II.4.9 of \cite{BG}.
\end{proof}

We have built enough structure around super-finite sets to now prove the final proposition of this section, which establishes that super-finite sets are supportive when the process is issued from them.

\begin{proposition}\label{super finite existence}
For $n\in\mathbb N$, $c\in(0,1)$ the super-finite set 
$$
	M_{n,c}=\Big\{y\in E : \P_y\Big(\int_0^\infty f(X_s) \dd s \le n\Big)> c\Big\}
$$
is $\P_z$-supportive if and only if $z\in M_{n,c}$. In particular, if
$
	\P_z(I_\infty^f < \infty)>0
$ 
holds for some fixed $z\in E$ then there exists a super-finite set for $(X,f)$ which is also $\P_z$-supportive.
\end{proposition}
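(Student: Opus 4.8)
The plan is to reduce everything to one application of the strong Markov property at the hitting time of $B := E\setminus M_{n,c}=\{x\in E:\P_x(I^f_\infty\le n)\le c\}$, exploiting that ``$M_{n,c}$ is $\P_z$-supportive'' means precisely ``$B$ is $\P_z$-avoidable'', i.e.\ $\P_z(D_B<\zeta)<1$, equivalently $\P_z(D_B\ge\zeta)>0$. One implication is immediate and I would dispatch it first: if $M_{n,c}$ is $\P_z$-supportive then by Definition~\ref{supportivedef} $\P_z(X_t\in M_{n,c}\text{ for all }t\in[0,\zeta))>0$, hence $\P_z(X_0\in M_{n,c})>0$, and since $\P_z(X_0=z)=1$ by normality this forces $z\in M_{n,c}$.

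For the converse I would assume $z\in M_{n,c}$, so that $z\notin B$ and $\P_z(I^f_\infty\le n)>c$, and begin by recording two structural facts. First, since $X_0=z\notin B$ $\P_z$-a.s., the first entry and first hitting times of $B$ coincide, $D_B=T_B$ $\P_z$-a.s. Second, Lemma~\ref{regular points} says $B$ is finely closed ($B^r\subseteq B$), so the general fact $X_{T_B}\in B\cup B^r$ yields $X_{D_B}\in B$ on $\{D_B<\zeta\}$, $\P_z$-a.s. With these in hand I would split
\[
\P_z(I^f_\infty\le n)=\P_z(I^f_\infty\le n;\,D_B\ge\zeta)+\P_z(I^f_\infty\le n;\,D_B<\zeta),
\]
bound the first term crudely by $\P_z(D_B\ge\zeta)$, and for the second note $I^f_\infty\ge\int_{D_B}^\infty f(X_s)\dd s=I^f_\infty\circ\theta_{D_B}$ on $\{D_B<\zeta\}$, so that the strong Markov property at $D_B$ gives
\[
\P_z(I^f_\infty\le n;\,D_B<\zeta)\le\E_z\!\left[\1_{(D_B<\zeta)}\,\P_{X_{D_B}}(I^f_\infty\le n)\right]\le c\,\P_z(D_B<\zeta)\le c
\]
because $X_{D_B}\in B$ there. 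Adding up forces $\P_z(D_B\ge\zeta)\ge\P_z(I^f_\infty\le n)-c>0$, i.e.\ $B$ is $\P_z$-avoidable and $M_{n,c}$ is $\P_z$-supportive.

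The ``in particular'' clause would then follow by exhaustion: if $\P_z(I^f_\infty<\infty)>0$ then $\P_z(I^f_\infty\le n)>0$ for some $n\in\mathbb N$ by continuity from below, and any $c\in(0,\P_z(I^f_\infty\le n))$ makes $z\in M_{n,c}$, which is then a non-empty super-finite set that is $\P_z$-supportive by what was just proved. The one step I expect to need care is guaranteeing that $X_{D_B}$ genuinely lands in $B$ rather than merely its closure: this is exactly what the identity $D_B=T_B$ (valid because $z\notin B$) and the fine-closedness of $B$ from Lemma~\ref{regular points} are there to provide, and the strong-Markov estimate above collapses without it. Everything else is a short computation.
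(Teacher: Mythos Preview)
Your proof is correct and follows essentially the same approach as the paper's: both hinge on the strong Markov property at the entry time of $B=E\setminus M_{n,c}$ combined with Lemma~\ref{regular points} to ensure $X_{D_B}\in B$. The only cosmetic difference is that the paper argues by contrapositive (assuming $\P_z(D_B<\zeta)=1$ and deducing $h_n(z)\le c$), whereas you keep the term $\P_z(D_B\ge\zeta)$ throughout and obtain the slightly sharper quantitative bound $\P_z(D_B\ge\zeta)\ge \P_z(I^f_\infty\le n)-c$.
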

\begin{proof}
For ease of notation let
$
	h_n(x) = \P_x\big( \int_0^\infty f(X_s)\dd s \le n\big)
$
and define 
$
	B_{n,c} = \{x\in E : h_n(x)\le c\}= E \setminus M_{n,c}
$	
as in Lemma \ref{regular points}. 

Necessity of $z\in M_{n,c}$ for $M_{n,c}$ to be supportive is clear, and we shall prove sufficiency by proving the contrapositive. Let us suppose for the moment that $M_{n,c}$ is not $\P_z$-supportive, that is,
$$
	\P_z(D_{B_{n,c}}<\zeta)=\P_z(D_{E \setminus M_{n,c}}<\zeta)=1.
$$ 
From this we obtain that
\begin{align}\label{eq11a}
\begin{split}
	h_n(z) 
	&= \P_z\Big(\int_0^\infty f(X_s)\dd s \le n;\, D_{B_{n,c}}<\zeta\Big) \\
	&\le \P_z\Big(\int_{D_{B_{n,c}}}^\infty f(X_s)\dd s \le n;\, D_{B_{n,c}}<\zeta\Big) \\
	&=\int_E \P_a\Big(\int_0^\infty f(X_s)\dd s \le n\Big) \P_z(X_{D_{B_{n,c}}}\in\dd a ;\,D_{B_{n,c}}<\zeta).
\end{split}
\end{align}
Due to Lemma \ref{regular points} the regular points of $B_{n,c}$ belong to $B_{n,c}$, and therefore Lemma I.11.4 of \cite{BG} tells us that $\P_y(X_{D_{B_{n,c}}}\in\dd a ;\, D_{B_{n,c}}<\zeta)$ is concentrated on $B_{n,c}$. Then returning to \eqref{eq11a} we can deduce from the definition of $B_{n,c}$ that
\begin{align*}
	h_n(z)
	&\le \int_E \P_a\Big(\int_0^\infty f(X_s)\dd s \le n\Big) \P_z(X_{D_{B_{n,c}}}\in\dd a ;\,D_{B_{n,c}}<\zeta)\le c.
\end{align*}
To recap, we have proved that
$$
	M_{n,c}\text{ is not }\P_z\text{-supportive}
	\quad \Rightarrow\quad 
	h_n(z)\le c,
$$
or, equivalently,
\begin{align}\label{L}
	h_n(z)> c
	\quad \Rightarrow \quad 
	M_{n,c} \text{ is }\P_z\text{-supportive}.
\end{align}
Since $z\in M_{n,c}$ if and only if $h_n(z)>c$, the first claim is proved. For the second claim, note that, by continuity of measures, $\P_z(\int_0^\infty f(X_s)\dd s < \infty)>0$ implies 
$$
	h_{n_0}(z)=\P_z\Big(\int_0^\infty f(X_s)\dd s \le n_0\Big)>0
$$ 
for some $n_0\in\mathbb N$. Hence in this case \eqref{L} implies the existence of some $c_0$ such that $M_{n_0,c_0}$ is $\mathbb P_z$-supportive.
\end{proof}

\textbf{Proof of Theorem \ref{integral test}.}
Path integral proofs typically have a simple and a complicated direction. The simple direction ``$\Leftarrow$" uses the potential expression to deduce finiteness of the expectation. In our setting the argument goes as follows. Suppose that $B$ is a $\P_z$-avoidable set with complement $M = E\setminus B$ satisfying
$$
	\E_z\Big[\int_0^\infty \mathbf 1_M (X_s) f(X_s)\dd s\Big] = \int_M f(x) U(z,\mathrm d x)<\infty.
$$
Then $\P_z ( \int_0^\infty \mathbf 1_M(X_s) f(X_s)\dd s < \infty)=1$, and thus
\begin{align*}
	\P_z \Big( \int_0^\infty f(X_s)\dd s < \infty\Big)
	&\ge \P_z \Big( \int_0^\infty f(X_s)\dd s < \infty ; X_s\in M \,\forall s<\zeta \Big) \\
	&= \P_z \Big( \int_0^\infty \mathbf 1_M(X_s) f(X_s)\dd s < \infty ; X_s\in M \, \forall s<\zeta \Big) \\
	& = \P_z( X_s\in M \text{ for all } s<\zeta) >0.
\end{align*}
This shows the ``$\Leftarrow$" direction of  Theorem \ref{integral test}.

To prove the ``$\Rightarrow$" direction we take the super-finite supportive set from Proposition \ref{super finite existence} and obtain the integral test from Proposition \ref{Getoor++}. 
\qed


\subsection{Almost Sure Case}

Theorem \ref{integral test} was formulated and proved in great generality. In the following sections we derive a couple of corollaries by adding or changing the assumptions. A first variant of our proof technique provides a complete theorem in the probability one setting. The unpleasant supportive sets do not vanish in the integral test $\int_M f(x)U(z,\ddd x)<\infty$, but can be made large. 
\begin{theorem}\label{ASIT0}
Let $X$ be a standard Markov process on state space $E$ and $f:E\to[0,\infty]$ measurable. Then the following are equivalent.
\begin{enumerate}
	\item $ \P_z\big(\int_0^\infty f(X_s) \dd s < \infty\big)=1;$
	\item For every $\eps>0$ there exists a $\P_z$-supportive set $M$ such that
	$
		\int_M f(x) \,U(z,\ddd x) < \infty
	$
	and $X$ stays in $M$ with probability at least $1-\varepsilon$.
\end{enumerate}
\end{theorem}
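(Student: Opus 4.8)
The plan is to reduce to the positive probability case (Theorem \ref{integral test}) by a "restart" argument exploiting that under $\P_z(I^f_\infty < \infty) = 1$ the process never reaches a "bad" region where the path integral would be infinite with positive probability, and then to tune the super-finite set until the escape probability is as small as we like.

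For the direction (ii) $\Rightarrow$ (i): suppose for every $\eps > 0$ there is a $\P_z$-supportive set $M = M_\eps$ with $\int_M f\, U(z,\ddd x) < \infty$ and $\P_z(X_s \in M\ \forall s < \zeta) \ge 1-\eps$. On the event $\{X_s \in M\ \forall s<\zeta\}$ we have $\int_0^\zeta f(X_s)\dd s = \int_0^\zeta \1_M(X_s) f(X_s)\dd s$, and the latter is $\P_z$-a.s. finite since it has finite mean $\int_M f\, U(z,\ddd x)$. Hence $\P_z(I^f_\infty < \infty) \ge 1 - \eps$ for every $\eps > 0$, so $\P_z(I^f_\infty < \infty) = 1$.

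For the direction (i) $\Rightarrow$ (ii), which is the substantive one: fix $\eps > 0$. Since $\P_z(I^f_\infty < \infty) = 1$, continuity of measures gives $n$ with $h_n(z) = \P_z(I^f_\infty \le n) > 1 - \eps$. I would take $c$ slightly below $1-\eps$, say $c = h_n(z) - \delta$ with $\delta$ small, actually more cleanly: pick $n$ large enough that $\P_z(I^f_\infty \le n/2) > 1-\eps$, and set $p = 1-\eps$, so by Proposition \ref{super finite existence} (applied with threshold $n/2$ in the role there, matching Proposition \ref{Getoor++}) the super-finite set $M = M_{n/2,\,1-\eps} = \{y : \P_y(I^f_\infty \le n/2) > 1-\eps\}$ contains $z$ and is therefore $\P_z$-supportive. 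Proposition \ref{Getoor++} then yields $\int_M f(x)\, U(z,\ddd x) \le \frac{n/2}{(1-\eps)^2} < \infty$, which is the integral test. It remains to check that $X$ stays in $M$ with probability at least $1-\eps$. This follows from the supportiveness of $M$ together with a refinement: in fact, on the complement event $\{\exists s<\zeta: X_s \notin M\} = \{D_{E\setminus M} < \zeta\}$, the strong Markov property at $D_{E\setminus M}$ pushes the process to a point $a$ with $\P_a(I^f_\infty \le n/2) \le 1-\eps$; combined with $\P_z(I^f_\infty \le n/2) > 1-\eps$ and the inequality chain as in \eqref{eq11a} (integrating $\P_a(I^f_\infty \le n) $ against the entry distribution, using $\int_{D_{B}}^\infty f(X_s)\dd s \le I^f_\infty$), one gets a bound forcing $\P_z(D_{E\setminus M} < \zeta)$ to be small when $h_n(z)$ is close to $1$. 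Quantitatively: $h_n(z) \le \P_z(D_{E\setminus M} \ge \zeta) + (1-\eps)\,\P_z(D_{E\setminus M}<\zeta) \le 1 - \eps\,\P_z(D_{E\setminus M}<\zeta)$, hence $\P_z(D_{E\setminus M} < \zeta) \le (1-h_n(z))/\eps$, and by choosing $n$ at the outset so that $1 - h_n(z) < \eps^2$ we conclude the escape probability is below $\eps$. One then replaces $\eps$ by a suitably rescaled quantity throughout so the final statement reads with the originally given $\eps$.

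The main obstacle I anticipate is bookkeeping the two roles of $\eps$: the $\eps$ controlling the super-finite threshold $c$ (hence the size of the potential bound in Proposition \ref{Getoor++}) and the $\eps$ controlling the escape probability $\P_z(D_{E\setminus M} < \zeta)$. These are linked — a smaller escape probability demands $h_n(z)$ closer to $1$, which forces larger $n$, which inflates the potential bound — but since we only need finiteness of $\int_M f\, U(z,\ddd x)$, not a uniform bound, there is no real tension; one just picks $n = n(\eps)$ large enough for $1 - h_n(z) < \eps^2$ and accepts the resulting (finite) constant $\tfrac{n/2}{(1-\eps)^2}$. The only genuinely delicate point is making sure the chain of inequalities bounding $h_n(z)$ via the first-entry distribution of $E\setminus M$ is valid, i.e. that this distribution is concentrated on $E\setminus M$, which is exactly Lemma \ref{regular points} (fine-closedness of $B_{n,c}$) combined with Lemma I.11.4 of \cite{BG}, just as in the proof of Proposition \ref{super finite existence}.
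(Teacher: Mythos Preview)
Your proposal is correct and follows essentially the same approach as the paper: build a super-finite set containing $z$, invoke Proposition \ref{Getoor++} for the integral test, and bound the escape probability via the strong Markov property at $D_{E\setminus M}$ using Lemma \ref{regular points}. The one notable difference is your choice of threshold: you take $M=\{y:\P_y(I^f_\infty\le n/2)>1-\eps\}$, whereas the paper takes $M=\{y:\P_y(I^f_\infty\le n)>\eps\}$. With the paper's small threshold, the complement satisfies $\P_a(I^f_\infty\le n)\le\eps$, so the strong Markov inequality directly gives $1-\eps<\P_z(D_{E\setminus M}=\infty)+\eps$, i.e.\ escape probability below $2\eps$, with no rescaling needed; your large threshold forces the $\eps^2$ trick and the bookkeeping you flag at the end. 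Both routes work, but the paper's parameter choice avoids that final rescaling step. (A minor slip: with Proposition \ref{Getoor++} as stated, your bound should read $n/(1-\eps)^2$ rather than $(n/2)/(1-\eps)^2$; this of course does not affect finiteness.)
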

Before going into the proof let us recall again that the theorem can be formulated equivalently using the more familiar notion of avoidable sets, the complements of supportive sets. We stick to the notion of supportive sets as those are constructed in the proof.
\begin{proof}\hfill\\
\textbf{(i)$\Rightarrow $(ii)}:
Assuming (i) it follows that for every $\eps>0$, and in particular any choice of $\eps\in(0,1/2)$, there exists an $n\in\mathbb N$ such that 
\begin{align}\label{eq3.n}
	\P_z\Big(\int_0^\infty f(X_s) \dd s \le n \Big) > 1-\eps.
\end{align}
Fix these $n,\eps$ and define the super-finite set 
$
	M=\big\{y\in E : \P_y\big(\int_0^\infty f(X_s) \dd s \le n\big)> \eps\big\},
$
which by \eqref{eq3.n} contains $z$, and thus by Propsition \ref{super finite existence} is $\P_z$-supportive. Moreover,
\begin{align*}
	&\P_z\Big(\int_0^\infty f(X_s) \dd s \le n\Big) \\
	&\qquad = \P_z\Big(\int_0^\infty f(X_s) \dd s \le n;\; D_{E\setminus M} = \infty\Big) 
	+ \P_z\Big(\int_0^\infty f(X_s) \dd s \le n;\; D_{E\setminus M} < \infty\Big) \\
	&\qquad \le \P_z(D_{E\setminus M} = \infty) + \P_z\Big(\int_{D_{E\setminus M}}^\infty f(X_s) \dd s \le n;\; D_{E\setminus M} < \infty\Big).
	\end{align*}
Applying the strong Markov property at $D_{E\setminus M} $ and recalling from Lemma \ref{regular points} that $E\setminus M$ contains its regular points, which implies that $X_{D_{E\setminus M}}\in E\setminus M$ almost surely, gives the upper bound $\P_z(D_{E\setminus M} = \infty) + \eps$. In combination with \eqref{eq3.n} this yields
$
	1-2\eps<\P_z(D_{E\setminus M} = \infty).
$
It remains only to note via Proposition \ref{Getoor++} that $\int_M f(x) \,U(z,\ddd x) < \infty$.

\textbf{(i)$\Leftarrow $(ii)}: 
Take $\eps>0$ small and let $M$ be such that $\P_z(D_{E\setminus M} < \infty)\le\eps$ and
$$
	\E_z\Big[ \int_0^\infty f(X_s)\1_M(X_s) \dd s \Big]=\int_M f(x) \,U(z,\ddd x) < \infty.
$$
Then $\P_z\big( \int_0^\infty f(X_s)\1_M(X_s) \dd s < \infty \big) = 1$ and, in particular,
\begin{align*}
	\P_z\Big(\int_0^\infty f(X_s) \dd s < \infty\Big)
	& \ge \P_z\Big(\int_0^\infty f(X_s) \dd s < \infty;\; D_{E\setminus M} =\infty\Big) \\
	& = \P_z\Big(\int_0^\infty f(X_s) \1_M(X_s)\dd s < \infty;\; D_{E\setminus M} =\infty\Big) \\
	& =  \P_z(D_{E\setminus M} =\infty)  > 1-\eps.
\end{align*}
This holds for arbitrarily small $\eps>0$, and thus $\P_z(\int_0^\infty f(X_s) \dd s < \infty)=1$.
\end{proof}

Example \ref{example} below shows what the large supportive sets might be, and why they cannot be omitted. In that example $f$ has a pole, and the large supportive sets can be chosen to be all sets that do not contain a small interval around the pole of $f$.

We can immediately derive an interesting consequence of Theorem \ref{ASIT0} in the case that $f$ is locally bounded and $X$ is transient. The main point here is that the local boundedness of $f$ implies that finiteness of path integrals is not affected by the behaviour of $X$ for a finite amount of time. Hence, speaking about sets in which $X$ spends no time (avoidable) or finite time (transient) should be the same. The first result makes this idea precise, and the Theorem following specialises to processes with trivial tail-$\sigma$-algebra (such as L\'evy processes) for which transience of sets are zero-one events. 
\begin{corollary}\label{ASIT0+}
Let $X$ be a transient standard Markov process on $\R^d$, and let $f:\R^d\to[0,\infty)$ be measurable and bounded on compact sets. Then the following are equivalent:
\begin{enumerate}
	\item $\P_z\big(\int_0^\infty f(X_s) \dd s < \infty\big)=1;$
	\item For every $\eps>0$ there exists a set $B\in\mathcal B(\R^d)$ such that
	$
		\int_{\R^d\setminus B} f(x) \,U(z,\ddd x) < \infty
	$
	and $\P_z(L_B < \zeta) \ge 1 - \eps$.
\end{enumerate}
\end{corollary}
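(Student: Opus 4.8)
The plan is to read off both directions from Theorem~\ref{ASIT0}, the only genuinely new ingredient being that local boundedness of $f$ makes the time $X$ spends in $B$ before its last exit contribute only a finite amount to the path integral. Throughout, $\zeta$ denotes the lifetime and we use the convention $f(\Delta)=0$, so that $\int_0^\zeta f(X_s)\dd s=\int_0^\infty f(X_s)\dd s$.

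\textbf{(i)$\Rightarrow$(ii).} Fix $\eps>0$. Theorem~\ref{ASIT0} supplies a $\P_z$-supportive set $M$ with $\int_M f(x)\,U(z,\ddd x)<\infty$ and $\P_z\big(X_t\in M\text{ for all }t\in[0,\zeta)\big)\ge 1-\eps$, and the super-finite set built in its proof lies in $\mathcal E$, so $B:=\R^d\setminus M$ is Borel. Then $\int_{\R^d\setminus B}f\,U(z,\ddd x)=\int_M f\,U(z,\ddd x)<\infty$. Since $B\subseteq\R^d$ and $X$ is at $\Delta$ from time $\zeta$ onward, the event $\{X_t\in M\text{ for all }t<\zeta\}$ coincides with $\{D_B=\infty\}$, on which $L_B=\sup\emptyset=0<\zeta$; hence $\P_z(L_B<\zeta)\ge 1-\eps$, which is (ii).

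\textbf{(ii)$\Rightarrow$(i).} Fix $\eps>0$ and take $B=B_\eps$ as in (ii). The hypothesis $\int_{\R^d\setminus B}f\,U(z,\ddd x)=\E_z\big[\int_0^\zeta\1_{\R^d\setminus B}(X_s)f(X_s)\dd s\big]<\infty$ forces $\int_0^\zeta\1_{\R^d\setminus B}(X_s)f(X_s)\dd s<\infty$ $\P_z$-almost surely. On the event $\{L_B<\zeta\}$ one has $L_B<\infty$, so $(X_s)_{s\in[0,L_B]}$ is a càdlàg $\R^d$-valued path on a compact time interval and therefore has relatively compact range; since $f$ is bounded on the closure of that range and $X_s\notin B$ for every $s>L_B$, we obtain $\int_0^\zeta\1_B(X_s)f(X_s)\dd s=\int_0^{L_B}\1_B(X_s)f(X_s)\dd s<\infty$. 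Adding the two contributions, $\int_0^\zeta f(X_s)\dd s<\infty$ holds on $\{L_B<\zeta\}$ outside a $\P_z$-null set, so $\P_z\big(\int_0^\infty f(X_s)\dd s<\infty\big)\ge\P_z(L_B<\zeta)\ge 1-\eps$; letting $\eps\downarrow 0$ gives (i).

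\textbf{Expected obstacle.} There is no deep difficulty here -- this is an ``immediate'' consequence of Theorem~\ref{ASIT0} -- and the only points needing care are elementary: that a càdlàg function on a compact interval has bounded range (a time at which the path escaped every compact would be a subsequential limit at which neither right-continuity nor the existence of a left limit could hold), and the bookkeeping that $\{L_B<\zeta\}$ forces $L_B<\infty$ regardless of whether the lifetime is finite, so the compact-time-interval argument does apply. Note that transience of $X$ is not actually invoked in the proof; it is the natural standing hypothesis for this family of results and is what makes the condition $\P_z(L_B<\zeta)\ge 1-\eps$ meaningful, connecting the corollary to the tail-triviality refinement that follows it.
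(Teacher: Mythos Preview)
Your proof is correct. The (i)$\Rightarrow$(ii) direction matches the paper's argument essentially verbatim. For (ii)$\Rightarrow$(i), however, you take a genuinely different and more direct route: you bound the path integral pathwise on $\{L_B<\zeta\}$ by splitting into the contribution after $L_B$ (controlled by the integral test on $\R^d\setminus B$) and the contribution on $[0,L_B]$ (finite because a c\`adl\`ag path on a compact time interval has bounded range and $f$ is locally bounded). The paper instead goes back through Theorem~\ref{ASIT0}: it chooses a compact $K$ large enough that $\P_z(L_B<T_{\R^d\setminus K})>1-2\eps$, builds the supportive set $M=K\cup(\R^d\setminus B)$, checks $\int_M f\,U(z,\ddd x)\le \int_{\R^d\setminus B}f\,U(z,\ddd x)+U(z,K)\sup_K f<\infty$, and then invokes Theorem~\ref{ASIT0}. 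Your argument is shorter and, as you correctly note, does not actually use transience; the paper's does, through the bound $U(z,K)<\infty$. The paper's route has the (minor) advantage of staying entirely inside the supportive-set framework, which keeps the logical thread from Theorem~\ref{ASIT0} to the Kolb--Savov refinement uniform.
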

\begin{proof}
For any Borel set $B$ it clearly holds that the last exit time $L_B$ is zero on the event that the first entry time $D_B$ is infinite. Therefore, if we assume (i), Theorem \ref{ASIT0} gives a set $B\coloneqq \R^d\setminus M$ such that $\P_z( L_B < \zeta)\ge 1-\eps$ and $\int_{\R^d\setminus B} f(x) \,U(z,\ddd x) < \infty$. So (ii) is proven.

Now suppose (ii) holds. Proposition I(9.3) of Blumenthal and Getoor \cite{BG} gives that, on the event $\{t < \zeta\}$, the random set $\{X_s : s\in[0,t]\}$, built from the points hit by $X$ up to time $t$, is almost surely bounded. It follows that for a random time $T\in[0,\infty)$ with $T < \zeta$ almost surely, the path of $X$ up to time $T$ is almost surely contained in a (proper) compact subset of $\R^d$. Therefore, if $(K_n)$ is an increasing sequence of compact sets with limit $\R^d$, it holds for fixed $\eps>0$ that $\P_z(\lim_{n\to\infty} T_{\R^d\setminus K_n}\ge (\zeta-\eps)^+) = 1$. Thus continuity of measure yields $\lim_{n\to\infty} T_{\R^d\setminus K_n}\ge \zeta$ $\P_z\text{-almost surely}$. The above argument only works because we have specified that the state space is unbounded. Fix some $\eps>0$, and take $B$ the set satisfying $\P_z(L_B < \zeta) \ge 1 - \eps$ and $\int_{\R^d\setminus B} f(x) \,U(z,\ddd x) < \infty.$ Due to continuity of measure,
$$
	\P_z(L_B \ge T_{\R^d\setminus K_n}) \downarrow \P_z(L_B \ge \zeta) < \eps.
$$
Then we can find a compact set $K\subseteq \R^d$ such that
$
	\P_z(L_B < T_{\R^d\setminus K}) > 1-2\eps.
$
Define the set $M = K \cup (\R^d\setminus B)$. The definition of $M$ implies that on the event $L_B < T_{\R^d\setminus K}$, the process $X$ never leaves $M$. That is,
$$
	\P_z( D_{\R^d\setminus M} = \infty) \ge \P_z( L_B < T_{\R^d\setminus K}) > 1-2\eps
$$
so $X$ stays in $M$ with probability at least $1-2\eps$. In addition
$$
	\int_M f(x) \,U(z,\ddd x) \le \int_{\R^d\setminus B} f(x) \,U(z,\ddd x) + U(z,K)\sup_{x\in K}f(x).
$$
This together with the integral test of (ii), boundedness of $f$ on compact sets, and $U(z,K)<\infty$ implies that
$
	\int_M f(x) \,U(z,\ddd x) < \infty.
$
Our choice of $\eps>0$ was arbitrary, and so the result follows from Theorem \ref{ASIT0}.
\end{proof}

In the case of a trivial tail $\sigma$-Algebra, Corollary \ref{ASIT0+} simplifies and for the special case of a L\'evy process matches exactly the main theorem of Kolb and Savov \cite{KolbSavov}, which we recalled above. For an unkilled Markov process $X$, we say $X$ has a trivial tail $\sigma$-algebra when issued from $z\in E$ if
$$
	A \in \bigcap_{s\ge 0} \sigma\big(X_t,\, t\ge s\big) \quad \Rightarrow \quad \P_z(A) \in\{0,1\}.
$$
As an example, a Lévy process on $\R^d$ has a trivial tail $\sigma$-algebra when issued from every $z\in\R^d$. Since the events $\{L_B < \infty\}$ are in the tail $\sigma$-algebra, the previous theorem can be reformulated using transient sets, i.e. the sets with $\P_z(L_B<\infty)=1$.

\begin{theorem}[Generalised Kolb-Savov Zero-One Law for Perpetual Integrals]\label{ASIT}
Let $X$ be a standard Markov process on $\R^d$ with trivial tail $\sigma$-algebra when issued from $z\in\R^d$, and let $f:\R^d\to [0,\infty)$ be measurable and bounded on compact sets. Suppose in addition that $\P_z(\zeta = \infty)=1$. Then the following are equivalent:
\begin{enumerate}
	\item $\P_z\big( \int_0^\infty f(X_s) \dd s < \infty\big) > 0;$
	\item $\P_z\big( \int_0^\infty f(X_s) \dd s < \infty\big) = 1;$
	\item There exists a transient set $B$ (that is, a Borel set with $\P_z(L_B < \infty) = 1$) such that integral test
	$
		\int_{\R^d\setminus B} f(x) \, U(z, \ddd x) < \infty
	$
	holds. 
\end{enumerate}\end{theorem}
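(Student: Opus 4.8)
The plan is to obtain all the equivalences from the chain (ii)$\Rightarrow$(i), (i)$\Rightarrow$(ii), (ii)$\Rightarrow$(iii), (iii)$\Rightarrow$(ii); the essential input is Theorem~\ref{ASIT0}, applied once, together with two applications of the triviality of the tail $\sigma$-algebra. The implication (ii)$\Rightarrow$(i) is trivial.

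For (i)$\Rightarrow$(ii) I would establish the zero-one law by showing that $\{I_\infty^f<\infty\}$ coincides $\P_z$-almost surely with a genuine tail event. Here the standing assumptions are used crucially: since $\P_z(\zeta=\infty)=1$, for every finite $t$ the path $(X_s)_{s\in[0,t]}$ is a càdlàg path taking values in $\R^d$, hence has relatively compact range, and since $f$ is bounded on compacts this forces $I_t^f<\infty$ $\P_z$-almost surely for every finite $t$. The events $B_n:=\{\int_n^\infty f(X_s)\dd s<\infty\}\in\sigma(X_s,s\ge n)$ increase in $n$, so $\bigcup_n B_n=\bigcup_{n\ge m}B_n\in\sigma(X_s,s\ge m)$ for every $m$, i.e.\ $\bigcup_n B_n$ is a tail event; and $\{I_\infty^f<\infty\}$ differs from $\bigcup_n B_n$ only by the $\P_z$-null event that some $I_n^f$ is infinite. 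Triviality of the tail then yields $\P_z(I_\infty^f<\infty)\in\{0,1\}$, and (i) upgrades this to (ii).

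For (ii)$\Rightarrow$(iii) I would apply Theorem~\ref{ASIT0} with, say, $\eps=\tfrac12$, producing a $\P_z$-supportive set $M$ with $\int_M f(x)\,U(z,\ddd x)<\infty$ and $\P_z(X_t\in M\text{ for all }t\ge0)\ge\tfrac12$ (using $\zeta=\infty$ to read ``stays in $M$'' this way). Set $B:=\R^d\setminus M$. On the event that $X$ never leaves $M$ one has $L_B=0$, so $\P_z(L_B<\infty)\ge\tfrac12>0$. But $\{L_B<\infty\}$ equals the event that $X$ eventually avoids $B$ forever, which lies in $\sigma(X_s,s\ge u)$ for every $u$ and is therefore a tail event; triviality thus upgrades the estimate to $\P_z(L_B<\infty)=1$. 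Hence $B$ is a transient set and $\int_{\R^d\setminus B}f(x)\,U(z,\ddd x)=\int_M f(x)\,U(z,\ddd x)<\infty$, which is exactly (iii).

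Finally, for (iii)$\Rightarrow$(ii): if $B$ is transient with $\int_{\R^d\setminus B}f(x)\,U(z,\ddd x)<\infty$, then the potential identity gives $\E_z[\int_0^\infty\1_{\R^d\setminus B}(X_s)f(X_s)\dd s]<\infty$, so this integral is $\P_z$-a.s.\ finite, and since $X_s\notin B$ for all $s>L_B$ it dominates $\int_{L_B}^\infty f(X_s)\dd s$. On the other hand $L_B<\infty$ $\P_z$-a.s., and on that event the path on $[0,L_B]$ again has relatively compact range, so $\int_0^{L_B}f(X_s)\dd s<\infty$ by boundedness of $f$ on compacts. Summing the two pieces gives $I_\infty^f<\infty$ $\P_z$-a.s., which is (ii). I expect no serious obstacle here: the only point that needs care is the elementary identification of $\{I_\infty^f<\infty\}$ and $\{L_B<\infty\}$ as, up to null sets, tail events — this is exactly where $\P_z(\zeta=\infty)=1$ and the local boundedness of $f$ enter — while the rest is bookkeeping around Theorem~\ref{ASIT0} and the potential formula.
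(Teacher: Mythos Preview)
Your proof is correct and follows the same overall approach as the paper: derive the result from Theorem~\ref{ASIT0} together with triviality of the tail $\sigma$-algebra applied to $\{I^f_\infty<\infty\}$ and $\{L_B<\infty\}$. The only difference is that the paper routes (iii)$\Rightarrow$(ii) through Corollary~\ref{ASIT0+} (building a large supportive set $M=K\cup(\R^d\setminus B)$ and invoking $U(z,K)<\infty$ from transience), whereas you argue directly by splitting the path integral at $L_B$ and bounding $\int_0^{L_B}f(X_s)\dd s$ via local boundedness of $f$; this is a minor but slightly cleaner variation, and in particular does not require the standing transience hypothesis of Corollary~\ref{ASIT0+}.
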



\subsection{Examples for Stable L\'evy Processes}
In this section we give two examples that go beyond the drastic example $f=+\infty \1_{[1,2]}$ from Section \ref{mainthm}. Both examples are for transient symmetric stable processes on $\R$, i.e.\ $\alpha\in (0,1)$. In that setting the potential measure is absolutely continuous with (ignoring constants)
\begin{align*}
	U(z,\dd x)=|z-x|^{\alpha-1}\dd x, \qquad z,x\in\R.
\end{align*}
The first example shows that supportive sets also cannot be omitted in the integral test for finite (or bounded) $f$. Nonetheless, the supportive sets can be chosen large as proved in Theorem \ref{ASIT0}.
\begin{example}\label{example}
Assume $z=0$, fix a point $y\in\R$ not equal zero, and let $\eps>0$ be such that $\abs{y}>\eps$. Suppose that $f$ has support on $\textbf B_\eps(y) = \{x:\abs{x-y}<\eps\}$. Since the density $u(x) = \abs{x}^{\alpha-1}$ of $U(0,\ddd x)$ is bounded on the support of $f$ we find that
$$
	\int_\R f(x) \,U(0,\ddd x) < \infty
	\qquad \Longleftrightarrow \qquad
	 \int_\R f(x) \dd x < \infty.
$$
Hence, if we define such an $f$ with a non-integrable pole at $y$, for instance $f(x) = (x-y)^{-2}\1_{\textbf B_\eps(y)}(x)$, it holds that $\int_\R f(x) \,U(0,\ddd x) =\infty$. Now recall that $X$ stays a positive distance away from the pole $y$ almost surely.\footnote{This well-known fact can be proven using the density of the point of closest reach of $X$, see \cite{deep3}.}  It follows that
$$
	\xi\coloneqq \sup_{t\in[0,\infty)}f(X_t) < \infty \qquad \text{almost surely.}
$$
In addition, $\P_0(L_{\textbf B_\eps(y)} < \infty)=1$. Therefore, the infinite-time path integral is finite almost surely, that is,
$$
	\int_0^\infty f(X_s) \dd s \le \xi L_{\textbf B_\eps(y)} < \infty,
$$
but the integral test fails without a supportive set. The supportive set can be chosen large. Any set not containing a small ball around the pole of $f$ at $y$ is possible. This is exactly the phenomenon that we found in Theorem \ref{ASIT0}.
\end{example}
The next example gives a bounded function $f$ for which the supportive set in the integral test can be given explicitly.
\begin{example}\label{ex}
We construct an avoidable set $A$ with potential $U(z,A)=\infty$ for symmetric stable processes with $\alpha\in (\frac 2 3,1)$. Then the function $f\coloneqq\1_A$ is a further (bounded) counter example for which $\int_0^\infty f(X_s)\dd s$ is finite with positive probability (it is actually finite almost surely as the tail-$\sigma$-Algebra of a L\'evy process is trivial) but $\int f(x)U(z,dx)=\infty$. The set $A$ is a disjoint union of shrinking intervals drifting off to infinity. The sizes of the intervals are shrinking sufficiently fast as to be avoidable, but not so fast as to have finite potential.

For transient isotropic stable processes a result called Wiener's criterion gives an analytic description of sets which are thin at a point, see \cite{BH} Corollary V.4.17 for a classical presentation. That result can be extended to an analytic description of avoidable sets via \cite{MVisotropic} Corollary A.4, and is stated as follows. With so-called shells $S_n:=\{ x\in\R: \lambda^{n-1} <|x-z|\leq\lambda^{n}\}$ for arbitrary fixed $\lambda\in (1,\infty)$,
\begin{align*}
	B \text{ is }\P_0\text{-avoidable}\quad &\Leftrightarrow\quad \sum_{n=-\infty}^{+\infty} \lambda^{n(\alpha-1)} C(B\cap S_n)<\infty,
\end{align*}
where $C(B\cap S_n)$ is the capacity of $B\cap S_n$. The capacity of a stable process can be computed or estimated using results from probabilistic potential theory. 
Define the sets $A, A_1,\dots \in\mathcal B(\R)$ by
$$
	A_n \coloneqq [2^{n}-2^{(n-1)/3}, 2^n) 
	\qquad\text{and}\qquad
	A\coloneqq  \bigcup_{n=1}^\infty A_n.
$$
Because $2^{n}-2^{(n-1)/3} > 2^{n}-2^{(n-1)} = 2^{n-1}$, we can take $\lambda=2$ in the definition of $S_n$ and have that $A_n\subseteq S_n$ for each $n \in \mathbb N$, and in particular that $A \cap S_{n} = A_n$. We now show that $A$ is $\P_0$-avoidable for all $\alpha\in(0,1)$.

Let us use the notation
$
	\textbf B_\eps = \{ x\in\R : \abs{x} < \eps\}
$
for the ball around 0 of radius $\eps$. Notice that each $A \cap S_{n}$ is just a translation of the ball of radius $2^{-1}2^{(n-1)/3} = 2^{(n-4)/3}$.
Then Proposition 42.12 of Sato \cite{Sato} it yields that
$
	C(A \cap S_{n})
	= C(\textbf B_{2^{(n-4)/3}})
	= C({2^{(n-4)/3}}\textbf B_1).
$
From the scaling property $C(aB)=a^{1-\alpha}C(B)$  for capacities of stable processes it follows that
$$
	C(A \cap S_{n}) = 2^{(n-4)(1-\alpha)/3} C(\textbf B_1).
$$
Now the summation test of Wiener with $\lambda=2$ is
\begin{align*}
	\sum_{n=-\infty}^\infty2^{n(\alpha-1)}C(A \cap S_{n})
	& = \sum_{n=1}^\infty 2^{n(\alpha-1)}C (A \cap S_{n}) \\
	& = C (\textbf B_1) \sum_{n=1}^\infty  2^{n(\alpha-1)} 2^{(n-4)(1-\alpha)/3} \\
	& = 2^{4(\alpha-1)/3}C (\textbf B_1) \sum_{n=1}^\infty 2^{(2n/3)(\alpha-1)}.
\end{align*}
Since $C (\textbf B_1)=\frac{\Gamma(\frac{1}{2})}{\Gamma(\frac{\alpha}{2})\Gamma((\frac{1-\alpha}{2})+1)}$ is a finite constant - see Example V.4.16(2) of Bliedtner and Hansen \cite{BH}- the quantity above is finite if and only if the geometric series $\sum_{n=1}^\infty 2^{(2n/3)(\alpha-1)}$ is finite, and this is equivalent to the condition $2^{(2/3)(\alpha-1)}<1$, that is, $\alpha <1$. Hence, $A$ is an avoidable set for all stable processes with $\alpha\in (0,1)$.

With the explicit form of the potential we can compute the potential $U(0,A)$:
\begin{align*}
	U(0,A) = \int_A |x|^{\alpha-1}\dd x
	& = \sum_{n=1}^\infty \int_{2^n-2^{(n-1)/3}}^{2^n} x^{\alpha-1} \dd x \\
	& \ge \sum_{n=1}^\infty 2^{n(\alpha-1)} (2^n - (2^n-2^{(n-1)/3})) \tag{monotonicity of $x^{\alpha-1}$} \\
	& = 2^{-1/3}\sum_{n=1}^\infty 2^{n(\alpha-2/3)}.
\end{align*}
This sum is infinite if $\alpha>2/3$. Combining both computations we found an avoidable set $A$ with infinite potential for symmetric stable processes of index $\alpha\in (\frac 2 3,1)$. Setting $f=\1_A$ we have an example function for which $\int_0^\infty f(X_s)\dd s=0<\infty$ with positive probability and $\int_\R f(x)U(0,dx)=\infty$. Hence, the supportive set $M=\R\backslash A$ in $\int_M f(x)U(0,dx)$ is needed also for bounded $f$. The example of course allows modifications for which $f$ has support on $A$ and $f$ is smooth or $f$ vanishes (slowly enough) at infinity.
\end{example}

\section{Finite Time Path Integral Tests}\label{sec:finiteperpetual}


The results of Section \ref{sec:perpetualintegral} for infinite-time-horizon path integrals will now be used to study finite-time-horizon path integrals simultaneously for all finite times. We profit here from the very general assumptions of Theorem \ref{integral test} which allows us to use path integrals in different setups, such as for killed processes. We derive conditions under which
\begin{align*}
	\int_0^t f(X_s) \dd s < \infty\text{ for every }t< \zeta
\end{align*}
holds with positive probability or probability $1$. These questions were raised and answered for the Brownian motion by Engelbert and Schmidt \cite{ES} in 1981, in the form of the following zero-one law. For $f:\R\to [0,\infty]$ measurable they proved the equivalence of 
\begin{enumerate}
	\item $\P_0\big(\int_0^t f(W_s) \dd s < \infty\text{ for every }t\ge 0 \big) > 0;$
	\item $\P_0\big(\int_0^t f(W_s) \dd s < \infty\text{ for every }t\ge 0 \big) = 1;$
	\item $\int_K f(y) \dd y < \infty$ for all compact $K\subseteq\R$.
\end{enumerate}
The proof is a direct consequence of the occupation time formula $\int_0^t f(B_s)\dd s=\int_\R f(x) L_t(x)\dd x$ and the joint continuity of the local time in both coordinates.
In one dimension the Brownian motion shares many properties with stable processes of index $\alpha\in(1,2)$, and Zanzotto \cite{Zanzotto97} extended Engelbert and Schmidt's equivalences (i)-(iii) to this class. Zanzotto's proof is identical to the Brownian case as the local time for the stable processes is also jointly continuous. With the same proof the equivalence (i)-(iii) holds equally for all L\'evy processes with jointly continuous local time. As in the previous section the same result cannot hold for $\alpha\in (0,1)$ due to the transient nature of those stable processes, which allows regions with large $f$ values to be avoided, so that avoidable (or supportive) sets must occur in the theorems.
The results presented in this section are Engelbert-Schmidt-type results for general transient Markov processes. In Section \ref{TMP} we prove theorems which are close to the Engelbert-Schmidt theorem with the exception of the appearance of supportive sets. In Section \ref{sec:LevyLocal} we use our methods to extend the Engelbert-Schmidt/Zanzotto theorem to L\'evy processes with local time which are not necessarily jointly continuous.


\subsection{Transient Markov Processes}\label{TMP}

For general transient Markov processes the Engelbert-Schmidt zero-one law mentioned above fails, since,
with positive probability, $X$ can avoid regions in which $f$ can be arbitrarily large. This in particular implies that a general result should not be a zero-one law at all. Here we present two theorems that together provide a counterpart to Engelbert and Schmidt's theorem for standard transient Markov process. The first concerns the positive probability case, and the theorem for almost sure finiteness is given below.
\begin{theorem}\label{ES2}
Let $X$ be a strongly transient standard Markov process and $f:E\to[0,\infty]$ measurable. For $z\in E$, the following are equivalent:
\begin{enumerate}
	\item $\P_z\big(\int_0^t f(X_s) \dd s < \infty\text{ for every }t< \zeta\big)>0;$
	\item There exists a $\P_z$-supportive set $M$ such that
	$
		\int_{M \cap K} f(y) \,U(z,\ddd y) < \infty
	$
	for all compact $K\subseteq E$.
\end{enumerate}
\end{theorem}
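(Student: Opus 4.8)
The plan is to reduce both implications to the infinite-horizon Theorem~\ref{integral test}, by trading the finite-time condition in (i) for an ordinary path integral of $f$ against a space-dependent weight. I will fix throughout an exhaustion $G_1\subseteq\overline{G_1}\subseteq G_2\subseteq\overline{G_2}\subseteq\cdots$ of $E$ by relatively compact open sets with $\bigcup_mG_m=E$; local compactness guarantees that every compact subset of $E$ lies in some $\overline{G_m}$. Two standard facts will be used repeatedly. First, by path regularity of a standard process (Proposition I(9.3) of \cite{BG}), on the event $\{t<\zeta\}$ the closure of $\{X_s:s\le t\}$ is a compact subset of $E$. Second, writing $\ell_k:=L_{\overline{G_k}}$ and $\ell_0:=0$, the identity $\{s\ge0:X_s\in E\}=[0,\zeta)$ forces $\sup_k\ell_k=L_E=\zeta$, so that $\ell_k\uparrow\zeta$ $\P_z$-almost surely, while strong transience gives $\P_z(\ell_k<\zeta)=1$ for every $k$.

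For the implication (ii)$\Rightarrow$(i) I would argue directly. Suppose $M$ is $\P_z$-supportive with $\int_{M\cap K}f(y)\,U(z,\ddd y)<\infty$ for all compact $K$. Then each $\int_0^\zeta\1_{M\cap\overline{G_m}}(X_s)f(X_s)\dd s$ has finite $\P_z$-expectation $\int_{M\cap\overline{G_m}}f\,U(z,\ddd y)$, hence is $\P_z$-a.s.\ finite; intersecting these events over $m$ with the positive-probability event $\{X_s\in M$ for all $s<\zeta\}$ yields a positive-probability event on which the following holds: given $t<\zeta$, choose $m$ with $\{X_s:s\le t\}\subseteq\overline{G_m}$ (possible by the two facts above), so that
\[
	\int_0^t f(X_s)\dd s=\int_0^t\1_{M\cap\overline{G_m}}(X_s)f(X_s)\dd s\le\int_0^\zeta\1_{M\cap\overline{G_m}}(X_s)f(X_s)\dd s<\infty.
\]
This establishes (i).

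The substance lies in (i)$\Rightarrow$(ii). Set $\Omega_0=\{\int_0^tf(X_s)\dd s<\infty$ for all $t<\zeta\}$, so $\P_z(\Omega_0)>0$; intersecting with the a.s.\ event $\{\ell_k<\zeta$ for all $k\}$ (strong transience), each $\gamma_k:=\int_0^{\ell_k}f(X_s)\dd s$ is finite on $\Omega_0$, though not uniformly in $k$. I would then truncate: pick $N_k\ge1$, nondecreasing in $k$, with $\P_z(\Omega_0\cap\{\gamma_k>N_k\})\le2^{-k-1}\P_z(\Omega_0)$, so that $\Omega_1:=\Omega_0\cap\bigcap_k\{\gamma_k\le N_k\}$ still has positive probability, and form the measurable weight
\[
	V:=\sum_{k\ge1}v_k\,\1_{\overline{G_k}\setminus\overline{G_{k-1}}},\qquad v_k:=\frac{2^{-k}}{N_k},\quad\overline{G_0}:=\emptyset,
\]
for which $(v_k)_k$ is decreasing, $V\le\tfrac12$, $V\le v_k$ on $E\setminus\overline{G_{k-1}}$, and $\inf_KV>0$ for every nonempty compact $K$. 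On $\Omega_1$, decomposing $[0,\zeta)=\bigcup_{k\ge1}[\ell_{k-1},\ell_k)$ and noting that $X_s\in E\setminus\overline{G_{k-1}}$ for a.e.\ $s\in(\ell_{k-1},\ell_k)$,
\[
	\int_0^\zeta V(X_s)f(X_s)\dd s\le\sum_{k\ge1}v_k\int_0^{\ell_k}f(X_s)\dd s=\sum_{k\ge1}v_k\gamma_k\le\sum_{k\ge1}2^{-k}<\infty.
\]
Hence $\P_z(\int_0^\zeta V(X_s)f(X_s)\dd s<\infty)>0$, and Theorem~\ref{integral test} applied to $g:=Vf$ produces a $\P_z$-avoidable set $B$ with $\int_{E\setminus B}Vf\,U(z,\ddd x)<\infty$. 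Then $M:=E\setminus B$ is $\P_z$-supportive, and for every compact $K$,
\[
	\int_{M\cap K}f\,U(z,\ddd x)\le\frac{1}{\inf_KV}\int_{M\cap K}Vf\,U(z,\ddd x)\le\frac{1}{\inf_KV}\int_{E\setminus B}Vf\,U(z,\ddd x)<\infty,
\]
which is (ii).

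The main obstacle is precisely the construction of $V$. Since (i) provides only positive probability, expectations are not available, so a single deterministic weight must control the random, a.s.-finite but unbounded quantities $\gamma_k$; the truncation producing $\Omega_1$ together with the last-exit decomposition — which confines the path into $E\setminus\overline{G_{k-1}}$, a region where $V$ is already as small as $v_k$ — is exactly what forces the series to converge. Strong transience, rather than mere transience, is indispensable here: it is what gives $\ell_k<\zeta$, hence $\gamma_k<\infty$ on $\Omega_0$; with only $\ell_k<\infty$ at hand this step would break down when $\zeta<\infty$.
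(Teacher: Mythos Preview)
Your proof is correct and takes a genuinely different route from the paper for the direction (i)$\Rightarrow$(ii). The paper does not reduce to Theorem~\ref{integral test} as a black box: instead it introduces the truncated integrands $f_n=f\1_{K_n}$, finds constants $C_n$ with $\P_z\big(\bigcap_n\{I^{f_n}_\infty<C_n\}\big)\ge p$, and then builds the supportive set \emph{directly} as $M=\{y:\P_y(\bigcap_n\{I^{f_n}_\infty<C_n\})>p/2\}$, re-running the arguments of Lemma~\ref{regular points} and Proposition~\ref{super finite existence} for this composite event to verify that $M^c$ is finely closed and $M$ is $\P_z$-supportive, and finally invoking Proposition~\ref{Getoor++} on each $(X,f_n)$ for the integral bound. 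Your approach replaces all of this by a single deterministic spatial weight $V$, chosen via the last-exit decomposition so that $\int_0^\zeta V(X_s)f(X_s)\dd s<\infty$ with positive probability, after which one call to Theorem~\ref{integral test} produces the supportive set and the integral test simultaneously. This is more modular and avoids repeating the finely-closed/supportive machinery; the paper's approach, by contrast, makes the supportive set more explicit (as a level set of a concrete function of the law) and ties it visibly to the super-finite structure used elsewhere. For (ii)$\Rightarrow$(i) the two arguments are essentially the same, the paper using first exit times $T_{E\setminus K_n}\uparrow\zeta$ where you use path compactness directly.
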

\begin{proof}\hfill\\
 \textbf{(i)$\Rightarrow$(ii)}: 
 Let $(K_n, n\ge1)$ be a sequence of increasing compact sets with last exit times $L_{K_n}$ satisfying $\bigcup_{n\ge1} K_n = E$, and define $f_n = f\1_{K_n}$. Since $X$ is strongly transient, it holds that $\P_z(L_{K_n}<\zeta)=1$ for all $n$, and thus
\begin{align}\label{eq4.d}
\begin{split}
	& \P_z\Big(\int_0^t f(X_s) \dd s < \infty\text{ for every }t< \zeta\Big) > 0 \\ 
	\Rightarrow \quad & \P_z\Big(\int_0^{L_{K_n}} f(X_s) \dd s < \infty\text{ for every }n\in\mathbb N\Big) > 0 \\
	\Rightarrow \quad & \exists p>0 \text{ s.t. }  \P_z\Big(\int_0^\infty f_n(X_s) \dd s < \infty\text{ for every }n\in\mathbb N\Big) > p.
\end{split}
\end{align}
It follows that there exists a constant $C_1>0$ such that
$$
	\P_z\Big( \int_0^\infty f_1(X_s) \dd s< C_1,\, \int_0^\infty f_n(X_s) \dd s < \infty\text{ for every }n\ge2\Big) > p.
$$
Repeating this process inductively for every $n$ gives a sequence of positive, non-decreasing $(C_n, n\ge1)$ such that
$
	\P_z\big( \bigcap_{n=1}^k B_n \big) > p
$
for every $k\ge 1$, where $B_n = \{\int_0^\infty f_n(X_s) \dd s < C_n\}$. Now letting $B \coloneqq \bigcap_{n=1}^\infty B_n$ we see from continuity of measures that
\begin{align}\label{eqab}
	\P_z(B) = \lim_{k\to\infty} \P_z\Big( \bigcap_{n=1}^k B_n\Big) \ge p.
\end{align}
This motivates us to define the Borel set
$
	M \coloneqq \{ y \in E : \P_y(B) > p/2\},
$
which \eqref{eqab} tells us contains the point $z$, and the remainder of the proof will be showing that $M$ is $\P_z$-supportive and satisfies the integral test.

We begin by showing that $M$ is $\P_z$-supportive, which requires first showing that $M^c = E \setminus M$ contains its regular points, a property known as being finely closed. We argue in the spirit of the reasoning used in the proof of Lemma \ref{regular points}. Let $y$ be an arbitrary point which is regular for $M^c$, and so satisfies $\P_y(T_{M^c} = 0)=1$. Let $K\subseteq M^c$ be a compact set, so that, by right-continuity of paths, $X_{T_K}\in K$ almost surely. Then
\begin{align*}
	\P_y(B)
	& = \P_y\Big(\int_0^\infty f_1(X_s) \dd s  <C_1, \dots ;\, T_{K}<\zeta \Big) 
	+ \P_y\Big( \int_0^\infty f_1(X_s) \dd s  <C_1, \dots ;\, T_{K}\ge\zeta \Big) \\
	& \le  \P_y\Big(\int_{T_K}^\infty f_1(X_s) \dd s  <C_1, \int_{T_K}^\infty f_1(X_s) \dd s  <C_2, \dots ;\, T_{K}<\zeta \Big) 
	+ \P_y(T_{K}\ge\zeta ) \\
	& =  \int \P_a\Big(\int_0^\infty f_1(X_s) \dd s  <C_1, \dots \Big) \P_y(X_{T_K}\in \dd a;\, T_{K}<\zeta ) 
	+ \P_y(T_{K}\ge\zeta ) \\
	& =  \int \P_a(B) \P_y(X_{T_K}\in \dd a;\, T_{K}<\zeta )
	+ \P_y(T_{K}\ge\zeta )\\
	& \le \frac p 2 + \P_y(T_{K}\ge\zeta)
\end{align*}
since $K\subseteq M^c$. It follows exactly as in the proof of Lemma \ref{regular points} that $\P_y(T_{K}\ge\zeta) = 0$ and therefore that $y\in M^c$. Thus $M^c$ contains its regular points. Now we can show that $M$ is supportive for any point contained in it. Let $x\in E$ be arbitrary and suppose first that $M$ is \emph{not} $\P_x$-supportive, that is, 
$
	\P_x(D_{M^c}<\zeta)=1.
$
From this it follows that 
\begin{align}\label{eq11}
\begin{split}
	\P_x(B)
	&= \P_x\Big(\int_0^\infty f_1(X_s) \dd s  <C_1, \dots;\, D_{M^c}<\zeta\Big) \\
	&\le \P_x\Big(\int_{ D_{M^c}}^\infty f_1(X_s) \dd s  <C_1, \dots;\, D_{M^c}<\zeta\Big) \\
	&=\int_E \P_a(B) \P_x(X_{D_{M^c}}\in\ddd a ;\,D_{M^c}<\zeta).
\end{split}
\end{align}
We demonstrated above that $M^c$ contains its regular points, and it follows from Blumenthal and Getoor Lemma I(11.4) that $\P_x(X_{D_{M^c}}\in\dd a ;\,D_{M^c}<\zeta)$ is concentrated on $M^c$. In particular, it follows from \eqref{eq11} that
$$
	\P_x(B)
	\le \int_E \P_a(B) \P_x(X_{D_{M^c}}\in\ddd a ;\,D_{M^c}<\zeta) 
	\le \frac p 2.
$$
What we have thus proven is that for arbitrary $x\in E$, if $M$ is not $\P_x$-supportive then $\P_x(B) \le \frac p 2$.
Equivalently,
$$
	\P_x(B) > \frac p 2
	\quad \Rightarrow \quad 
	M \text{ is }\P_x\text{-supportive}.
$$
We saw earlier in the proof that our particular choice of $z\in E$ is contained in $M$, that is $\P_z(B) > \frac p 2$, and thus it follows that $M$ is $\P_z$-supportive. Finally, we show that $M$ satisfies the integral test of (ii). It is clear that for any $n\ge1$,
$$
	M \subseteq M_n \coloneqq \{y\in E : \P_y(B_n) > p/2\} = \Big\{y\in E : \P_y\Big(\int_0^\infty f_n(X_s)\dd s < C_n\Big) > p/2\Big\} .
$$
Since $M_n$ is super-finite for $(X,f_n)$, it follows from Proposition \ref{Getoor++} that $\int_{M_n} f_n(x) \,U(y,\ddd x)  < \infty$ for all $y\in E$. We therefore see that
$$
	\int_{M\cap K_n} f(x) \,U(z,\ddd x) \le \int_{M_n} f_n(x) \,U(z,\ddd x) < \infty \quad \text{for all }n\ge1.
$$
Because $K_n\uparrow E$ any compact set will be covered by $K_n$ for some $n\in\N$. Hence, we obtain the integral test
$\int_{M\cap K} f(x) \,U(z,\ddd x) < \infty$ for all compact $K\subseteq E$.

\textbf{(i)$\Leftarrow$(ii)}:
Suppose the existence of a $\P_z$-supportive $M$ such that for all compact $K$,
$$
	\E_z\Big[ \int_0^\infty f(X_s)\1_{M \cap K}(X_s) \dd s \Big] = \int_{M\cap K} f(y) \,U(z, \ddd y) < \infty.
$$
Since $M$ is supportive there exists a positive $c>0$ such that $\P_z(T_{M^c} \ge \zeta) > c$. Let $K_1, K_2, \dots$ be an increasing sequence of compact sets with limit $E$. Then for all $n\in\mathbb N$,
\begin{align*}
	\E_z\Big[ \int_0^\infty f(X_s)\1_{M \cap K_n}(X_s) \dd s \Big] < \infty
	\quad & \Rightarrow \quad \P_z\Big( \int_0^\infty f(X_s)\1_{M \cap K_n}(X_s) \dd s < \infty \Big) = 1 \\
	\quad & \Rightarrow \quad \P_z\Big( \int_0^\infty f(X_s)\1_{K_n}(X_s) \dd s < \infty \Big) > c \\
	\quad & \Rightarrow \quad \P_z\Big( \int_0^{T_{E\setminus K_n}} f(X_s) \dd s < \infty \Big) > c
\end{align*}
so that
\begin{align}\label{eq4.h}
\begin{split}
	c & \le \lim_{n\to\infty} \P_z\Big( \int_0^{T_{E\setminus K_n}} f(X_s) \dd s < \infty \Big)  = \P_z\Big( \int_0^{T_{E\setminus K_n}} f(X_s) \dd s < \infty \text{ for all } n\in\mathbb N \Big).
\end{split}
\end{align}
Since $X$ is strongly transient,  $\lim_{n\to\infty}T_{E\setminus K_n} = \zeta$, and thus
$$
	\P_z\Big( \int_0^t f(X_s) \dd s < \infty \text{ for all } t< \zeta\Big) \ge c > 0.
$$
The proof is complete.
\end{proof}
In general almost nothing is known about the supportive - or equivalently, avoidable - set in the statement of Theorem \ref{ES2}, but under some tight restrictions on $X$, some results exist. For example, in two recent papers Mimica and Vondraček studied unavoidable unions of balls in $\R^d$, for censored stable processes in \cite{MVcensored} and for isotropic Lévy processes satisfying a particular scaling condition - which generalises the scaling of stable processes - in \cite{MVisotropic}. In a similar vein, Grzywny and Kwaśnicki \cite{GK} give an explicit form of the hitting probability of a ball by a unimodal isotropic Lévy process. However a general characterisation of avoidable sets for general Lévy processes remains a challenging problem. The final example of Hawkes seminal paper \cite{Hawkes} gives a nice demonstration of how the problem differs from that of polar sets.
Nevertheless we will later discuss stronger assumptions on $f$, with $X$ a stable process, under which the supportive sets can be omitted entirely from the Theorem \ref{ES2}.


The following theorem addresses the same problem as Theorem \ref{ES2} but in the case that the infinite-time path integral is finite almost surely. As in Theorem \ref{ASIT0}, the supportive sets do not disappear, but they can be made large.
\begin{theorem}\label{ES3}
Let $X$ be a strongly transient standard Markov process and $f:E\to [0,\infty]$ measurable. For $z\in E$, the following are equivalent:
\begin{enumerate}
	\item $\P_z\big(\int_0^t f(X_s) \dd s < \infty\text{ for every }t< \zeta\big)=1;$
	\item For every $\eps\in (0,1)$ there exists a $\P_z$-supportive set $M$ such that
	$
		\int_{M \cap K} f(y) \,U(z,\ddd y) < \infty
	$
	holds for all compact $K\subseteq E$ and $X$ stays in $M$ with probability at least $1-\varepsilon$.	
\end{enumerate}
\end{theorem}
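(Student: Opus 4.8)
The plan is to recognise this as the almost-sure counterpart of Theorem~\ref{ES2} and to graft the ``large supportive set'' device from the proof of Theorem~\ref{ASIT0} onto the killing-at-last-exit-times scheme used for Theorem~\ref{ES2}. Throughout I would fix an increasing sequence $(K_n)_{n\ge1}$ of compact subsets of $E$ with $\bigcup_n K_n=E$, and write $f_n=f\mathbf 1_{K_n}$.

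For the implication (i)$\Rightarrow$(ii), fix $\eps\in(0,1)$, which we may take in $(0,1/2)$ since a supportive set witnessing (ii) for a smaller value of $\eps$ also witnesses it for any larger one, and set $\delta=\eps/2$. Strong transience gives $\P_z(L_{K_n}<\zeta)=1$ for every $n$, and since $\{s:X_s\in K_n\}\subseteq[0,L_{K_n}]$, assumption (i) forces $\P_z\big(\int_0^\infty f_n(X_s)\dd s<\infty\text{ for every }n\big)=1$. By continuity of measure I would choose an increasing sequence of finite constants $(C_n)$ with $\P_z(B_n)>1-\delta 2^{-n}$, where $B_n=\{\int_0^\infty f_n(X_s)\dd s<C_n\}$, so that $B:=\bigcap_{n\ge1}B_n$ has $\P_z(B)\ge1-\delta$, and then define the Borel set $M=\{y\in E:\P_y(B)>\delta\}$, which contains $z$ because $\P_z(B)\ge1-\delta>\delta$. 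The first genuine step is to check, exactly as in Lemma~\ref{regular points}, that $E\setminus M$ contains its regular points: for $y$ regular for $E\setminus M$ and any compact $K\subseteq E\setminus M$, using $\int_{T_K}^\infty f_n\le\int_0^\infty f_n$ on $\{T_K<\zeta\}$, the strong Markov property at $T_K$, and $X_{T_K}\in K$, one gets $\P_y(B)\le\delta+\P_y(T_K\ge\zeta)$, and choosing $K$ from Lemma~I.10.19 of \cite{BG} makes the last term vanish, so $y\in E\setminus M$. With $E\setminus M$ finely closed, Lemma~I.11.4 of \cite{BG} gives $X_{D_{E\setminus M}}\in E\setminus M$ on $\{D_{E\setminus M}<\zeta\}$, and splitting $\P_z(B)$ according to whether $D_{E\setminus M}<\zeta$, together with $\int_{D_{E\setminus M}}^\infty f_n\le\int_0^\infty f_n$ and the strong Markov property, yields $\P_z(B;D_{E\setminus M}<\zeta)\le\delta$; hence
$$
	\P_z\big(X_t\in M\text{ for all }t<\zeta\big)=\P_z(D_{E\setminus M}\ge\zeta)\ge\P_z(B)-\delta\ge1-2\delta=1-\eps,
$$
so in particular $M$ is $\P_z$-supportive. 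For the integral test, note that for $y\in M$ and each $n$ one has $\P_y\big(\int_0^\infty f_n(X_s)\dd s\le N_n/2\big)\ge\P_y(B)>\delta$, where $N_n$ is an even integer with $N_n\ge2C_n$; thus $M$ lies in the super-finite set $\tilde M_n=\{y:\P_y(\int_0^\infty f_n(X_s)\dd s\le N_n/2)>\delta\}$ for $(X,f_n)$, which is non-empty as it contains $z$, and Proposition~\ref{Getoor++} gives $\int_{\tilde M_n}f_n(x)\,U(z,\dd x)\le N_n/\delta^2<\infty$. Since $f=f_n$ on $K_n$ and every compact $K$ lies in some $K_n$, this yields $\int_{M\cap K}f(x)\,U(z,\dd x)\le\int_{\tilde M_n}f_n(x)\,U(z,\dd x)<\infty$, which is (ii).

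For the implication (ii)$\Rightarrow$(i), fix $\eps\in(0,1)$ and let $M$ be the $\P_z$-supportive set provided by (ii), so that $\P_z(D_{E\setminus M}\ge\zeta)\ge1-\eps$ and $\int_{M\cap K_n}f(x)\,U(z,\dd x)<\infty$ for every $n$. The latter gives $\P_z\big(\int_0^\infty f(X_s)\mathbf 1_{M\cap K_n}(X_s)\dd s<\infty\big)=1$, and on the event $\{X_t\in M\ \forall\,t<\zeta\}$ that integral equals $\int_0^\infty f(X_s)\mathbf 1_{K_n}(X_s)\dd s$, which dominates $\int_0^{T_{E\setminus K_n}}f(X_s)\dd s$; hence $\P_z\big(\int_0^{T_{E\setminus K_n}}f(X_s)\dd s<\infty\big)\ge1-\eps$ for all $n$. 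Since $T_{E\setminus K_n}$ increases in $n$ and $T_{E\setminus K_n}\uparrow\zeta$ almost surely by strong transience (as in the proof of Corollary~\ref{ASIT0+}, via Proposition~I(9.3) of \cite{BG}), the events on the left decrease and their intersection is contained in $\{\int_0^t f(X_s)\dd s<\infty\text{ for all }t<\zeta\}$, so $\P_z\big(\int_0^t f(X_s)\dd s<\infty\text{ for all }t<\zeta\big)\ge1-\eps$; letting $\eps\downarrow0$ gives (i).

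Most of this is bookkeeping that transplants the proofs of Theorems~\ref{ES2} and~\ref{ASIT0}. The one point that needs genuine care is the fine-closedness of $E\setminus M$ for the set $M$ built from the countable intersection $B=\bigcap_n B_n$ — this is not literally Lemma~\ref{regular points} but is proved by the same mechanism — together with the assertion $\lim_n T_{E\setminus K_n}=\zeta$, which is precisely the place where strong transience, rather than mere transience, is used.
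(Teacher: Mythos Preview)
Your proof is correct and follows a route that is closely related to, but organisationally different from, the paper's. In the forward direction the paper builds, for each $n$ separately, a genuine super-finite set
\[
	M_n^\eps=\Big\{y:\P_y\Big(\int_0^\infty f_n(X_s)\dd s\le N_n\Big)>2^{-n}\eps\Big\}
\]
with $\P_z(T_{E\setminus M_n^\eps}=\infty)>1-2^{1-n}\eps$, and then sets $M^\eps=\bigcap_n M_n^\eps$; this has the advantage that Lemma~\ref{regular points} and Proposition~\ref{super finite existence} apply verbatim to each $M_n^\eps$, so no fine-closedness argument needs to be redone. You instead transplant the construction from Theorem~\ref{ES2}: a single set $M=\{y:\P_y(B)>\delta\}$ built from the intersection event $B=\bigcap_n B_n$, with the fine-closedness of $E\setminus M$ verified by hand (exactly as in the ES2 proof). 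Both routes land on the same integral test via Proposition~\ref{Getoor++}, and your choice $\delta=\eps/2$ actually hits the target $1-\eps$ on the nose, whereas the paper's version yields $1-2\eps$ and then appeals to arbitrariness of $\eps$. Your restriction to $\eps\in(0,1/2)$ is unnecessary (you only need $1-\delta>\delta$, i.e.\ $\eps<1$), but harmless.

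Two small remarks on precision. First, your citation of Corollary~\ref{ASIT0+} for $T_{E\setminus K_n}\uparrow\zeta$ is slightly off, since that corollary is stated for $\R^d$; the paper simply asserts this limit in the ES2 proof, and the underlying fact (Proposition~I(9.3) of \cite{BG}, giving relative compactness of $\{X_s:s\le t\}$ on $\{t<\zeta\}$) does hold for standard processes on general $E$. Second, the fine-closedness step you flag as ``not literally Lemma~\ref{regular points}'' is indeed the one place where your single-set construction costs something over the paper's: you must rerun that argument for the intersection event $B$, whereas the paper sidesteps this entirely by working with bona fide super-finite sets throughout.
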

\begin{proof}\hfill \\
\textbf{(i)$\Rightarrow$(ii)}: 
Let $K_1 \subseteq K_2 \dots$ be an increasing sequence of compact sets with limit $E$. For $n\in\mathbb N$ let $L_{K_n}$ be the last exit time from $K_n$. Then, since $X$ is strongly transient,
\begin{align}\label{eq4.e}
\begin{split}
	& \P_z\Big(\int_0^t f(X_s) \dd s < \infty\text{ for every }t< \zeta\Big) = 1 \\
	\Rightarrow \quad & \P_z\Big(\int_0^{L_{K_n}} f(X_s) \dd s < \infty\text{ for every }n\in\mathbb N\Big) =1 \\
	\Rightarrow \quad & \P_z\Big(\int_0^{L_{K_n}} f(X_s) \dd s < \infty\Big) =1 \text{ for every }n\in\mathbb N \\
	\Rightarrow \quad & \P_z\Big(\int_0^\infty f(X_s)\1_{K_n}(X_s) \dd s < \infty\Big) =1 \text{ for every }n\in\mathbb N.
\end{split}
\end{align}
Now fix $\eps\in (0,1)$ and $n\in\mathbb N$. We can choose a constant $N_n$ such that 
$$
	\P_z\Big(\int_0^\infty f(X_s)\1_{K_n}(X_s) \dd s \le N_n\Big) > 1-2^{-n}\eps.
$$
Then write $f =f\1_{K_n}$ and define a super-finite set for each $(X,f_n )$,
$$
	M^\eps_n \coloneqq \Big\{ y \in E : \P_y\Big(\int_0^\infty f_n(X_s) \dd s \le N_n\Big) > 2^{-n}\eps \Big\}, \quad n\in\mathbb N.
$$
By Proposition \ref{super finite existence} each $M^\eps_n$ is $\P_z$-supportive. Moreover, using that by Lemma \ref{regular points} $E\setminus M^\eps_n$ contains it's regular points,
\begin{align*}
	 1-2^{-n}\eps
	& < \P_z\Big(\int_0^\infty f_n(X_s) \dd s \le N_n\Big) \\
	& = \P_z\Big(\int_0^\infty f_n(X_s) \dd s \le N_n;\; T_{E\setminus M^\eps_n} < \infty \Big) \\
	& \qquad + \P_z\Big(\int_0^\infty f_n(X_s) \dd s \le N_n;\; T_{E\setminus M^\eps_n} = \infty \Big) \\
	& \le \P_z\Big(\int_{T_{E\setminus M^\eps_n}}^\infty f_n(X_s) \dd s \le N_n;\; T_{E\setminus M_n^\eps} < \infty \Big) + \P_z(T_{E\setminus M_n^\eps} = \infty ) \\
	& =\int_{E\setminus M_n^\eps} \P_a\Big(\int_0^\infty f_n(X_s) \dd s \le N_n\Big) \dd\P_z\Big(X_{T_{E\setminus M_n^\eps}} \in da;\; T_{E\setminus M_n^\eps} < \infty \Big) + \P_z(T_{E\setminus M_n^\eps} = \infty ) \\
	& \le 2^{-n}\eps + \P_z(T_{E\setminus M^\eps_n} = \infty ),
\end{align*}
and therefore not only is each $M_n^\eps$ $\P_z$-supportive, but the probability of remaining in each is bounded away from 0 by
$
	\P_z(T_{E\setminus M^\eps_n} = \infty) > 1-2^{1-n}\eps.
$
Now let $M^\eps \coloneqq \bigcap_n M^\eps_n.$ Then by sub-additivity of measure
\begin{align*}
	\P_z(T_{E\setminus M^\eps} = \infty)
	  \ge 1- \sum_{n=1}^\infty \P_z(T_{E\setminus M^\eps_n} < \infty) 
	  > 1- \eps \sum_{n=1}^\infty 2^{1-n} 
	 = 1- 2\eps.
\end{align*}
Because $K_n\uparrow E$ any compact set will be covered by $K_n$ for some $n\in\N$. Therefore
$$
	\int_{M^\eps\cap K} f(y) \,U(z,\ddd y) \le \int_{M_n^\eps\cap K_n} f(y) \,U(z,\ddd y)= \int_{M_n^\eps} f_n(y) \,U(z,\ddd y).
$$
The righthand side is finite due to Proposition \ref{Getoor++} and the definition of $M_n^\eps$.

\textbf{(i)$\Leftarrow$(ii)}: 
Suppose that for any $\eps>0$ there exists a $\P_z$-supportive set $M^\eps$ so that $X$ stays in $M^\eps$ with probability at least $1-\eps$ and
$
	\int_{M^\eps \cap K} f(y) \,U(z, \ddd y) < \infty
$
for all compact $K\subseteq E$.
Let $K_1 \subseteq K_2, \dots$ be a nested sequence of compact sets with limit $E$.
Then for all $n\in\mathbb N$,
\begin{align*}
	\E_z\Big[ \int_0^\infty f(X_s)\1_{M^\eps \cap K_n}(X_s) \dd s \Big] < \infty
	\quad & \Rightarrow \quad \P_z\Big( \int_0^\infty f(X_s)\1_{M^\eps \cap K_n}(X_s) \dd s < \infty \Big) = 1 \\
	\quad & \Rightarrow \quad \P_z\Big( \int_0^\infty f(X_s)\1_{K_n}(X_s) \dd s < \infty \Big) > 1-\eps \\
	\quad & \Rightarrow \quad \P_z\Big( \int_0^{T_{E\setminus K_n}} f(X_s) \dd s < \infty \Big) > 1-\eps.
\end{align*}
Thus, since $X$ is strongly transient, $\lim_{n\to\infty}T_{E\setminus K_n} = \zeta$ and 
\begin{align}\label{eq4.i}
\begin{split}
	1-\eps & < \lim_{n\to\infty} \P_z\Big( \int_0^{T_{E\setminus K_n}} f(X_s) \dd s < \infty \Big) \\
	& = \P_z\Big( \int_0^{T_{E\setminus K_n}} f(X_s) \dd s < \infty \text{ for all } n\in\mathbb N \Big) \\
	& = \P_z\Big( \int_0^t f(X_s) \dd s < \infty \text{ for all } t< \zeta\Big).
\end{split}
\end{align}
Since $\eps>0$ is arbitrary we see that
$
	\P_z\big( \int_0^t f(X_s) \dd s < \infty \text{ for all } t< \zeta\big) = 1.
$
\end{proof}


\subsection{L\'evy processes with local time}\label{sec:LevyLocal}
One can ask whether, for the special case of L\'evy processes with jointly continuous local time, the additional supportive set compared to the Engelbert-Schmidt \cite{ES} and Zanzotto \cite{Zanzotto97} theorems discussed at the beginning of Section \ref{sec:finiteperpetual} can be removed. Let us quickly recall from Bertoin \cite{Bertoin} some important results on local time for L\'evy processes. If $\Psi$ denotes the characteristic exponent of a L\'evy process $X$, then $X$ has local time if and only if 
\begin{align*}
	\int_{-\infty}^{+\infty} \mathcal R\left(\frac{1}{1+\Psi(\xi)}\right) \dd \xi<\infty,
\end{align*}
see V.1 Theorem 1. A deep result due to Barlow \cite{Barlow} and Barlow and Hawkes \cite{BarlowHawkes} gives a necessary and sufficient condition for the existence of a jointly continuous version of the local time process $(t,x)\mapsto L_t^x$, see Chapter V of \cite{Bertoin}. We now provide a proof of the Engelbert-Schmidt/Zanzotto result without appealing to local time, instead working only with densities of the potential measures of the killed L\'evy process. Our method also works for L\'evy processes for which no jointly continuous version of the local time exists.
\begin{theorem}\label{ES4}
Let $X$ be a Lévy process on $\R$ which has local time, and $f:\R \to [0,\infty]$ be measurable. The following are equivalent:
\begin{enumerate}
	\item\label{ES4i} $\P_0\big(\int_0^t f(X_s) \dd s < \infty\text{ for every }t\ge 0\big)=1;$
	\item	 $\int_{K} f(y) \mathrm d y < \infty$ for all compact $K\subseteq\R$.
\end{enumerate}
\end{theorem}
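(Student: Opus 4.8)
The plan is to reduce both implications to the behaviour of $X$ killed on leaving a large open ball, whose potential measure — thanks to the hypothesis that $X$ has local time — is absolutely continuous with a \emph{bounded} density. Fix $R>0$, write $D_R=(-R,R)$, and let $X^{R}$ be $X$ killed at $T_R:=\inf\{t>0:X_t\notin D_R\}$; this is a standard Markov process with a.s.\ finite lifetime $\zeta^R=T_R$, and I write $U^R(x,\ddd y)=u^R(x,y)\,\ddd y$ for its potential measure. The local-time hypothesis gives that the $q$-resolvent density $u^q$ of $X$ is bounded (Bertoin \cite{Bertoin}, Chapter V), and splitting $[0,T_R)$ into unit intervals, applying the strong Markov property, the exponential decay of $\sup_{x\in D_R}\P_x(T_R>n)$ and the bound $\int_0^1 p_s(0,\cdot)\,\ddd s\le \ee^{q}u^q$ yields that $u^R$ is bounded on $\R\times\R$; a standard Green-function estimate for balls also gives $u^R(x,\cdot)\ge\delta_0>0$ a.e.\ on a fixed neighbourhood of $x$. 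Since every Lévy process other than $X\equiv0$ leaves bounded sets, $T_R<\infty$ a.s.\ and $T_R\uparrow\infty$. For \textbf{(ii)$\Rightarrow$(i)}, if $f$ is locally integrable then for each $R$
\[
\E_0\Big[\int_0^{T_R}f(X_s)\,\ddd s\Big]=\E_0\Big[\int_0^{\zeta^R}(f\1_{D_R})(X^R_s)\,\ddd s\Big]=\int_{D_R}f(x)\,u^R(0,x)\,\ddd x\le\|u^R(0,\cdot)\|_\infty\int_{D_R}f(x)\,\ddd x<\infty,
\]
so $\int_0^{T_R}f(X_s)\,\ddd s<\infty$ $\P_0$-a.s., and letting $R\to\infty$ with $T_R\uparrow\infty$ gives (i).

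For \textbf{(i)$\Rightarrow$(ii)} I argue by contraposition. Assume $\int_Kf(y)\,\ddd y=\infty$ for some compact $K$, and pick $x_0\in K$ with $\int_{B_\delta(x_0)}f(y)\,\ddd y=\infty$ for every $\delta>0$; the aim is to show that (i) fails. Suppose, for contradiction, it holds. First I would upgrade (i) from $z=0$ to a.e.\ starting point: the event $A:=\{\int_0^tf(X_s)\,\ddd s<\infty\text{ for all }t\}$ satisfies $A=\{\int_0^1f(X_s)\,\ddd s<\infty\}\cap\theta_1^{-1}A$, so applying the strong Markov property at time $1$ to $\P_0(A)=1$ forces $\P_{X_1}(A)=1$ $\P_0$-a.s.; excluding the trivial deterministic case $X_t=\gamma t$, a non-degenerate Lévy process with local time has $\mathrm{supp}(X_1)=\R$, so the law of $X_1$ has an a.e.-positive density and $G:=\{y:\P_y(A)=1\}$ is co-null, hence dense. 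Choose $y\in G$ so close to $x_0$ that $B_{2|y-x_0|}(x_0)\subseteq B_\rho(y)$ for a small $\rho$ to be fixed, and pick $R$ with $D_R\supseteq\overline{B_{2\rho}(y)}$.

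Now I would apply Theorem~\ref{ASIT0} to the standard Markov process $X^R$ and the function $g:=f\1_{D_R}$, noting that $\int_0^{\zeta^R}g(X^R_s)\,\ddd s=\int_0^{T_R}f(X_s)\,\ddd s<\infty$ $\P_y$-a.s.\ because $y\in G$ and $T_R<\infty$. This produces, for every $\eps>0$, a $\P_y$-supportive $M_\eps$ with $\int_{M_\eps}g(x)\,U^R(y,\ddd x)<\infty$ and $\P_y(X\text{ enters }M_\eps^{c}\text{ before }T_R)\le\eps$. The key observation is that points near $y$ are hit before leaving $D_R$ with probability close to one: $\inf_{x^\ast\in B_\rho(y)}\P_y(T_{\{x^\ast\}}<T_R)\ge1-\eta(\rho,R)$ with $\eta(\rho,R)\to0$ as $\rho\to0$ and $R\to\infty$, since $X$ hits points (local time) and the point-hitting probabilities tend to $1$ as the target approaches the start. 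Fixing $\rho$ small, $R$ large and $\eps<1/2$ with $1-\eta(\rho,R)>\eps$, no point of $B_\rho(y)$ can lie in $M_\eps^{c}$, i.e.\ $M_\eps\supseteq B_\rho(y)$, and therefore, using the lower bound on $u^R(y,\cdot)$,
\[
\infty>\int_{M_\eps}g(x)\,U^R(y,\ddd x)\ \ge\ \int_{B_\rho(y)}f(x)\,u^R(y,x)\,\ddd x\ \ge\ \delta_0\int_{B_{2|y-x_0|}(x_0)}f(x)\,\ddd x\ =\ \infty,
\]
a contradiction. Hence (i) fails and the equivalence is proved.

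The hard part is the uniform point-hitting estimate $\inf_{x^\ast\in B_\rho(y)}\P_y(T_{\{x^\ast\}}<T_R)\to1$: this is precisely where ``$X$ has local time'' is indispensable, as it makes points non-polar and the hitting kernel $x^\ast\mapsto\P_y(T_{\{x^\ast\}}<\infty)$ (a ratio of the potential density $u^0$, or of its $D_R$-killed analogue when $X$ is recurrent) continuous and equal to $1$ at $x^\ast=y$. The remaining ingredients — that $X^R$ is a standard Markov process so Theorem~\ref{ASIT0} applies, that $u^R$ is bounded and locally bounded below, and the passage from $\P_0$ to $\P_y$ for a.e.\ $y$ via $\mathrm{supp}(X_t)=\R$ — are routine but must be assembled with care, and the deterministic drift $X_t=\gamma t$ should be disposed of separately at the start.
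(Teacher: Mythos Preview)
Your overall strategy coincides with the paper's: kill the process so that Theorem~\ref{ASIT0} applies, and then use the fact that $X$ hits points to force the supportive set $M^\eps$ to swallow the relevant compact. The paper kills at an independent exponential time $\tau^q$ (so the killed potential is exactly $U^q$, whose density $u^q$ is bounded by the local-time hypothesis via Bertoin~II.16, and whose co-density $\hat u^q$ is bounded below on compacts as a lower-semicontinuous positive function), whereas you kill on exit from $D_R$. Both killings are viable in principle, though your claimed properties of $u^R$ --- boundedness, and especially the lower bound ``$u^R(x,\cdot)\ge\delta_0$ a.e.\ near $x$'' --- are asserted rather than proved; with exponential killing they come for free.

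The genuine gap is your detour through moving the starting point from $0$ to some $y\in G$ near the bad point $x_0$. Your justification --- that a non-degenerate L\'evy process with local time has $X_1$ absolutely continuous with an a.e.-positive density, so that $G=\{y:\P_y(A)=1\}$ is co-null and hence dense --- is neither proved nor obviously implied by the local-time hypothesis. Absolute continuity of the one-dimensional marginals does not follow from boundedness of $u^q$, and even full support of $X_1$ (which would suffice for density of $G$) is not immediate. More to the point, the detour is unnecessary. The paper stays at $\P_0$ throughout: for any $x\in K$ one has $\P_0(T_{\{x\}}<\tau^q)=\E_0[e^{-qT_{\{x\}}}]=\hat u^q(x)\,C^q$, and this is bounded below on the compact $K$ by some $c>0$. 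Taking $\eps<c$ in Theorem~\ref{ASIT0} then forces $B^\eps\cap K=\emptyset$, i.e.\ $K\subseteq M^\eps$, and the integral test on $M^\eps$ immediately gives $\int_K f\,u^q<\infty$, hence $\int_K f<\infty$. You can repair your argument in exactly this way with $X^R$ in place of $X^q$: show $\inf_{x\in K}\P_0(T_{\{x\}}<T_R)>0$ for $R$ large (monotone convergence from $\P_0(T_{\{x\}}<\infty)>0$, which is lower-semicontinuous in $x$), choose $\eps$ below this infimum, and conclude $K\subseteq M_\eps$ under $\P_0$ --- no change of base point, no appeal to the distribution of $X_1$.
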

\begin{proof} 
The proof makes use of the killed process $X^q$, that is the L\'evy process killed at an independent exponentially distributed time $\tau^q$ with parameter $q>0$. In the literature such a process is sometimes called a $q$-subprocess, see for example Blumenthal and Getoor \cite{BG} Example III(3.17). The killed process is a standard Markov process on $\R$ with cemetary state $\Delta$, and has transition semigroup
\begin{align}\label{killed semigroup}
	P^q_t(x, A) = \P_x(X_t\in A ; t < \tau^q) = \ee^{-qt}P_t(x, A)
\end{align}
for a Borel set $A$. The potential operator of $X^q$ is $U^q$, the $q$-potential operator of $X$. By Theorem II.5 and Theorem 16 of \cite{Bertoin}, $U^q$ has a bounded density $u^q$. We denote by $\hat{u}^q$ the dual potential density, that is, the potential density of the dual L\'evy process $\hat X=-X$.
Since $\tau^q$ is almost surely finite, any killed L\'evy process is transient. Since $X$ has lifetime $\zeta=\infty$ almost surely, $X^q$ is not strongly transient.

\textbf{(i)$\Rightarrow$(ii)}: 
Fix $q>0$, and let $X^q$ be the killed process above. Since $\tau^q$ has support $(0,\infty)$ our assumption implies that $X^q$ satisfies condition (i) of Theorem \ref{ASIT0}. Fix some $\eps>0$; by Theorem \ref{ASIT0}, there exists a $\P_0$-supportive set $M^\eps$ for $X^q$ such that $\P_0(X_s \in M^\eps \text{ for all } s<\tau^q)=\P_0(X^q_s \in M^\eps \text{ for all } s<\zeta) > 1-\eps$ and
$$
	\int_{M^\eps \cap K} f(y) U^q(0, \mathrm d y) \le \int_{M^\eps} f(y) U^q(0, \mathrm d y) < \infty
$$
for any compact $K\subseteq\R$. Let us use the notation $B^\eps = \R\setminus M^\eps$ and fix a compact $K\subseteq\R$. Then
\begin{align*}
	\P_0(X_s \in B^\eps\cap K \text{ for some } s<\tau^q) 
	& = \P_0(X^q_s \in B^\eps\cap K \text{ for some } s<\zeta) \\
	& \le \P_0(X^q_s \in B^\eps \text{ for some } s<\zeta) 
	 < \eps.
\end{align*}
Now suppose $B^\eps\cap K$ is non-empty, so there exists some $x\in B^\eps\cap K$. With $T_{\{x\}} = \inf\{s>0 : X_s=x\}$ we find that
\begin{align*}
	\P_0(X_s \in B^\eps\cap K \text{ for some } s<\tau^q) \ge \P_0(X_s = x \text{ for some } s<\tau^q) = \E_0[\ee^{-qT_{\{x\}}}] = \hat u^q(x) C^q,
\end{align*}
for a positive constant $C^q$ (using Theorem 43.3 of  \cite{Sato}). Because $X$ hits points, $\hat u^q$ is bounded below on the compact set $K$ (because $u^q$ is lower-semicontinuous as a $q$-excessive function, and point-wise positive). Hence, there is a $c>0$ with
$
	\P_0(X_s \in B^\eps\cap K \text{ for some } s<\tau^q)>c,
$
and this constant $c$ is independent of $\eps$. Now we have shown that $c < \eps$,
but our choice of $\eps$ was arbitrary, and 
the resolution of this apparent contradiction is that in the case $\eps\le c$ there does not exist any point $x\in B^\eps\cap K$, that is, $K\subseteq M^\eps$. Thus for such $\eps$,
$$
	\infty > \int_{M^\eps \cap K} f(y) U^q(0, \mathrm d y) = \int_K f(y) u^q(y) \dd y.
$$
Again because $u^q$ is bounded below on compacts, $u^q$ can be omitted in the integral test and the claim follows. 

\textbf{(i)$\Leftarrow$(ii)}: 
Fix $q>0$. Since $X$ has local time Bertoin \cite{Bertoin} Theorem II.16 gives that $u^q$ is bounded, and thus for arbitrary compact $K$,
$$
	\E_0\Big[ \int_0^\infty f(X^q_s)\1_{K}(X^q_s) \dd s \Big] = \int_K f(y) u^q(y) \dd y \le \sup_{x\in\R} u^q(x) \int_K f(y) \dd y < \infty.
$$
Now $\E_0\big[ \int_0^\infty f(X^q_s)\1_{K}(X^q_s) \dd s \big] < \infty$ implies $\P_0\big( \int_0^\infty f(X^q_s)\1_{K}(X^q_s) \dd s < \infty \big) = 1$ for all compact $K$. Thus by continuity of measure and monotone convergence
$$
	\P_0\Big( \int_0^{\tau^q} f(X_s) \dd s < \infty \Big)=\P_0\Big( \int_0^\infty f(X^q_s) \dd s < \infty \Big) = 1.
$$
Since $\tau^q$ is independent of $X$ and has support $(0,\infty)$, this implies
$$
	\P_0\Big(\int_0^t f(X_s) \dd s < \infty\text{ for every }t< \infty\Big)=1,
$$
and (ii) has been proven.
\end{proof}

\section{Small Time Path Integral Tests}\label{sec:finiteperpetualtwo}
We now come to local versions of the path integral tests. Integral tests for path integrals up to finite random times are known by the work of Engelbert and Schmidt for the Brownian case and Zanzotto for the stable case with $\alpha\in (1,2)$, and take the form of the following equivalent statements:
\begin{enumerate}
\item $\P_z\big(\exists t>0: \int_0^t f(X_s)\dd s < \infty \big) >0;$
\item $\P_z\big(\exists t>0: \int_0^t f(X_s)\dd s < \infty \big) = 1;$
\item There exists an $\eps>0$ such that the integral test $\int_{z-\eps}^{z+\eps} f(y) \dd y < \infty$ holds.
\end{enumerate}
Those authors' arguments, as in the case of finite time-horizon path integrals, rely upon the occupation time formula and the joint continuity of local time. For our study of stable SDEs for $\alpha\in (0,1)$ we need a version for transient L\'evy processes. A more general version for transient Markov processes can be found in the Ph.D thesis Baguley \cite{Samphd}.

\begin{theorem}\label{thin ES}
Let $X$ be a transient Lévy process on $\R^d$ which does not hit points and $f:\R^d\to[0,\infty]$ measurable. For $z\in \R^d$, the following are equivalent:
\begin{enumerate}
	\item $\P_z\big(\exists t>0: \int_0^t f(X_s)\dd s < \infty \big) = 1;$	
	\item There exists a $\P_z$-thin set $B$ such that $\int_{\R^d\setminus B} f(y) \,U(z,\ddd y) < \infty$. 
\end{enumerate}
Recall that a measurable set $B$ is called $\P_z$-thin if $\P_z(T_B>0)=1$.
\end{theorem}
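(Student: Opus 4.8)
Suppose $B$ is $\P_z$-thin and $\int_{\R^d\setminus B}f(y)\,U(z,\ddd y)<\infty$. By the defining identity of the potential operator this reads $\E_z\big[\int_0^\infty\1_{\R^d\setminus B}(X_s)f(X_s)\dd s\big]<\infty$, so $\int_0^\infty\1_{\R^d\setminus B}(X_s)f(X_s)\dd s<\infty$ $\P_z$-a.s. Since $B$ is thin we have $T_B>0$ $\P_z$-a.s., and $X_s\notin B$ for every $s\in(0,T_B)$, whence $\int_0^{T_B}f(X_s)\dd s=\int_0^{T_B}\1_{\R^d\setminus B}(X_s)f(X_s)\dd s<\infty$ a.s.; taking $t=\tfrac12(T_B\wedge1)>0$ gives (i). This direction uses neither transience nor that $X$ avoids points.

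\textbf{The hard direction, (i)$\Rightarrow$(ii).}
Write $\rho:=\inf\{t\ge0:I^f_t=\infty\}$ for the explosion time of the continuous non-decreasing map $t\mapsto I^f_t$. Because $I^f_0=0$ and $I^f$ is non-decreasing, $\{\exists t>0:I^f_t<\infty\}=\{\rho>0\}$, so (i) says exactly that $\rho>0$ $\P_z$-a.s. The plan is to localise near $z$, apply the super-finite set machinery of Section \ref{mainthm} to $X$ itself, and then recognise the resulting set as thin. The crucial localisation step is the claim that \emph{there is an $r>0$ with $\P_z\big(\int_0^\infty\1_{\textbf B_r(z)}(X_s)f(X_s)\dd s<\infty\big)>0$}. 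To prove this, note that on the event $\{L_{\textbf B_r(z)}<\rho\}$ the process leaves the ball $\textbf B_r(z)$ for good strictly before $I^f$ explodes, so $\int_0^\infty\1_{\textbf B_r(z)}(X_s)f(X_s)\dd s\le I^f_{L_{\textbf B_r(z)}}<\infty$; hence it suffices to show $L_{\textbf B_r(z)}\to0$ $\P_z$-a.s. as $r\downarrow0$, because then $\P_z(L_{\textbf B_r(z)}<\rho)\to\P_z(\rho>0)=1$ by bounded convergence and some fixed small $r$ works. I expect the careful proof of $L_{\textbf B_r(z)}\to0$ to be the main obstacle, and this is where both standing hypotheses enter: transience forces $\abs{X_t}\to\infty$, so no sequence $s_n\to\infty$ can have $X_{s_n}\to z$; and because $X$ does not hit points, the probability that $X$ ever re-enters $\textbf B_r(z)$ after a fixed time $\eps$ decreases, as $r\downarrow0$, to the probability that $X$ returns to the single point $z$, which is zero. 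Combining the two, the random set $\{s\ge0:X_s\in\textbf B_r(z)\}$ collapses to $\{0\}$ as $r\downarrow0$, giving $L_{\textbf B_r(z)}\downarrow0$.

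\textbf{Conclusion of the hard direction.}
Fix such an $r$, set $g:=f\cdot\1_{\textbf B_r(z)}$, and choose $n_0\in\N$, $c\in(0,1)$ with $\P_z\big(\int_0^\infty g(X_s)\dd s\le n_0\big)>c$; then $M:=\{y\in\R^d:\P_y(\int_0^\infty g(X_s)\dd s\le n_0)>c\}$ is a non-empty super-finite set for $(X,g)$ containing $z$. Proposition \ref{Getoor++}, applied to $(X,g)$ with the constants matched, yields $\int_M g(x)\,U(z,\ddd x)<\infty$, that is $\int_{M\cap\textbf B_r(z)}f(x)\,U(z,\ddd x)<\infty$. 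By Lemma \ref{regular points} the set $\R^d\setminus M$ contains its regular points and is thus finely closed; since $z\in M$, $z$ is not a regular point of $\R^d\setminus M$, and Blumenthal's zero-one law for the Lévy process $X$ upgrades this to $\P_z(T_{\R^d\setminus M}>0)=1$. Finally put $B:=(\R^d\setminus M)\cup(\R^d\setminus\textbf B_r(z))$, so that $\R^d\setminus B=M\cap\textbf B_r(z)$; then the integral test $\int_{\R^d\setminus B}f(x)\,U(z,\ddd x)<\infty$ holds, and $T_B=T_{\R^d\setminus M}\wedge T_{\R^d\setminus\textbf B_r(z)}>0$ $\P_z$-a.s., the second hitting time being positive by right-continuity of paths because $z\in\textbf B_r(z)$. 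Hence $B$ is $\P_z$-thin, which proves (ii).
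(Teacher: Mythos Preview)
Your proof is correct and follows essentially the same route as the paper. Both directions match: for (ii)$\Rightarrow$(i) you argue directly with $T_B$ where the paper detours through a compact $K\subseteq\R^d\setminus B$ from Blumenthal--Getoor to produce a finite $\tau$ (needed only because they phrase (i) via a finite random time); for (i)$\Rightarrow$(ii) both arguments localise to a small ball via $L_{\textbf B_r(z)}\downarrow 0$, apply the infinite-horizon machinery to $g=f\1_{\textbf B_r(z)}$, and take $B=(\R^d\setminus M)\cup(\R^d\setminus\textbf B_r(z))$. The only cosmetic difference is that the paper invokes Theorem~\ref{integral test} as a black box to obtain a supportive $M$, whereas you unpack it and call Proposition~\ref{Getoor++} and Lemma~\ref{regular points} directly---this is arguably cleaner here, since you only need $\R^d\setminus M$ to be thin, not $M$ to be supportive, and thinness follows immediately from Lemma~\ref{regular points} plus Blumenthal's zero--one law.
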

The reader should keep in mind the following relation of avoidable (or supportive) sets and thin sets. Since an avoidable set is never hit with positive probability the first hitting time is trivially non-zero with positive probability. Since this event obeys a zero-one law (Blumenthal-Getoor zero-one law) the first hitting time is non-zero with probability one. Hence, any $\P_z$-avoidable set is also $\P_z$-thin. The converse is obviously wrong.

\begin{proof}
Note that $(i)$ is equivalent to 
\begin{enumerate}
	\item[$(i)^\prime$] There exists a finite non-zero random time $\tau$ such that $\P_z\big( \int_0^\tau f(X_s)\dd s < \infty \big) = 1$.
\end{enumerate}
We will use $(i)^\prime$ with first and last hitting times to deduce the theorem from the theorems of the previous section.

$\boldsymbol{(i)^\prime\Rightarrow(ii)}$:
%
Since $X$ does not hit points, i. e. $\P_z(L_{\{z\}} = 0)=1$, it follows from quasi-left-continuity of $X$ that the last exit times of balls $\textbf B_\eps(z)$ are arbitrarily small as $\eps\downarrow 0$. In particular, there exists some open set $G$ such that $z\in G$ and 
$
	\P_z(L_G \le \tau)>0.
$
Thus under the assumption of (i'),
\begin{align*}
	\P_z\Big( \int_0^{L_G} f(X_s)\dd s < \infty \Big) 
	& \ge \P_z\Big( \int_0^{L_G} f(X_s)\dd s < \infty ; \tau > L_G\Big) \\
	&\ge \P_z\Big( \int_0^\tau f(X_s)\dd s < \infty ; \tau > L_G \Big)  >0.
\end{align*}
It therefore follows that
\begin{align}
	\P_z\Big( \int_0^\infty f(X_s)\1_G(X_s)\dd s < \infty \Big) >0.
\end{align}
Then Theorem \ref{integral test} yields a $\P_z$-supportive set $M$ such that
$$
	\int_{M\cap G} f(x) \, U(z,\ddd x) < \infty.
$$
Since $M$ is $\P_z$-supportive, the complement of $\R^d\backslash M$ is avoidable so that $\P_z(T_{\R^d\backslash M}>0)\geq \P_z(T_{\R^d\backslash M}=\infty)>0.$ Hence, Blumenthal's zero-one law implies that $\P_z(T_{\R^d\backslash M}>0)=1$, that is, $\R^d \setminus M$ is $\P_z$-thin. Since $G$ is open and contains $z$, $\R^d\setminus G$ is also $\P_z$-thin. The union of finitely many $\P_z$-thin sets is again $\P_z$-thin, and thus we define $B = (\R^d \setminus M) \cup (\R^d\setminus G) = \R^d\setminus (M\cap G)$, which is $\P_z$-thin and satisfies
$$
	\int_{\R^d\setminus B} f(x) \, U(z,\ddd x)=\int_{M\cap G} f(x) \, U(z,\ddd x) < \infty.
$$

$\boldsymbol{(i)^\prime\Leftarrow(ii)}$: 
Let $B$ be the $\P_z$-thin set from (ii). Proposition II(4.3) of Blumenthal and Getoor \cite{BG} says that there is a compact set $K\subseteq \R^d\setminus B$ such that $z\in K$ and $\R^d\setminus K$ is again $\P_z$-thin. The random time we shall define is
$
	\tau = T_{\R^d\setminus K}.
$
Since $\R^d\setminus K$ is $\P_z$-thin, $\tau$ is $\P_z$-almost surely positive. In addition by transience of $X$, $T_{\R^d\setminus K} \le L_K < \infty$ $\P_z$-almost surely, and so $\P_z(0<\tau<\infty) = 1$. We have assumed that
$$
\E_z\Big[ \int_0^\infty f(X_s) \1_{\R^d\setminus B}(X_s) \dd s \Big]=\int_{\R^d\setminus B} f(x) \, U(z,\ddd x) < \infty.
$$
This implies that $\P_z( \int_0^\infty f(X_s)\1_{\R^d\setminus B}(X_s)\dd s < \infty) = 1$. From $K\subseteq \R^d\setminus B$ it follows that $\tau \le T_B$. Therefore
\begin{align*}
	\P_z\Big( \int_0^\tau f(X_s)\dd s < \infty \Big) 
	&\ge \P_z\Big( \int_0^{T_B} f(X_s)\dd s < \infty \Big) \\
	&\ge \P_z\Big( \int_0^\infty f(X_s)\1_{\R^d\setminus B}(X_s)\dd s < \infty \Big) = 1.
\end{align*}
\end{proof}

\subsection{The Case of Stable L\'evy Processes}

It is generally impossible to remove the thin sets from Theorem \ref{thin ES}. The following theorem is a version of Theorem \ref{thin ES} in a particular situation in which the thin set can be removed by capacity comparisons. Its proof relies on a remarkably precise analytic description of $\P_z$-thin sets for stable processes. The theorem will be applied later with $f=\sigma^{-\alpha}$ to study the SDE \eqref{eq1}, for instance with $\sigma(x)=|x|^\beta$. 
\begin{theorem}\label{ES stable}
Let $X$ be a symmetric stable process on $\R$ with index $\alpha\in(0,1)$, and let $f:\R\to [0,+\infty]$ be measurable. Suppose that $f$ has an isolated monotone pole at $z$, in the sense that there exists $\delta>0$ such that $f$ is monotone increasing on $(z-\delta,z)$, monotone decreasing on $(z,z+\delta)$. Then the following are equivalent:
\begin{enumerate}
	\item $\P_z\big(\exists t>0: \int_0^t f(X_s)\dd s < \infty \big) = 1;$	
	\item There exists an $\eps>0$ such that $ \int_{z-\eps}^{z+\eps} f(y) \abs{z-y}^{\alpha-1}\dd y < \infty$. 
\end{enumerate}
\end{theorem}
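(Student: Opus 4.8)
\emph{Proof plan.} A symmetric $\alpha$-stable process with $\alpha\in(0,1)$ is transient and does not hit points, so Theorem \ref{thin ES} applies: statement (i) is equivalent to the existence of a $\P_z$-thin Borel set $B$ with $\int_{\R\setminus B}f(y)\,U(z,\ddd y)<\infty$, and since $U(z,\ddd y)$ has density (a constant times) $|z-y|^{\alpha-1}$, this reads $\int_{\R\setminus B}f(y)|z-y|^{\alpha-1}\,\ddd y<\infty$. Given this, the implication (ii)$\Rightarrow$(i) is immediate: for the $\eps$ in (ii) the set $B=\R\setminus(z-\eps,z+\eps)$ is $\P_z$-thin, because its complement is an open neighbourhood of $z$ and $X$ has right-continuous paths, and $\int_{\R\setminus B}f(y)|z-y|^{\alpha-1}\ddd y$ is precisely the finite integral of (ii). So the real content is (i)$\Rightarrow$(ii), and this is where the monotone-pole hypothesis is needed.

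For (i)$\Rightarrow$(ii) I would fix a $\P_z$-thin set $B$ with $\int_{\R\setminus B}f(y)|z-y|^{\alpha-1}\ddd y<\infty$. It suffices to bound $\int_{z-\eps}^{z}f(y)|z-y|^{\alpha-1}\ddd y$ for some $\eps>0$, the right half being symmetric. Substituting $r=z-y$, the function $g(r):=f(z-r)$ is non-increasing on $(0,\delta)$, and on $A:=\{r\in(0,\delta):z-r\notin B\}$ one has $\int_A g(r)r^{\alpha-1}\ddd r\le\int_{\R\setminus B}f(y)|z-y|^{\alpha-1}\ddd y<\infty$. Fix $\lambda>1$, write $I_n=(\lambda^{n-1},\lambda^{n}]$ and $a_n:=g(\lambda^n)\lambda^{n\alpha}$. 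Because $g$ and $r\mapsto r^{\alpha-1}$ are both non-increasing, elementary shell estimates give $\int_{0}^{\lambda^{N}}g(r)r^{\alpha-1}\ddd r\asymp\sum_{n\le N}a_n$ on the one hand, and the lower bound $\int_{A\cap I_n}g(r)r^{\alpha-1}\ddd r\ge c_\lambda\,a_n\,|A\cap I_n|/|I_n|$ with $c_\lambda=(\lambda-1)/\lambda$ on the other. If I can show $|A\cap I_n|\ge\tfrac12|I_n|$ for all sufficiently negative $n$, say $n\le n_1$, then summing gives $\sum_{n\le n_1}a_n\lesssim\int_A g(r)r^{\alpha-1}\ddd r<\infty$, hence $\int_{0}^{\lambda^{n_1}}g(r)r^{\alpha-1}\ddd r<\infty$; taking $\eps=\min\{\lambda^{n_1},\lambda^{n_1'}\}$ with $n_1'$ the analogous right-hand threshold finishes the proof.

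Thus everything comes down to controlling the Lebesgue density of $B$ in the shells $S_n=\{x:\lambda^{n-1}<|x-z|\le\lambda^n\}$ as $n\to-\infty$. Here I would invoke Wiener's test for thinness at $z$ for transient isotropic stable processes (the analytic description of thin sets recalled around Example \ref{ex}, using only the shells accumulating at $z$): since $B$ is $\P_z$-thin, $\sum_{n\le 0}\lambda^{n(\alpha-1)}C(B\cap S_n)<\infty$, so in particular $\lambda^{n(\alpha-1)}C(B\cap S_n)\to 0$. Combining this with the capacity--measure inequality $C(E)\ge c\,|E|^{1-\alpha}$ --- which is scale invariant thanks to $C(aE)=a^{1-\alpha}C(E)$ --- yields $|B\cap S_n|/|S_n|\to 0$, and since $I_n$ is, in the $r$-variable, one of the two halves of $S_n$, this gives $|A\cap I_n|\ge\tfrac12|I_n|$ for all small enough $n$, as required.

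The main obstacle is exactly this last passage, from ``$z$ is irregular for $B$'' to ``$B$ occupies a vanishing fraction of the shrinking shells''. Thinness per se only controls capacities, not Lebesgue measure, so Wiener's criterion together with the capacity--measure comparison is genuinely required; once the density statement is available, the remainder is a routine dyadic computation that exploits the monotonicity of $f$ near the pole to replace $f$ on the part of a shell lying in $\R\setminus B$ by a one-sided bound over the whole shell.
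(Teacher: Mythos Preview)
Your proposal is correct and follows essentially the same route as the paper: reduce via Theorem \ref{thin ES} to a $\P_z$-thin set $B$ with finite weighted integral on $\R\setminus B$, apply Wiener's criterion together with the isoperimetric capacity--measure bound $C(E)\ge c\,\lambda(E)^{1-\alpha}$ (in the paper this is obtained from Betsakos' isoperimetric inequality, not merely from scaling) to conclude that $B$ occupies a vanishing Lebesgue fraction of the shrinking shells, and then use the monotonicity of $f$ near $z$ in a dyadic shell computation to pass from $\int_{\R\setminus B}$ to a full $\int_{z-\eps}^{z+\eps}$. The only cosmetic difference is in the bookkeeping of the monotone step: the paper works with $g(y)=f(y)|y|^{\alpha-1}$ and compares $\sup_{S_n^+}g$ to $\inf_{S_{n+1}^+}g$ to bound $\int_{B\cap S_n}g$ by a constant times $\int_{B^c\cap S_{n+1}}g$, whereas you encode the same comparison through the sequence $a_n=g(\lambda^n)\lambda^{n\alpha}$ and the two-sided shell estimate $\int_{I_n}g(r)r^{\alpha-1}\dd r\asymp a_n$.
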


Theorem \ref{ES stable} above can be directly extended to `almost monotone' measurable functions $g$, in the sense that there exists a $C<\infty$ such that for all $\abs{x-z}<\delta$, $\abs{g(x)-f(x)}\le C$ for some measurable $f$ which has an isolated monotone pole at $z$, by virtue of the fact that in this case, for $\eps<\delta$,
$$
	\left| \int_{z-\eps}^{z+\eps} g(y) \abs{y}^{\alpha-1}\dd y - \int_{z-\eps}^{z+\eps} f(y) \abs{y}^{\alpha-1} \dd y\right| \le \frac{2C\eps^\alpha} \alpha \quad \text{ and }\quad
	\left| \int_0^t g(X_s) \dd s - \int_0^t f(X_s) \dd y\right|\le Ct.
$$
\begin{proof} 
Since $X$ is a Lévy process we can without loss of generality set $z=0$. Recall that $U(0,\ddd y) = \abs{y}^{\alpha-1} \dd y$, up to a constant factor which we freely ignore. The implication (ii)$\Rightarrow$(i) follows from Theorem \ref{thin ES} and the fact that $\R\setminus(-\eps,\eps)$ is $\P_0$-thin for the stable process.

Now suppose (i). Again from Theorem \ref{thin ES} there exists a $\P_0$-thin set $B$ such that 
$$
	\int_{\R\setminus B} f(y) \,U(0,\ddd y) = \int_{\R\setminus B} f(y) \abs{y}^{\alpha-1} \dd y< \infty.
$$
Take $\eps\in(0,\delta)$ and let the map $g$ be defined by $g(y) = f(y) \abs{y}^{\alpha-1}\1_{(-\eps,\eps)}(y)$. It is immediately seen that $g$ shares the same monotonicity property as $f$. The intuition to have in mind is that  the monotone nature of $g$ will allow its behaviour on $B$ to be determined by its behaviour on $\R\setminus B$.

According to Wiener's Criterion for thin sets of stable processes (see for instance Corollary V.4.17 of Bliedtner and Hansen \cite{BH}), the $\P_0$-thin set $B$ satisfies
\begin{align}\label{eq4.a}
	\sum_{k=1}^\infty 2^{k(1-\alpha)}C(B\cap S_k) < \infty,
\end{align}
where $C(B\cap S_k)$ is the capacity of $B\cap S_k$ and $S_k = \{x\in\R : 2^{-(k+1)} < \abs{x} \le 2^{-k} \}$ defines a sequence of decreasing shells of Lebesgue measure $2(2^{-k}-2^{-(k+1)}) = 2^{-k}$. The isoperimetric inequality of Betsakos \cite{Betsakos} Theorem 1 states that the capacity of any compact set is greater or equal that of the ball of the same Lebesgue measure. Example 42.17 of Sato \cite{Sato} then gives that for any compact $K$,
$$
	C(K) \ge C(\textbf B_{\frac 1 2\lambda(K)}) 
	= C\Big(\frac {\lambda(K)} 2 \textbf B_1\Big) 
	= \Big(\frac {\lambda(K)} 2\Big)^{1-\alpha} C(\textbf B_1)
	= C_0\lambda(K)^{1-\alpha},
$$
where $\textbf B_r$ is the ball about $0$ of radius $r$ and $C_0\coloneqq C(\textbf B_1)2^{\alpha-1}$. We can extend this result to a wider class of Borel sets as follows. Let $G\subset \R$ be a bounded open set. Then there exists an increasing sequence of compact sets $K_n\subseteq G$ such that $G = \bigcup_{n=1}^\infty K_n$, and thus from Sato \cite{Sato} Propositions 42.10 and 42.12 it follows that $C(K_n)\uparrow C(G)$. The isoperimetric inequality applied to each $K_n$ then yields that
$$
	C(G) = \lim_{n\to\infty}C(K_n) \ge \lim_{n\to\infty}C_0\lambda(K_n)^{1-\alpha} = C_0\lambda(G)^{1-\alpha} .
$$
Now for a general bounded Borel set $A\subset \R$,
\begin{align*}
	C(A) 
	& = \inf\{C(G) : \text{$G$ open and $A\subseteq G$}\} \\
	& \ge C_0 \inf\{\lambda(G)^{1-\alpha}  : \text{$G$ open, bounded and $A\subseteq G$}\}
	= C_0 \lambda(A)^{1-\alpha}.
\end{align*}
The first equality is due to $C$ being a Choquet capacity, and details can be found in I(10.5) of \cite{BG} and the discussion following it. This in particular holds for the bounded sets $B\cap S_k$, and so from \eqref{eq4.a} we can deduce that
$$
	\sum_{k=1}^\infty 2^{k(1-\alpha)}C_0\lambda(B\cap S_k)^{1-\alpha} 
	= C_0 \sum_{k=1}^\infty (2^k \lambda(B\cap S_k))^{1-\alpha} 
	< \infty.
$$
Since $C_0= C(\textbf B_1)2^{\alpha-1}<\infty$ and $ \lambda(S_k) = 2^{-k}$ this implies 
\begin{align*}
	\sum_{k=1}^\infty \Big(\frac {\lambda(B \cap S_k) }{ \lambda(S_k) }\Big)^{1-\alpha}< \infty.
\end{align*}
From this convergent sum it follows that for any fixed $c\in(0,1)$ there exists $N\in\mathbb N$ such that for all $n\ge N$, $\lambda(B \cap S_n)\le c\lambda(S_n) = c2^{-n}$, and therefore that 
\begin{align}\label{eq4.1c}
	\lambda(B^c\cap S_n)\ge (1-c)\lambda(S_n) = (1-c)2^{-n}.
\end{align}
We will use this relationship to bound the integral of $g$ over $B$.

For $n\geq N$ we shall now consider the two pieces of $S_n$ separately, using notation $S_n^+ = S_n \cap (0,\infty)$, $S_n^- = S_n \cap (-\infty,0)$. Taking advantage of the monotonicity of $g$, with the notation $\bar g_n = \sup_{S^+_n} g$ and $\underline g_n = \inf_{S^+_n} g$, we see for $n\ge N$ that
\begin{align*}
	\int_{B \cap S^+_n} g(x) \dd x
	& \le \bar g_n \lambda(B \cap S^+_n)
	\le \bar g_n c\lambda(S^+_n)
	= \bar g_n 2c\lambda(S^+_{n+1}).
\intertext{Using \eqref{eq4.1c} and the fact that $\bar g_n \le \underline g_{n+1}$, we continue the chain of inequalities as}
	& \le \frac{2c}{1-c} \; \underline g_{n+1}  \lambda(B^c\cap S^+_{n+1})
	\le \frac{2c}{1-c}\int_{B^c\cap S^+_{n+1}} g(x) \dd x.
\end{align*}
Exactly the same procedure works for $S^-_n$, and adding the two pieces gives
$$
	\int_{B \cap S_n} g(x) \dd x \le \frac {2c}{1-c} \int_{B^c\cap S_{n+1}} g(x) \dd x.
$$
Summing over $n\ge N$ tells us that
$$
	\int_{B\cap \textbf B_{2^{-N}}} g(x) \dd x \le \frac {2c}{1-c} \int_{B^c} g(x) \dd x < \infty.
$$
Let $\tilde \eps = \eps \wedge 2^{-N}$. Summing the integrals over $B$ and $B^c$ then yields $\int_{-\tilde \eps}^{\tilde\eps} f(x) \abs{y}^{\alpha - 1} \dd x < \infty$.
\end{proof}

\section{Stochastic Differential Equations}\label{sec:SDE}
In this final section we translate the foregoing results on different path integrals into results for stable stochastic differential equations. The (driftless) \emph{stable SDE equation} with issuing point $z\in\R$ is defined to be the equation
\begin{align}\label{SDEeq}
	\dd Z_t = \sigma (Z_{t-}) \dd X_t, \qquad Z_0 = z.
\end{align}
More precisely, let $X$ be a symmetric stable process on $\R$ and probability space $\mathscr P = (\Omega,\mathcal F,\P)$, and $Z$ an $\R$-valued stochastic process on the same probability space satisfying $\P(Z_0=z)=1$ for some $z\in\R$. Let $\sigma:\R\to[0,\infty)$ be a measurable function, extended as is usual via $\sigma(\Delta)=0$. For $A\subseteq \R$ the collection $(X,Z,\mathscr P)$ is called a \emph{weak solution} to \eqref{SDEeq} on the measurable set $A$ if 
\begin{align}\label{SDE}
	Z_t - z = \int_0^t \sigma (Z_{s-}) \dd X_s \qquad \text{for all } t< T_{A^c}
\end{align}
and $T_{A^c}>0$. The latter condition only excludes trivial cases. If $A=\R$ then $(X,Z,\mathscr P)$ is called a \emph{global weak solution}, and if $A\subsetneq \R$ then $(X,Z,\mathscr P)$ is called a \emph{local weak solution}. We say a solution is trivial if it is constant.

\subsection{Existence and Uniqueness of (Local) Solutions}
Following the classical Engelbert-Schmidt approach to Brownian SDEs, in a sequence of articles Zanzotto succeeded in reformulating weak solutions of $\dd Z_t=\sigma(Z_{t-})\dd X_t$ into time-changes of the driving process with the path integrals $\int_0^t \sigma^{-\alpha}(X_s)\dd s$. Using quadratic variations this is straightforward for the Brownian motion (see for instance Chapter 5 of Karatzas and Shreve \cite{KS}), but the arguments for the stable case are more involved. The following reciprocal connection of SDE solutions and time-change is a combination of three of Zanzotto's results (Lemma 2.26 of \cite{Zanzotto97}, Theorem 2 of \cite{Zanzotto98}, and Theorem 2.2 of \cite{Zanzotto02}) and a time-change due to Kallenberg (Theorem 4.1 of \cite{Kallenberg}).
\begin{theorem}[Zanzotto/Kallenberg time-change]\label{Z time change}\hfill\\
\textbf{(i)} Let $X$ be a symmetric stable process of index $\alpha\in(0,2]$ on the probability space $(\Omega, \mathcal F, \P_z)$, and let $\sigma:\R\to [0,\infty)$ be measurable. Define
$$ 
	I_t = \int_0^t \sigma(X_s)^{-\alpha} \dd s, \qquad  
	\varphi_t = \inf\Big\{ s > 0 : \int_0^s \sigma(X_u)^{-\alpha} \dd u > t \Big\}, \qquad t\ge0,
$$
and let $X_\varphi$ be the time-changed process with the definitions from \eqref{time change} and below. Then there exists a symmetric stable process $Y$ of index $\alpha$ on an extension $\overline{\mathscr P} = (\overline\Omega, \overline{\mathcal F}, \overline\P)$ of $(\Omega, \mathcal F, \P_z)$ such that
\begin{align}\label{SDEeq1}
\begin{split}
	& X_{\varphi_t} - z = \int_0^t \sigma(X_{\varphi_s}) 
	\dd Y_s, \qquad t \in [0, \bar I).
\end{split}
\end{align}
\textbf{(ii)} Let $(X,Z,\mathscr P)$ be a global weak solution of \eqref{SDEeq} with initial condition $z$.
Then there exists a symmetric stable process $Y$ of index $\alpha\in (0,2]$ on an extension $\overline{\mathscr P} = (\overline\Omega, \overline{\mathcal F}, \overline\P)$ of $(\Omega, \mathcal F, \P)$ such that
\begin{align}\label{eq5.5}
	Z_t = Y_{\tilde I_t} \qquad \text{for all $t\in[0,\infty)$ $\overline\P$-a.s.},
\end{align}
where
$
	\tilde I_t = \int_0^t \sigma(Z_{s})^{\alpha} \dd s.
$
\end{theorem}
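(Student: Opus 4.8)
\emph{Approach.} This theorem is assembled from the quoted results rather than proved ex nihilo; the plan is to identify which reference supplies which half of each statement and to check the hypotheses needed to paste the pieces together. The common mechanism is Kallenberg's predictable-transformation theorem (Theorem 4.1 of \cite{Kallenberg}), which, for a symmetric $\alpha$-stable driver, converts in both directions between a stochastic integral $\int_0^\cdot V_s\,\dd \Lambda_s$ of a predictable $V\ge 0$ with $\int_0^t V_s^\alpha\,\dd s<\infty$ against a symmetric $\alpha$-stable $\Lambda$, and a time change of a symmetric $\alpha$-stable process by the right-continuous inverse of $s\mapsto\int_0^s V_u^\alpha\,\dd u$, the identification requiring in general an extension of the probability space. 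Zanzotto's Lemma 2.26 of \cite{Zanzotto97}, Theorem 2 of \cite{Zanzotto98} and Theorem 2.2 of \cite{Zanzotto02} supply the SDE-specific input: they reduce \eqref{SDEeq} to the time-change setting (identifying a weak solution with a time change of the driver and conversely), provide the path regularity of the time-changed process, and establish the analogues of (i)--(ii) for $\alpha\in(1,2]$ that we extend here to $\alpha\in(0,1)$.

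\emph{Part (i).} First record the structure of $I$ and $\varphi$. Since $\sigma:\R\to[0,\infty)$ is finite, $\sigma(X_u)^{-\alpha}\in(0,\infty]$, so $I_t=\int_0^t\sigma(X_s)^{-\alpha}\,\dd s$ is continuous and strictly increasing on the interval where it is finite, $\varphi$ is its continuous strictly increasing inverse on $[0,\bar I)$, and $\varphi_0=\inf\{s>0:I_s>0\}=0$, whence $X_{\varphi_0}=X_0=z$. The change of variables $s=I_u$ gives, for $t<\bar I$, $\int_0^t\sigma(X_{\varphi_s})^\alpha\,\dd s=\int_0^{\varphi_t}\sigma(X_u)^\alpha\sigma(X_u)^{-\alpha}\,\dd u=\varphi_t<\infty$, which is exactly the integrability Kallenberg's theorem requires of the candidate integrand $V_s=\sigma(X_{\varphi_s})$. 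Since $t\mapsto X_{\varphi_t}$ on $[0,\bar I)$ is the time change of the symmetric $\alpha$-stable $X$ by $\varphi$, and $\varphi$ is absolutely continuous with Radon--Nikodym derivative $t\mapsto\sigma(X_{\varphi_t})^\alpha$, applying Theorem 4.1 of \cite{Kallenberg} in the direction that turns a time-changed stable process into a stochastic integral yields, on an extension $\overline{\mathscr P}$, a symmetric $\alpha$-stable $Y$ with $X_{\varphi_t}-z=\int_0^t\sigma(X_{\varphi_s})\,\dd Y_s$ for $t\in[0,\bar I)$; this is \eqref{SDEeq1}.

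\emph{Part (ii).} For a global weak solution $(X,Z,\mathscr P)$, the defining identity $Z_t-z=\int_0^t\sigma(Z_{s-})\,\dd X_s$ exhibits the left-hand side as a stochastic integral of the predictable process $\sigma(Z_{\cdot-})$ against the symmetric $\alpha$-stable $X$, and well-definedness of that integral already forces $\int_0^t\sigma(Z_{s-})^\alpha\,\dd s<\infty$ for every $t$. Applying Theorem 4.1 of \cite{Kallenberg} in the direction that turns such a stochastic integral into a time change produces, on an extension $\overline{\mathscr P}$, a symmetric $\alpha$-stable $\tilde Y$ with $\int_0^t\sigma(Z_{s-})\,\dd X_s=\tilde Y_{A_t}$ for $A_t=\int_0^t\sigma(Z_{s-})^\alpha\,\dd s$. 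Since $Z$ is càdlàg it jumps at most countably often, so $A_t=\int_0^t\sigma(Z_s)^\alpha\,\dd s=\tilde I_t$, and hence $Z_t=z+\tilde Y_{\tilde I_t}=:Y_{\tilde I_t}$ for all $t\in[0,\infty)$ $\overline\P$-a.s., with $Y:=z+\tilde Y$ a symmetric $\alpha$-stable process issued from $z$.

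\emph{Main obstacle.} The displayed computations are routine; the genuine work is invoking Kallenberg's representation for an arbitrary measurable $\sigma$. When $\sigma$ vanishes the time change degenerates and the extension really must add randomness, and when $\sigma^{-\alpha}$ fails to be locally integrable $I$ explodes at a finite $\bar I$---which is precisely why (i) is asserted only on $[0,\bar I)$---so one must verify that in either direction the output is an honest symmetric $\alpha$-stable process rather than merely a time change of one, and that the two extensions can be chosen compatibly. Controlling the behaviour at the boundary $\bar I$, equivalently the first time $\varphi$ reaches $+\infty$, is the delicate point; on $[0,\bar I)$ the statement reduces cleanly to the cited theorems.
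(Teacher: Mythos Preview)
The paper does not actually supply a proof of this theorem: it is stated as a black box, attributed to the cited results of Zanzotto and Kallenberg, and used as input for the subsequent analysis. Your proposal is therefore not competing with a proof in the paper but rather fleshing out how the citations assemble, and as such it is essentially correct and appropriately modest in scope.

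One point deserves correction. You write that Zanzotto's results ``establish the analogues of (i)--(ii) for $\alpha\in(1,2]$ that we extend here to $\alpha\in(0,1)$''. This misreads the division of labour. As the paper itself remarks immediately after the theorem, Zanzotto's abstract time-change representation \eqref{SDEeq1} was already proved for all $\alpha\in(0,2)$; what was restricted to $\alpha\in(1,2]$ in his work was the \emph{subsequent} step of identifying the running time $\bar I$ as the hitting time of a concrete set and analysing when the representation is non-trivial. That extension is carried out in Proposition~\ref{O lemma} and Theorem~\ref{sde conditions}, not in Theorem~\ref{Z time change} itself. So Theorem~\ref{Z time change} is genuinely a quotation, not an extension, and your sketch should not claim otherwise.

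A second, minor remark: in Part~(i) your change-of-variables computation $\int_0^{\varphi_t}\sigma(X_u)^\alpha\sigma(X_u)^{-\alpha}\,\dd u=\varphi_t$ tacitly assumes $\sigma(X_u)>0$ for Lebesgue-a.e.\ $u<\varphi_t$, since otherwise the product is $0\cdot\infty$. This is harmless because on the interval $[0,\varphi_t)$ with $t<\bar I$ one has $I_{\varphi_t}<\infty$, forcing $\sigma(X_u)^{-\alpha}<\infty$ and hence $\sigma(X_u)>0$ for a.e.\ such $u$; but it is worth saying.
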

Zanzotto's abstract time-change representation in \eqref{SDEeq1} was proved for all $\alpha\in (0,2)$, whereas the translation into SDEs (i.e. showing the running time is a first hitting time of a known set $A$) and the precise analysis of situations in which the time-change representation is non-trivial was restricted to $\alpha\in (1,2)$. This was caused by the lack of understanding of finiteness of path integrals which for $\alpha\in (0,1)$ we fully provided in the sections above. The usefulness of our precise results lies not only in providing necessary and sufficient properties of $\sigma$ for the existence and uniqueness of solutions but also in giving information about properties of those solutions, for example whether they are global, and whether they explode in finite time. In what follows we shall present the complete picture for stable SDEs with $\alpha\in (0,1)$, utilising the integral tests from Sections \ref{sec:perpetualintegral}-\ref{sec:finiteperpetualtwo}.


This first proposition is a more general version of Lemma 2.3 of Zanzotto \cite{Zanzotto02}. It is more involved than the original because of the complexity of finely open sets for general stable processes, compared to the simpler case for $\alpha\in (1,2]$. The lemma connects the statement (i) of Theorem \ref{Z time change} to the notion of local weak solutions, by demonstrating that the running time $\bar I$ of $X_\varphi$ in \eqref{SDEeq1} is indeed a first hitting time of a measurable set.

\begin{proposition}\label{O lemma}
Let $X$ be a Markov process on $E$ with strong Feller resolvent satisfying Hunt's condition (see \ref{Hunt condition} at the end of Section \ref{sec:setting}), and let $f$ be non-negative and measurable. Define the path integrals and inverses as before:
$$
	I_t = \int_0^t f(X_s) \dd s, \qquad  
	\varphi_t = \inf\Big\{ s > 0 : \int_0^s f(X_u) \dd u > t \Big\}, \qquad t\in[0,\infty).
$$
Then $T_\mathcal{O} = \varphi_\infty$ $\P_y$-almost surely for all $y\in E$, where
$$
	 \mathcal{O} = \Big\{x\in\R^d : \P_x\Big(\forall t>0:\int_0^t f(X_s) \dd s = \infty\Big) = 1 \Big\}.
$$
If $f$ is strictly positive, then $I(T_\mathcal{O}) = \bar I = \inf\{s>0: X_{\varphi_s}\in \mathcal O\}$.
\end{proposition}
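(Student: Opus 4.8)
The plan is to obtain both equalities from the identity $\varphi_{\bar I}=\varphi_\infty$ of \eqref{leif} together with the first part of the proposition, which gives $\varphi_\infty=T_{\mathcal O}$ $\P_y$-almost surely; thus $\varphi_{\bar I}=T_{\mathcal O}$ $\P_y$-a.s. I would first record that $t\mapsto I_t=\int_0^t f(X_s)\dd s$ is non-decreasing and continuous wherever it is finite (its graph has no jumps since $\{t\}$ is Lebesgue-null, and $I_t=\lim_{u\uparrow t}I_u$ by monotone convergence), and that strict positivity of $f$ makes it \emph{strictly} increasing on $[0,\zeta)$; hence its right-continuous inverse $\varphi$ is continuous, $I(\varphi_t)=t$ for $t<\bar I$, and $0<\varphi_s<\varphi_{\bar I}$ for every $s\in(0,\bar I)$, provided the starting point $y\notin\mathcal O$. (If $y\in\mathcal O$ then $\bar I=I_0=0=T_{\mathcal O}$ and $X_{\varphi_s}=y\in\mathcal O$ for all $s$, so the three quantities all vanish and there is nothing to prove; assume $y\notin\mathcal O$ from now on.)

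For the first equality, letting $t\uparrow\bar I$ in $I(\varphi_t)=t$ and using continuity of $I$ at $\varphi_{\bar I}$ (together with the reading $I(\varphi_{\bar I}):=\lim_{u\to\infty}I_u=\bar I$ in the case $\varphi_{\bar I}=\infty$, which holds by the definition of $\bar I$) gives $I(\varphi_{\bar I})=\bar I$; since $\varphi_{\bar I}=T_{\mathcal O}$ this is $I(T_{\mathcal O})=\bar I$. For the second equality, set $\tau=\inf\{s>0:X_{\varphi_s}\in\mathcal O\}$, and argue the two inequalities separately. The bound $\tau\ge\bar I$ is where strict positivity is crucial: for $s\in(0,\bar I)$ we have $0<\varphi_s<\varphi_{\bar I}=T_{\mathcal O}$, so $X_{\varphi_s}\notin\mathcal O$ by the definition of the first hitting time $T_{\mathcal O}$, whence no $s\in(0,\bar I)$ is admissible in the infimum. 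For $\tau\le\bar I$, note $X_{\varphi_{\bar I}}=X_{\varphi_\infty}=X_{T_{\mathcal O}}$ and recall the general fact $X_{T_{\mathcal O}}\in\mathcal O\cup\mathcal O^{r}$ on $\{T_{\mathcal O}<\zeta\}$; it therefore suffices to know that $\mathcal O$ is finely closed, i.e.\ $\mathcal O^{r}\subseteq\mathcal O$, for then $X_{\varphi_{\bar I}}\in\mathcal O$ on $\{T_{\mathcal O}<\zeta\}$, so $\bar I$ is admissible in the infimum and $\tau\le\bar I$ (on the complementary event $\{T_{\mathcal O}=\zeta\}$ one has $\bar I=\infty$, on which $\tau\le\bar I$ is trivial). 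Combining $\tau\ge\bar I$, $\tau\le\bar I$ and the first equality finishes the proof.

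The finely-closed property of $\mathcal O$ is the point I expect to be the main obstacle. It can be approached via Blumenthal's zero–one law: the event $\{\int_0^t f(X_u)\dd u=\infty\text{ for all }t>0\}$ equals $\{\inf_{t>0}I_t=\infty\}$, which lies in $\mathcal F_{0+}$, so its $\P_x$-probability is $0$ or $1$ and $\mathcal O=\{x:\P_x(\inf_{t>0}I_t=\infty)>0\}$; if $x$ is regular for $\mathcal O$ then under $\P_x$ the process enters $\mathcal O$ at arbitrarily small times, and applying the strong Markov property at the stopping times $\inf\{u>q:X_u\in\mathcal O\}$ for rationals $q\downarrow 0$ should force $I_\eps=\infty$ for every $\eps>0$, i.e.\ $x\in\mathcal O$. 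Making this rigorous is delicate precisely because one cannot apply the Markov property at the (random, non-stopping) times at which $X$ lies in $\mathcal O$, so the honest argument must work with the countable family of stopping times above and cope with the residual case $X_{T_{\mathcal O}}\in\mathcal O^{r}\setminus\mathcal O$ — which is where the strong-Feller and Hunt hypotheses on $X$ are used; alternatively one may simply invoke the finely-closed property of $\mathcal O$ that is established in the proof of the first part of the proposition. A last remark: in the degenerate case where $I$ never explodes yet $\int_0^\zeta f(X_s)\dd s<\infty$, one has $\varphi_{\bar I}=\varphi_\infty=\infty$ and $X_{\varphi_s}=\Delta$ for $s\ge\bar I$, so the identity $\bar I=\tau$ should be read with the cemetery $\Delta$ adjoined to $\mathcal O$ — equivalently, with $\tau$ understood as the explosion time of the time-changed process $X_\varphi$ — under which convention the argument is unchanged.
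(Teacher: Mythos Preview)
Your argument for the final sentence is correct and follows the same route as the paper, which dispatches this part in a single terse paragraph: the paper notes that $f>0$ makes $t\mapsto I_t$ strictly increasing so that $\varphi_\infty=\varphi_{\bar I}$, then simply asserts that from $T_{\mathcal O}=\varphi_\infty$ ``it follows that $\bar I=\inf\{s>0:X_{\varphi_s}\in\mathcal O\}$'' and that left-continuity of $I$ gives $\bar I=I(\varphi_\infty)=I(T_{\mathcal O})$. Your two-inequality argument for the former and your limit $I(\varphi_t)=t\uparrow\bar I$ for the latter spell out precisely what is implicit there.

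Your identification of the finely-closed property of $\mathcal O$ as the ingredient behind $\tau\le\bar I$ is correct, and the paper does establish it in the course of proving the first part (writing $\mathcal O=A=\bigcap_{c>0}A_c$ with each $A_c$ finely closed by Lemma~\ref{regular points}, hence the intersection too), so invoking it is entirely legitimate; your direct attempt via Blumenthal's zero--one law and a countable family of stopping times is not needed and, as you suspect, would be awkward to push through without the Hunt/strong-Feller machinery already deployed for the first part. Your closing remark on the degenerate case ($I$ never explodes yet $\int_0^\zeta f(X_s)\dd s<\infty$, so $\varphi_\infty=\infty$ but $\bar I<\infty$ and $X_{\varphi_s}=\Delta$ for $s\ge\bar I$) catches a genuine boundary situation the paper glosses over; your proposed convention of adjoining $\Delta$ to $\mathcal O$ (equivalently, reading $\tau$ as the lifetime of $X_\varphi$) is the natural fix and is consistent with how the identity is actually used in Proposition~\ref{sde cor0}.
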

Note that this statement does not hold for all Markov processes, or even all stable processes. It fails for instance for $X$ a deterministic positive drift on $\R$ with $f(x)=|x|^{-1} \1_{(-\infty,0)}(x)$, because in that case $\mathcal O=\emptyset$ but $\varphi_\infty = T_{\{0\}}< \infty$ under $\P_x$ if the initial value $x$ is negative. However the proposition does apply for two-sided stable L\'evy processes.
\begin{proof}
Fix a $q>0$ and let $X^q$ be the process killed at an independent exponential time $\tau^q$, so that
$$
	\int_0^\infty f(X_s^q) \dd s = \int_0^{\tau^q} f(X_s) \dd s \qquad \text{almost surely}.
$$
For $c>0$ we define the sets
$
	M_{c} \coloneqq \big\{ x\in\R :  \P_x\big( \int_0^\infty f(X^q_s) \dd s \le \frac 1 c\big) > c \big\}
$
and $A_c \coloneqq \R\setminus M_c.$
The sets $M_c$ are super-finite for $(X^q,f)$, and increasing as $c\downarrow0$. Their complements $A_c$ decrease as $c\downarrow 0$ to
$
	A :=\big\{ x\in\R :  \P_x\big( \int_0^\infty f(X^q_s) \dd s < \infty \big) = 0 \big\}.
$
It is clearly true that $\mathcal O \subseteq A$. We can also note that if $x\in A$ then
$$
	\P_x\Big(\int_0^t f(X_s) \dd s = \infty \quad \text{for all }t\geq 0\Big)=1,
$$
since $\tau^q$ is independent and has support $(0,\infty)$. Therefore $x\in\mathcal O$, and so $A=\mathcal O$. For our purposes it is easier to work with $A$ than $\mathcal O$ for the rest of the proof.

We saw in Lemma \ref{super finite technical lemma 2} that for any choice of $c$, it holds that for all $y\in\R$
\begin{align}\label{eq5.12}
	\P_y\Big(\int_0^t f(X_s^q) \1_{M_{c}}(X^q_s) \dd s < \infty \quad \text{for all }t\ge0\Big)=1.
\end{align}
Since $X^q$ stays in $M_c$ before $T_{A_c}$ this implies that $\P_y$-almost surely $\int_0^t f(X_s^q) \dd s< \infty$ for all $t \le T_{A_c}$, where $T_{A_c}$ is the first exit time of $M_{c}$ by the unkilled process $X$. Since the sets $A_c$ are decreasing the times $T_{A_{c}}$ are almost surely increasing. In particular, since \eqref{eq5.12} holds for arbitrary $c>0$, it follows that, for any $y\in\R$,
$$
	\P_y\Big(\int_0^t f(X_s^q) \dd s< \infty \quad \text{ for all }t < \lim_{c\downarrow0} T_{A_{c}}\Big)=1.
$$
Again, since $\tau^q$ is independent of $X$ and has support $(0,\infty)$, this implies that
$$
	\P_y\Big(\int_0^t f(X_s) \dd s< \infty \quad\text{ for all }t < \lim_{c\downarrow0} T_{A_{c}}\Big)=1.
$$
Therefore it follows that  $\P_y$-almost surely $\varphi_\infty \ge \lim_{c\downarrow0} T_{A_{c}}$. If we can show the first equality of
\begin{align}\label{eq5.11}
	T \coloneqq \lim_{c\downarrow0} T_{A_{c}} = T_A=T_{\mathcal O}
\end{align}
$\P_y$-almost surely for any $y\in\R$ then we will have proven the first inequality $\varphi_\infty\geq T_{\mathcal O}$.

Recall that for a Borel set $A$ and any $q>0$ the function $\Phi^q_A(x) = \E_y[\ee^{-q T_A}; T_A < \infty] = \E_y[\ee^{-q T_A}]$ is $q$-excessive. Since our process $X$ satisfies condition \ref{Hunt condition} and $\Phi^q_A$ is bounded, the discussion at the end of Section \ref{sec:setting} tells us that $\Phi^q_A$ is regular (in the sense of \ref{Hunt regularity} or \ref{Hunt regularity alt}), and thus in particular that
$$
	\Phi^q_A(X_{T_{A_{c}}}) \to \Phi^q_A(X_T) \qquad \text{almost surely on $\{T<\infty\}$ as $c\downarrow0$.}
$$
If we fix $A = A_{c_0}$ for some $c_0>0$ then $\Phi^q_A(X_{T_{A_{c}}})$ equals one for all $c\le c_0$, since the sets $A_c$ are decreasing and contain their regular points by Lemma \ref{regular points} so that $X_{T_{A_c}}$ is contained in $A_c\subseteq A_{c_0}$ on the event $\{T<\infty\}$ for any $y\in \R$. Thus the limit $\Phi^q_A(X_T)=1$ on $\{T<\infty\}$, that is, $\P_{X_T(w)}(T_{A_{c_0}}=0)=1$ for $\P_y$-almost every $w$ such that $T(w) < \infty$. Since $A_{c_0}$ contains its regular points, this implies that $X_T\in A_{c_0}$ on $\{T<\infty\}$. Then because our choice of $c_0>0$ was arbitrary, it follows that
$$
	X_T \in A =  \bigcap_{c>0} A_{c} \qquad \text{almost surely on }\{T<\infty\}.
$$
This implies \eqref{eq5.11} on $\{T<\infty\}$. On the event $\{T=\infty\}$, \eqref{eq5.11} is trivial, and so it holds almost surely, and thus $\varphi_\infty \ge T_A = T_{\mathcal O}$ almost surely.

The inequality $\varphi_\infty \le T_{\mathcal O}$ comes from the fact that for any $u>0$, any $y\in\R$,
\begin{align*}
	\P_y(\varphi_\infty \le T_{\mathcal O} + u\; ; T_{\mathcal O} < \infty)
	& = \P_y\Big(\int_0^{T_{\mathcal O} + u} f(X_s) \dd s = \infty\; ; T_{\mathcal O} < \infty\Big) \\
	& = \E_y\Big[ \P_{X_{T_{\mathcal O}}}\Big(\int_0^u f(X_s) \dd s = \infty\Big) \; ; T_{\mathcal O} < \infty\Big] \\
	& = \P_y(T_{\mathcal O} < \infty),
\end{align*}
using that $\mathcal O=A=\cap_{c>0} A_c$ and that all $A_c$ (and thus the intersection) are finely closed by Lemma \ref{regular points} so that $X_{T_{\mathcal O}}\in \mathcal O$. This implies that $\P_y(\varphi_\infty \le T_{\mathcal O} + u) = \P_y(\varphi_\infty \le T_{\mathcal O} + u < \infty) + \P_y(T_{\mathcal O} = \infty) = 1$. Therefore $\varphi_\infty \le T_{\mathcal O} + u$ almost surely for all $u>0$, and so $\varphi_\infty \le T_{\mathcal O}$ almost surely.

Finally, we note that if $f>0$ then the integrals $t\mapsto I_t$ are strictly increasing (up to a possibly finite explosion time). Hence, the generalised inverse $\varphi$ does not jump to $+\infty$, that is, $\varphi_\infty=\bar \varphi=\varphi_{\bar I}$. Now since $T_\mathcal{O} = \varphi_\infty$ it follows that
$
	\bar I=\inf\{s>0: X_{\varphi_s}\in \mathcal O\}.
$
It remains to note that since $I$ is left-continuous, $\bar I = I(\varphi_\infty) = I(T_\mathcal{O})$.
\end{proof}
Although the setting of Lemma \ref{O lemma} is quite general, we will always use it for stable SDEs with $f=\sigma^{-\alpha}$. Since the presence of the set $\mathcal O$ is crucial (local solutions will live on the complement) and it depends on $\alpha$ and $\sigma$ we shall give it a name:
\begin{definition}\label{O def}
For $\sigma:\R\to [0,\infty)$ measurable and $X$ a symmetric stable L\'evy process we denote the set of irregular points by
\begin{align*}
	 \mathcal{O}(\sigma,\alpha) & := \Big\{x\in\R : \P_x\Big(\int_0^t \sigma(X_s)^{-\alpha} \dd s = \infty\Big) = 1
	  \text{ for all }t> 0 \Big\}
\end{align*}
and the null-set of $\sigma$ by $N(\sigma) := \{x\in\R : \sigma(x)=0\}$.
\end{definition}
Note that the definition of $\mathcal O(\sigma,\alpha)$ is purely stochastic and as such is not very useful when used in SDE theorems. It is the results on path integrals which will make the following time-change results for SDEs useful.\smallskip

Before coming to the main theorem let us first translate the first part of Theorem \ref{Z time change} into a statement on (local) weak solutions on the complement of the irregular points
\begin{proposition}\label{sde cor0}
If $z\in \R\backslash \mathcal O(\sigma,\alpha)$, then $(Y, X_\varphi,\overline{\mathscr P})$ from Theorem \ref{Z time change}(i) is a local weak solution to the SDE \eqref{SDEeq} on $A=\R\setminus \mathcal{O}(\sigma,\alpha)$. In addition, if either 
\begin{enumerate}
	\item[\namedlabel{SDE A1}{$(a)$}] $\P_z(\sigma(X_{\varphi_\infty})=0)=1$, or
	\item[\namedlabel{SDE A2}{$(b)$}] $\P_z(\bar I=\infty)=1$
\end{enumerate}
hold, then the solution is a global solution.
\end{proposition}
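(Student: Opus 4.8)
The plan is to unwind Theorem~\ref{Z time change}(i) together with Proposition~\ref{O lemma}; the only genuine work is to verify that the running time $\bar I$ in \eqref{SDEeq1} is $\overline\P$-almost surely strictly positive and that it coincides with the first hitting time of $\mathcal O(\sigma,\alpha)$ by the candidate solution $Z:=X_\varphi$.

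First I would apply Proposition~\ref{O lemma} with $f=\sigma^{-\alpha}$. As $\sigma$ is $[0,\infty)$-valued, $f$ is $(0,\infty]$-valued, hence strictly positive; and a symmetric stable process has a strong Feller resolvent and satisfies Hunt's condition~\ref{Hunt condition}. The proposition therefore gives, $\P_z$-a.s., $T_{\mathcal O}=\varphi_\infty$ and $\bar I=I(\varphi_\infty)=I(T_{\mathcal O})=\inf\{s>0:X_{\varphi_s}\in\mathcal O\}$, where $\mathcal O$ is the set in that proposition; comparing the two formulas, $\mathcal O$ is precisely the set $\mathcal O(\sigma,\alpha)$ of Definition~\ref{O def}. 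Thus, with $A^c=\mathcal O(\sigma,\alpha)$, the first hitting time of $A^c$ by $Z=X_\varphi$ equals $\bar I$. To see that $\bar I>0$ a.s., note that $\mathcal O(\sigma,\alpha)=\bigcap_{c>0}A_c$ is finely closed, being an intersection of the finely closed sets $A_c$ from the proof of Proposition~\ref{O lemma} (Lemma~\ref{regular points}); hence $z\notin\mathcal O(\sigma,\alpha)$ means $z$ is not regular for $\mathcal O(\sigma,\alpha)$, i.e.\ $\P_z(T_{\mathcal O}>0)>0$, and Blumenthal's zero-one law upgrades this to $\P_z(T_{\mathcal O}>0)=1$. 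So $\varphi_\infty=T_{\mathcal O}>0$ a.s., and since $f>0$ the path integral $t\mapsto I_t$ is strictly increasing from $I_0=0$, giving $\bar I=I(\varphi_\infty)>0$ a.s.

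Now Theorem~\ref{Z time change}(i) furnishes a symmetric $\alpha$-stable driver $Y$ on an extension $\overline{\mathscr P}$ with $Z_t-z=X_{\varphi_t}-z=\int_0^t\sigma(X_{\varphi_s})\dd Y_s$ for $t\in[0,\bar I)=[0,T_{A^c})$. The process $Z$ is càdlàg, being a right-continuous time change of the càdlàg process $X$; it satisfies $Z_0=X_{\varphi_0}=X_0=z$, since $\varphi_0=0$ (again because $f>0$); and $\sigma(Z_{s-})$ is the left-continuous, hence predictable, integrand demanded in the definition of a weak solution, and differs from $\sigma(Z_s)$ only on a countable random set of times, which does not affect the integral against the Lévy process $Y$ (no fixed times of discontinuity). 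Hence $Z_t-z=\int_0^t\sigma(Z_{s-})\dd Y_s$ for all $t<T_{A^c}$, and $T_{A^c}>0$, so $(Y,X_\varphi,\overline{\mathscr P})$ is a local weak solution of \eqref{SDEeq} on $A=\R\setminus\mathcal O(\sigma,\alpha)$.

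Finally, the two extra hypotheses. Under~\ref{SDE A2} we have $\bar I=\infty$ a.s., so the identity of the previous paragraph already holds for every $t\in[0,\infty)$, which is exactly the assertion that $Z$ is a global weak solution. Under~\ref{SDE A1}, by \eqref{leif} and the definition of $\varphi$ we have $\varphi_t=\varphi_\infty$, hence $Z_t=X_{\varphi_\infty}$, for all $t\ge\bar I$; on $\{\bar I<\infty\}$ the hypothesis $\sigma(X_{\varphi_\infty})=0$ forces $\sigma(Z_{s-})=0$ for all $s>\bar I$, so that for $t\ge\bar I$ the integral $\int_{\bar I}^t\sigma(Z_{s-})\dd Y_s$ reduces to the single jump term $\sigma(Z_{\bar I-})\Delta Y_{\bar I}$; matching this against $\Delta Z_{\bar I}=X_{\varphi_\infty}-X_{\varphi_\infty-}$ then extends the identity from $[0,\bar I)$ to all of $[0,\infty)$, and $Z$ is again a global weak solution. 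I expect the one delicate point to be precisely this matching at the endpoint $\bar I$: one must check $\Delta Z_{\bar I}=\sigma(Z_{\bar I-})\Delta Y_{\bar I}$, i.e.\ that $X$ and $Y$ jump consistently at $\varphi_\infty$, which should come from the jump structure underlying \eqref{SDEeq1} (and is vacuous whenever $\bar I$ is, $\overline\P$-a.s., not a jump time of $Y$). The other items used above — equality of $\mathcal O$ and $\mathcal O(\sigma,\alpha)$, fine-closedness, $\varphi_0=0$, and the $\sigma(Z_s)$-versus-$\sigma(Z_{s-})$ identification — are routine.
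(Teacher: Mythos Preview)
Your approach is exactly the paper's: invoke Proposition~\ref{O lemma} with $f=\sigma^{-\alpha}$ to identify $\bar I$ as the first hitting time of $\mathcal O(\sigma,\alpha)$ by $X_\varphi$, then read off the local solution from \eqref{SDEeq1}, and extend under \ref{SDE A1} or \ref{SDE A2}. The paper's proof is terser and simply asserts the extension; your added detail (fine-closedness of $\mathcal O$, $\varphi_0=0$, $\bar I>0$) is all correct and matches arguments the paper spreads elsewhere (e.g.\ in the proof of Theorem~\ref{sde conditions}\ref{sde thm1}).

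The one place you flag as delicate --- the possible jump at $\bar I$ under~\ref{SDE A1} --- dissolves cleanly via quasi-left-continuity: the $\varphi_u$ are stopping times increasing to $\varphi_\infty$ as $u\uparrow\bar I$ (since $I$ is strictly increasing), so $X_{\varphi_u}\to X_{\varphi_\infty}$ a.s.\ on $\{\varphi_\infty<\infty\}$, giving $Z_{\bar I-}=X_{\varphi_\infty}=Z_{\bar I}$. Thus $\sigma(Z_{\bar I-})=0$ under~\ref{SDE A1}, and the term $\sigma(Z_{\bar I-})\Delta Y_{\bar I}$ vanishes regardless of whether $Y$ jumps at $\bar I$; no matching is needed.
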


\begin{proof}
Since $\sigma^{-\alpha}>0$,
Proposition \ref{O lemma} tells us that
$
	\bar I=\inf\{s>0: X_{\varphi_s}\in \mathcal O(\sigma,\alpha)\}.
$
Thus, Zanzotto's time-change implies that with $Z:=X_{\varphi}$ the triple $(Y,Z,\overline{\mathscr P})$ is a local solution to \eqref{SDEeq} on the complement of $\mathcal O(\sigma,\alpha)$. Assuming either \ref{SDE A1} or \ref{SDE A2} implies that the integral equation \eqref{SDEeq1} holds for all $t\in[0,\infty)$, and therefore that the solution is global. 
\end{proof}


Conversely, we use the second part of Theorem \ref{Z time change} to derive necessary conditions for the existence of solutions.
\begin{proposition}\label{sde_cor4}
If there exists a non-trivial local weak solution to \eqref{SDEeq} on a set $A$ with issuing point $z$, then $z\in\R\setminus \mathcal{O}(\sigma,\alpha)$. This in particular holds when $A=\R$, that is in the case of global weak solutions.
\end{proposition}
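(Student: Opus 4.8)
The plan is to prove the statement by contraposition: assuming $z\in\mathcal O(\sigma,\alpha)$, I will show that \emph{every} weak solution issued from $z$ — local on an arbitrary set $A$, or global — is trivial.

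\emph{Reduction to the global case.} First I would pass from a local weak solution on $A$ to a global one. Given a non-trivial local weak solution $(X,Z,\mathscr P)$ on $A$, the requirement $T_{A^c}>0$ $\P$-a.s.\ means $z$ is irregular for $A^c$, hence $z$ lies in the fine interior $V$ of $A$; thus $V$ is finely open with $z\in V\subseteq A$. Stopping the solution at $\tau=T_{V^c}$ and freezing it afterwards produces $\bar Z=Z^{\tau}$, which solves $\ddd\bar Z_t=\bar\sigma(\bar Z_{t-})\dd X_t$ for \emph{all} $t\ge0$ with the truncated coefficient $\bar\sigma:=\sigma\mathbf 1_V$ — the process gets stuck at $Z_\tau\in V^c$, where $\bar\sigma$ vanishes because $V^c$ is finely closed — and $\bar Z$ is still non-trivial since $Z$ was non-constant on $[0,\tau)$. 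It remains to note that this modification does not move $z$ out of the relevant set of irregular points, i.e.\ $z\in\mathcal O(\sigma,\alpha)\iff z\in\mathcal O(\bar\sigma,\alpha)$: the event $\{\int_0^t\sigma(X_s)^{-\alpha}\dd s=\infty\ \text{for all }t>0\}$ depends on $\sigma$ only through its restriction to $V$, because $X$ issued from $z$ remains in $V$ for a positive random length of time (for every finely open $W\ni z$ one has $T_{W^c}>0$ $\P_z$-a.s., by the characterisation of the fine topology in Section~\ref{sec:setting} together with Blumenthal's zero–one law). This reduces everything to global solutions; alternatively, a local version of Theorem~\ref{Z time change}(ii), available from Zanzotto's work, would let one bypass this step.

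\emph{The global case.} Now let $(X,Z,\mathscr P)$ be a global weak solution with $Z_0=z$, and suppose $z\in\mathcal O(\sigma,\alpha)$. Theorem~\ref{Z time change}(ii) yields a symmetric $\alpha$-stable process $Y$ on an extension, with $Y_0=z$, such that $Z_t=Y_{\tilde I_t}$ where $\tilde I_t=\int_0^t\sigma(Z_s)^{\alpha}\dd s$. Since $\ddd\tilde I_s=\sigma(Z_s)^{\alpha}\dd s=\sigma(Y_{\tilde I_s})^{\alpha}\dd s$ is carried by $\{s:\sigma(Y_{\tilde I_s})>0\}$, the change of variables $r=\tilde I_s$ gives
\begin{align*}
	\int_0^{\tilde I_t}\sigma(Y_r)^{-\alpha}\dd r=\int_0^t\sigma(Y_{\tilde I_s})^{-\alpha}\dd\tilde I_s\le t<\infty\qquad\text{for all }t\ge0,
\end{align*}
so $\tilde I$ is (up to a Lebesgue-null ambiguity) a right-inverse of the path integral $u\mapsto\int_0^u\sigma(Y_r)^{-\alpha}\dd r$ of $Y$. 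But $Y$ issued from $z$ has the law of $X$ issued from $z$, so $z\in\mathcal O(\sigma,\alpha)$ gives $\int_0^u\sigma(Y_r)^{-\alpha}\dd r=\infty$ for every $u>0$, $\P$-a.s.; together with the displayed bound this forces $\tilde I_t=0$ for every finite $t$, whence $Z_t=Y_0=z$ for all $t$. Thus $Z$ is trivial, contradicting non-triviality of the reduced solution and proving the claim.

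\emph{Main obstacle.} I expect the reduction in the first paragraph to be the delicate part. Verifying that the stopped, truncated process is a genuine global weak solution requires care with the left-hand limits $Z_{s-}$ at jump times of the finite-variation driver $X$ and with the values of $\bar\sigma$ on $\partial V$, and the localisation property of $\mathcal O(\sigma,\alpha)$ rests on the potential theory of finely open sets — precisely the feature that, as the authors remark for Proposition~\ref{O lemma}, makes $\alpha\in(0,1)$ harder than Zanzotto's $\alpha\in(1,2]$ case. Once Theorem~\ref{Z time change}(ii) is in hand the global argument is short, and the change-of-variables identity above is the SDE counterpart of~\eqref{volk3.2}.
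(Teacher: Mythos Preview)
Your proof is correct and follows essentially the same route as the paper: reduce the local solution to a global one by stopping and truncating the coefficient to the fine interior of $A$, apply Kallenberg's representation (Theorem~\ref{Z time change}(ii)), and then compare the time-change $\tilde I$ with the path integral $\int_0^{\cdot}\sigma(Y_s)^{-\alpha}\dd s$. The paper argues directly rather than by contraposition and, after extending to the global solution, reverts to the original $\sigma$ on $[0,T_{A^c}]$ to obtain the inequality $\tilde\varphi_t\ge\int_0^t\sigma(Y_s)^{-\alpha}\dd s$, which lets it conclude $z\notin\mathcal O(\sigma,\alpha)$ without the detour through $\mathcal O(\bar\sigma,\alpha)$; but your implication $z\in\mathcal O(\sigma,\alpha)\Rightarrow z\in\mathcal O(\bar\sigma,\alpha)$ is immediate from $\bar\sigma\le\sigma$, so this is a cosmetic difference only.
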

\begin{proof}
 To make use of Kallenberg's time-change let us turn the local into a global solution by defining $\sigma^\dagger \coloneqq \sigma \cdot (1-\1_{A^c\cup(A^c)^r})$. Then the process $Z^\dagger_t \coloneqq Z_{t\wedge T_{A^c}}$ satisfies
$$
	Z^\dagger_t - z = \int_0^t \sigma^\dagger (Z^\dagger_{s-}) \dd X_s, \qquad \text{for all } t\ge0.
$$
Thus  Kallenberg's time-change tells us that there exists a symmetric stable process $Y$ such that $Z^\dagger_t = Y_{\tilde I^\dagger_t}$ for all $t\ge0$ almost surely, where $\tilde I^\dagger_t = \int_0^t \sigma^\dagger(Z^\dagger_s) \dd s$, and thus it follows that $Z_t = Y_{\tilde I_t}$ for all $t\le T_{A^c}$ almost surely, using $\tilde I_t = \int_0^t \sigma(Z_s) \dd s$ and $\tilde \varphi_t = \inf\{ s > 0 : \tilde I_s > t \}$. Since $Z$ is not constant it follows from the SDE equation that $\sigma(Z_s)>0$ for some Lebesgue-positive set of times $s<T_{A^c}$, and thus $\tilde I$ is also not constant zero, and $\tilde \varphi$ does not jump instantaneously to $\infty$. Combining this with
\begin{align*}
	 \tilde \varphi_t
	 \ge \int_0^{\tilde \varphi_t} \1_{(\sigma(Z_s)>0)} \dd s
	 =  \int_0^{\tilde \varphi_t} \sigma(Z_u)^{-\alpha} \sigma(Z_u)^{\alpha} \dd u 
	 =  \int_0^t \sigma(Z_{\tilde \varphi_s})^{-\alpha} \dd s 
	 =  \int_0^t \sigma(Y_s)^{-\alpha} \dd s \eqqcolon I_t
\end{align*}
we have shown that there is almost surely some $t<T_{A^c}$ such that $I_t\leq\tilde \varphi_t<\infty$. It therefore holds by definition of $\mathcal O(\sigma,\alpha)$ that the issuing point $z$ of $Y$ under $\overline\P$ is an element of $\R\setminus  \mathcal{O}(\sigma,\alpha)$.
\end{proof}
Finally, we can use our results on path integrals to turn the abstract formulations into analytic results. In the case that $\alpha\in(1,2]$, Zanzotto provided the analytic expression
\begin{align}\label{O set greater 1}
	 \mathcal{O}(\sigma,\alpha) = \Big\{x\in\R : \int_{x-\eps}^{x+\eps}  \sigma(y)^{-\alpha} \dd y = \infty \text{ for all }\eps>0  \Big\}.
\end{align}
In the case that $\alpha \in (0,1)$ according to Theorem \ref{thin ES} it holds that
\begin{align}\label{O set less 1}
	 \mathcal{O}(\sigma,\alpha) = \Big\{x\in\R : \int_{\R\setminus B} \sigma(y)^{-\alpha} \abs{x-y}^{\alpha-1} \dd y = \infty\text{ for all }\P_x\text{-thin sets } B \Big\}
\end{align}
since $U(x,dy)=|x-y|^{\alpha-1}\dd y$ modulo some normalising constant. Since Wiener's criterion (see \cite{BH} Corollary V.4.17) gives an analytic test for thinness in terms of capacities, the test is also analytic. If in addition $\sigma$ has only isolated monotone zeros (e.g. $\sigma(x)=|x|^\beta$) then Theorem \ref{ES stable} implies for $\alpha\in (0,1)$ the clean integral tests
$$
	 \mathcal{O}(\sigma,\alpha) = \Big\{x\in\R : \int_{x-\eps}^{x+\eps}  \sigma(y)^{-\alpha}|x-y|^{\alpha-1} \dd y = \infty \text{ for all }\eps>0  \Big\},
$$
 which is precisely Zanzotto's integral test modulo an additional polynomial factor. Recall from the introduction that complements of all $\P_z$-thin sets for the stable process with $\alpha>1$ contain a ball around $z$. Hence, on a structural level the difference between the integral tests for $\alpha\in(1,2]$ and $\alpha\in(0,1)$ is the appearance of the new polynomial factor.

We are now in a position to formulate a set of statements which, for the Brownian motion, are known under the name Engelbert-Schmidt theorems. The main theorem of this article extends the Engelbert-Schmidt theorems to stable SDEs with $\alpha\in (0,1)$. This same result was proved for $\alpha\in (1,2)$ by Zanzotto \cite{Zanzotto02}, and our proofs follow his closely, with the important distinction being that our Proposition \ref{O lemma} is a more general version of his Lemma 2.3 that no longer depends on local times.

\begin{theorem}\label{sde conditions} Suppose $X$ is a symmetric stable process with $\alpha\in (0,1)$ and $\sigma:\R\to [0,\infty)$ is measurable. Then the following statements hold with the set of irregular points $\mathcal O(\sigma,\alpha)$ from \eqref{O set less 1}.
\begin{enumerate} 
	\item\label{sde thm1} For fixed $z\in\R$ there exists a non-trivial local weak solution to the SDE \eqref{SDEeq} if and only if $z\in\R\backslash \mathcal O(\sigma,\alpha)$.
	
	\item\label{sde thm2} A global weak solution to \eqref{SDEeq} exists for all $z\in\R$ if and only if $\mathcal{O}(\sigma,\alpha)\subseteq N(\sigma)$.
	
	\item\label{sde thm3} A non-trivial global weak solution to \eqref{SDEeq} exists for all $z\in\R$ if and only if $\mathcal{O}(\sigma,\alpha) = \emptyset$.
	
	\item\label{sde thm4} There exists a global weak solution to \eqref{SDEeq} for all $z\in\R$, each of which is unique in law, if and only if $\mathcal{O}(\sigma,\alpha) = N(\sigma)$. In that case the solution process $Z$ satisfies $Z= Y_{\varphi}$, where $Y$ is a symmetric stable process on $\R$ of index $\alpha$ and $\varphi_t = \inf\big\{ s > 0 : \int_0^s \sigma(Y_u)^{-\alpha} \dd u > t \big\}, t\ge0.$
\end{enumerate}
\end{theorem}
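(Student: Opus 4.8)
The plan is to read off all four statements from Propositions \ref{sde cor0}, \ref{sde_cor4} and \ref{O lemma}, together with the elementary observation that the constant process $Z\equiv z$ is a global weak solution of \eqref{SDEeq} if and only if $\sigma(z)=0$, i.e.\ $z\in N(\sigma)$ (forwards $\sigma(Z_{s-})\equiv 0$ kills the integral; backwards $0=Z_t-z=\sigma(z)X_t$ forces $\sigma(z)=0$). Statement (i) is then immediate. If $z\notin\mathcal O(\sigma,\alpha)$, Proposition \ref{sde cor0} produces the local weak solution $(Y,X_\varphi,\overline{\mathscr P})$ on $\R\setminus\mathcal O(\sigma,\alpha)$; it is non-trivial because $\mathcal O(\sigma,\alpha)$ is finely closed (the sets $A_c$ in the proof of Proposition \ref{O lemma} are, by Lemma \ref{regular points}, and so is their intersection), so $z$ is not regular for $\mathcal O(\sigma,\alpha)$ and hence $\P_z(T_{\mathcal O(\sigma,\alpha)}>0)=1$ by Blumenthal's zero-one law; since $\varphi$ is continuous with $\varphi_0=0$ and $\varphi_\infty=T_{\mathcal O(\sigma,\alpha)}$ (Proposition \ref{O lemma}), $X_\varphi$ runs $X$ over a nondegenerate interval and is $\overline\P$-a.s.\ non-constant. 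The converse is exactly Proposition \ref{sde_cor4}.

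For (ii): if $\mathcal O(\sigma,\alpha)\not\subseteq N(\sigma)$, any $z_0\in\mathcal O(\sigma,\alpha)\setminus N(\sigma)$ admits neither a non-trivial local solution (Proposition \ref{sde_cor4}) nor the constant one ($\sigma(z_0)\neq0$), so no weak solution issues from $z_0$. Conversely, assume $\mathcal O(\sigma,\alpha)\subseteq N(\sigma)$ and fix $z$: for $z\in\mathcal O(\sigma,\alpha)$ the constant solution works, and for $z\notin\mathcal O(\sigma,\alpha)$ I would upgrade the solution of Proposition \ref{sde cor0} to a global one by checking its hypothesis \ref{SDE A1}, namely $\sigma(X_{\varphi_\infty})=0$ $\overline\P$-a.s.: by Proposition \ref{O lemma} $\varphi_\infty=T_{\mathcal O(\sigma,\alpha)}$, so on $\{T_{\mathcal O(\sigma,\alpha)}<\infty\}$ fine-closedness gives $X_{\varphi_\infty}\in\mathcal O(\sigma,\alpha)\subseteq N(\sigma)$, while on $\{T_{\mathcal O(\sigma,\alpha)}=\infty\}$ one has $X_{\varphi_\infty}=\Delta$; in either case $\sigma(X_{\varphi_\infty})=0$. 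Statement (iii) is the specialisation $\mathcal O(\sigma,\alpha)=\emptyset$: then $\R\setminus\mathcal O(\sigma,\alpha)=\R$, $X_{\varphi_\infty}=\Delta$ forces \ref{SDE A1}, and the solution from each $z$ is non-trivial by the argument of (i); conversely a non-trivial global (hence local) solution from every $z$ forces $z\notin\mathcal O(\sigma,\alpha)$ via Proposition \ref{sde_cor4}, so $\mathcal O(\sigma,\alpha)=\emptyset$.

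For (iv), existence for every $z$ already gives $\mathcal O(\sigma,\alpha)\subseteq N(\sigma)$ by (ii), so it remains to pin the reverse inclusion using uniqueness. A point $z_0\in N(\sigma)\setminus\mathcal O(\sigma,\alpha)$ would carry both the constant solution ($\sigma(z_0)=0$) and, by (i) together with the globalisation step of (ii), a non-trivial global solution, two solutions of distinct law; hence $N(\sigma)\subseteq\mathcal O(\sigma,\alpha)$ and $\mathcal O(\sigma,\alpha)=N(\sigma)$. Conversely, suppose $\mathcal O(\sigma,\alpha)=N(\sigma)$. From $z\in N(\sigma)=\mathcal O(\sigma,\alpha)$ only the constant solution is possible (Proposition \ref{sde_cor4}). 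From $z\notin N(\sigma)$, Theorem \ref{Z time change}(ii) writes any global solution as $Z_t=Y_{\tilde I_t}$ with $Y$ symmetric $\alpha$-stable started at $z$ and $\tilde I_t=\int_0^t\sigma(Z_s)^\alpha\dd s$, and inverting this additive functional — using that $\tilde I$ increases strictly until $Z$ reaches $N(\sigma)=\mathcal O(\sigma,\alpha)$, where $\sigma=0$ freezes $\tilde I$ exactly as $t\mapsto\int_0^t\sigma(Y_u)^{-\alpha}\dd u$ jumps to $+\infty$ while $Y$ sits at an irregular point — identifies $\tilde I$ with the time change $\varphi$ of the theorem statement. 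Thus $Z=Y_\varphi$ is a fixed measurable functional of the stable process $Y$, whose law is that of a symmetric $\alpha$-stable process from $z$, so the law of $Z$ is determined; this also yields the displayed representation.

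The analytically heavy work is all upstream (Sections \ref{sec:perpetualintegral}--\ref{sec:finiteperpetualtwo} and Proposition \ref{O lemma}), so the chief difficulty here is bookkeeping: making the globalisation step — the verification of \ref{SDE A1} — go through uniformly whether $X$ hits $\mathcal O(\sigma,\alpha)$ and then rests there (because $\sigma=0$ on $\mathcal O(\sigma,\alpha)\subseteq N(\sigma)$) or escapes to the cemetery before time $\bar I$, and keeping precise the sense in which the latter, explosive, process still counts as a global weak solution. The second delicate point is the time-change inversion in the uniqueness half of (iv), where the flat stretch of $\tilde I$ after $Z$ enters $N(\sigma)=\mathcal O(\sigma,\alpha)$ must be matched with the explosion of $\int_0^\cdot\sigma(Y_u)^{-\alpha}\dd u$ caused by $Y$ being at an irregular point, so that the two generalised inverses coincide — which is precisely where Proposition \ref{O lemma} substitutes for the joint-continuity-of-local-time argument used by Zanzotto for $\alpha\in(1,2)$.
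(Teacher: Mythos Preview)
Your proposal is correct and follows essentially the same route as the paper: parts (i)--(iii) are deduced from Propositions \ref{sde cor0}, \ref{sde_cor4} and \ref{O lemma} together with the fine-closedness of $\mathcal O(\sigma,\alpha)$, and part (iv) combines (ii) with Kallenberg's representation (Theorem \ref{Z time change}(ii)) and the time-change inversion driven by Proposition \ref{O lemma}. The only cosmetic differences are that in (iii) the paper verifies hypothesis \ref{SDE A2} ($\bar I=\infty$) rather than \ref{SDE A1}, and in the forward direction of (iv) you are slightly more explicit than the paper about why the competing non-trivial solution is \emph{global} (you invoke the globalisation step of (ii), whereas the paper cites (i) and leaves this implicit).
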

Let us compare again with the case $\alpha\in (1,2]$. In this case $\R\backslash\mathcal O(\sigma,\alpha)$ is open so in the situation of (i) one can always consider local solutions on small intervals $[u,v]$ around the starting value $z$. Without further assumptions on $\sigma$ this is generally false for $\alpha\in (0,1)$ using examples where $\sigma$ vanishes on very small disjoint intervals accumulating at $z$.
\begin{proof}\hfill\\
\ref{sde thm1}:
First suppose that $z\in\R\setminus  \mathcal{O}(\sigma,\alpha)$. From Proposition \ref{sde cor0} it follows that there exists a local weak solution, with solution process $Z = X_\varphi$. Further $T_{\mathcal O(\sigma,\alpha)}>0$ almost surely, as $\mathcal O(\sigma,\alpha)$ is finely closed for $X$ (see the end of the proof of Proposition \ref{O lemma}). Otherwise $z$ would be a regular point of $\mathcal O(\sigma,\alpha)$ and as such in $\mathcal O(\sigma,\alpha)$. This implies that $\varphi_\infty=T_{\mathcal O(\sigma,\alpha)}>0$, and thus that $t\mapsto \int_0^t \sigma^{\alpha}(X_s)\dd s$ does not jump to $+\infty$ immediately. This implies that the time-change does not explode instantaneously (i.e.\ $\bar \varphi>0$) almost surely, and so the solution $Z=X_{\varphi}$ is not trivial.

For the reverse implication, from Proposition \ref{sde_cor4} it follows that if there exists a non-trivial local weak solution then $z\in\R\setminus \mathcal{O}(\sigma,\alpha)$. 

\ref{sde thm2}:
Since $\mathcal O(\sigma,\alpha)$ is finely closed for $X$ (see the end of the proof of Proposition \ref{O lemma}) we see that $X_{T_{\mathcal O(\sigma,\alpha)}}\in \mathcal O(\sigma,\alpha)$ on the event that the hitting time is finite. Hence if we suppose that $\mathcal{O}(\sigma,\alpha)\subseteq N(\sigma)$, it follows that $\sigma(X_{\varphi_\infty}) = \sigma(X_{T_{\mathcal O(\sigma,\alpha)}}) = 0$ $\P_z$-almost surely, for any $z\in\R$. Thus Proposition \ref{sde cor0} gives existence of a global solution, with solution process $Z = X_\varphi$.
	
Now suppose that a global solution exists for every issuing point $z\in\R$. Proposition \ref{sde_cor4} tells us that if $z \in \mathcal{O}(\sigma,\alpha)$ then there is no non-trivial local weak solution to \eqref{SDEeq} with issuing point $z$. If we then assume that there exists a weak solution for all issuing points, it follows that the solution for $z \in  \mathcal{O}(\sigma,\alpha)$ is trivial, and therefore that $\sigma(z)$ must be zero for $z \in\mathcal{O}(\sigma,\alpha)$.
	
\ref{sde thm3}:
First suppose that $\mathcal{O}(\sigma,\alpha)$ is empty. Then since $\sigma^{-\alpha}>0$, Proposition \ref{O lemma} yields that $\bar I=\inf\{s>0: X_{\varphi_s}\in \mathcal O\}=\infty$ almost surely under any $\P_z$. Therefore for any $z\in\R$, Proposition \ref{sde cor0} gives existence of a global weak solution, with solution process $Z = X_\varphi$.

Now we prove the reverse implication. Proposition \ref{sde_cor4} tells us that if $z \in \mathcal{O}(\sigma,\alpha)$ then there is no non-trivial solution to \eqref{SDEeq} with issuing point $z$. If we then assume that there exists a non-trivial weak solution for all issuing points, it follows that $\mathcal{O}(\sigma,\alpha)$ is empty.

\ref{sde thm4}:
``$\boldsymbol\Rightarrow$" Suppose that for every $z\in\R$ there exists a global weak solution to \eqref{SDEeq} and that each of those solutions is unique in law. Then part (ii) of this theorem implies that $\mathcal{O}(\sigma,\alpha)\subseteq N(\sigma)$. Now suppose for contradiction that there is a point $z\in N(\sigma) \cap (\R\setminus \mathcal{O}(\sigma,\alpha))$. Since $z\in N(\sigma)$, the trivial solution is a solution. Since $z\in\R\backslash \mathcal O(\sigma,\alpha)$, (i) tells us that there exists a non-trivial weak solution with solution process $Z=X_{\varphi}$. Then we have two weak solutions issued from $z$ which are not equal in law, and this contradicts uniqueness. Hence, $N(\sigma)\subseteq \mathcal{O}(\sigma,\alpha)$, and the equality has been proved.

``$\boldsymbol \Leftarrow$" Now suppose that $\mathcal{O}(\sigma,\alpha) = N(\sigma)$. For this part of the proof we will closely follow the proof of Theorem 2.6 in Zanzotto \cite{Zanzotto02}, the crucial difference here being that our Lemma \ref{O lemma} generalises his Lemma 2.3.  
By (ii), a global weak solution exists for all $z\in\R$. We now use the representation of Kallenberg in Theorem \ref{Z time change} (ii) to prove the time-change representation for this solution, and thus to deduce uniqueness. If we fix one of these solutions $(X,Z,\mathscr P)$, then Kallenberg's time-change representation of \eqref{eq5.5} yields that there is a symmetric stable process $Y$ such that $Y_{\tilde I_t} = Z_t, t\in [0,\infty)$,  defined in general on an extension of $\mathscr P$, where $\tilde I_t \coloneqq  \int_0^t \sigma(Z_{s})^{\alpha} \dd s$. Since $\tilde I_{\tilde \varphi_t} = t\wedge \tilde I_\infty$ we obtain $Z_{\tilde \varphi_s} = Y_s$ for $s\in[0,\tilde I_\infty)$, where $\tilde \varphi_s \coloneqq \inf\{t>0 : \tilde I_t>s\}$. Further,
\begin{align}\label{sayso1}
	\tilde \varphi_t \ge \int_0^{\tilde \varphi_t} \1_{\{\sigma(Z_u) > 0\}} \dd u
	= \int_0^{t\wedge \tilde I_\infty} \sigma(Z_{\tilde \varphi_s})^{-\alpha} \dd s
	=  \int_0^{t\wedge \tilde I_\infty} \sigma(Y_s)^{-\alpha} \dd s,
	\quad t\ge0.
\end{align}
We shall now show that \eqref{sayso1} holds as an equality. Let $T$ be the first hitting time of $\mathcal O(\sigma,\alpha)$ by $Y$, and suppose at first that $\tilde I_\infty \le T$. We know that the relation $Y_t = Z_{\tilde \varphi_t}$ holds for all times $t< \tilde I_\infty$, and therefore it follows from the fact that $\tilde \varphi$ is strictly increasing that the first hitting time of $\mathcal O(\sigma,\alpha)$ by $Z$ is greater than or equal $\tilde \varphi_{\tilde I_\infty}$. In particular, since $\mathcal O(\sigma,\alpha)=N(\sigma)$, the indicator in the left-hand integral of \eqref{sayso1} is identically equal $1$ for all $t\ge0$, and thus the equation is an equality. Now suppose that $\tilde I_\infty \ge T$. From the relation $Y_t = Z_{\tilde \varphi_t}$ we see that $\tilde \varphi_T$ is the first hitting time of $\mathcal O(\sigma,\alpha)$ by $Z$, and as above we note that up to this time equality holds in \eqref{sayso1} because the indicator on the left is equal $1$. After time $T$ the right-hand-side of \eqref{sayso1} is equal to $+\infty$ (this is the main result of Proposition \ref{O lemma}, where we showed that $T=\varphi_\infty$) and so we have shown in any case that
\begin{align}\label{sayso2}
	\tilde \varphi_t =  \int_0^{t\wedge \tilde I_\infty} \sigma(Y_s)^{-\alpha} \dd s,
	\quad t\ge0.
\end{align}
But in fact now we can show something more. By definition either $\tilde I_\infty=+\infty$ or $\tilde \varphi_{\tilde I_\infty} = +\infty$. Since $T$ is (again by Proposition \ref{O lemma}) less than or equal any time $t$ for which the integral $I_t = \int_0^t \sigma(Y_s)^{-\alpha} \dd s$ has value infinity, it follows that in either case $\tilde I_\infty \ge T$. But in addition, since $\tilde \varphi$ is by definition strictly increasing, it follows from \eqref{sayso2} that $\int_0^t \sigma(Y_s)^{-\alpha} \dd s$ must be finite for all times less than $\tilde I_\infty$, and so $\tilde I_\infty\le T$. We can deduce that the two times are almost surely equal. It directly follows that 
\begin{align}\label{sayso3}
	\tilde \varphi_t =  \int_0^t \sigma(Y_s)^{-\alpha} \dd s,
	\quad t\ge0,
\end{align}
with both sides being equal $+\infty$ after time $\tilde I_\infty = T$. Plugging this into the relation $Y_{\tilde I_t} = Z_t$ for 
$t\ge0$ yields
that $Z$ has $\overline \P$-almost sure representation $Z_t = Y_{\varphi_t}$ for $ t\ge 0.$ If $z\in \mathcal{O}(\sigma,\alpha)$ the time-change is identically 0 and the time-change solution and trivial solution coincide, so the solution is unique. If $z\in \R\setminus N(\sigma)$ then the trivial solution does not exist, so the non-trivial time-change solution is unique. Since $\mathcal{O}(\sigma,\alpha) = N(\sigma)$ we have proven uniqueness for all $z\in\R$.
\end{proof}

We can specialize the theorem a bit more if $\sigma$ has a monotone zero. In that case the thin-sets disappear from the integral test. The situation might look artificial but occurs in many examples.\medskip

\begin{corollary}\label{sde thm1b}
Suppose $\sigma$ has an isolated monotone zero at $z\in\R$. Then there exists a non-trivial local weak solution to the SDE \eqref{SDEeq} started in $z$ if and only if 
 $\int_{z-\eps}^{z+\eps}  \sigma(y)^{-\alpha}|z-y|^{\alpha-1} \dd y<\infty$ for some $\eps>0$.
 \end{corollary}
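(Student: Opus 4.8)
The plan is to read the corollary off directly from Theorem~\ref{sde conditions}(i) combined with Theorem~\ref{ES stable}, applied to the path integrand $f=\sigma^{-\alpha}$. By Theorem~\ref{sde conditions}(i) a non-trivial local weak solution to \eqref{SDEeq} started at $z$ exists if and only if $z\notin\mathcal O(\sigma,\alpha)$, so everything reduces to converting the membership $z\in\mathcal O(\sigma,\alpha)$ into the advertised analytic integral test.

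First I would unwind Definition~\ref{O def}: $z\in\mathcal O(\sigma,\alpha)$ says $\P_z\big(\int_0^t\sigma(X_s)^{-\alpha}\dd s=\infty\text{ for all }t>0\big)=1$. The complementary event $\{\exists t>0:\int_0^t\sigma(X_s)^{-\alpha}\dd s<\infty\}$ is a germ event: it equals the nested increasing union $\bigcup_{n\ge1}\{I^f_{1/n}<\infty\}$ with $I^f_{1/n}$ being $\mathcal F_{1/n}$-measurable, hence it lies in $\mathcal F_{0+}$, and Blumenthal's zero-one law forces its $\P_z$-probability into $\{0,1\}$. Consequently $z\notin\mathcal O(\sigma,\alpha)$ is equivalent to $\P_z\big(\exists t>0:\int_0^t\sigma(X_s)^{-\alpha}\dd s<\infty\big)=1$, i.e.\ to statement (i) of Theorem~\ref{ES stable} for $f=\sigma^{-\alpha}$; this is exactly the content behind the analytic description \eqref{O set less 1} already obtained via Theorem~\ref{thin ES}.

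Next I would record the elementary monotonicity bookkeeping. Since $s\mapsto s^{-\alpha}$ is strictly decreasing on $(0,\infty)$ with $s^{-\alpha}\to+\infty$ as $s\downarrow0$, an isolated monotone zero of $\sigma$ at $z$ — so $\sigma(z)=0$, $\sigma$ strictly positive on $(z-\delta,z)\cup(z,z+\delta)$ for some $\delta>0$, decreasing to $0$ on the left and increasing from $0$ on the right — is, after composition with $s\mapsto s^{-\alpha}$ and using the standing convention $\sigma(z)^{-\alpha}=+\infty$, precisely an isolated monotone pole of $f=\sigma^{-\alpha}$ at $z$ in the sense of Theorem~\ref{ES stable}: $f(z)=+\infty$, $f$ finite on the punctured neighbourhood, $f$ monotone increasing on $(z-\delta,z)$ and monotone decreasing on $(z,z+\delta)$. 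Theorem~\ref{ES stable} then applies and yields that statement (i) there holds if and only if $\int_{z-\eps}^{z+\eps}\sigma(y)^{-\alpha}\abs{z-y}^{\alpha-1}\dd y<\infty$ for some $\eps>0$. Chaining the three equivalences — non-trivial local weak solution $\Leftrightarrow$ $z\notin\mathcal O(\sigma,\alpha)$ $\Leftrightarrow$ Theorem~\ref{ES stable}(i) $\Leftrightarrow$ the integral test — completes the proof.

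There is no real obstacle; the corollary is a concatenation of results already in place. The two points to be slightly careful about are the verification that $\{\exists t>0:\int_0^t\sigma(X_s)^{-\alpha}\dd s<\infty\}$ is a germ event, so that Blumenthal's zero-one law upgrades ``positive probability'' to ``probability one'' and matches the ``$=1$'' form of Theorem~\ref{ES stable}, and the direction check in passing from a monotone zero of $\sigma$ to a monotone pole of $\sigma^{-\alpha}$ (including the convention $\sigma(z)^{-\alpha}=+\infty$, which is exactly the value of $f$ at $z$ permitted in Theorem~\ref{ES stable}).
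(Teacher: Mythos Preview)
Your proposal is correct and follows exactly the paper's approach: combine Theorem~\ref{sde conditions}(i) with Theorem~\ref{ES stable} applied to $f=\sigma^{-\alpha}$. You simply spell out in detail the two points the paper's one-line proof leaves implicit (the Blumenthal zero-one law step linking $z\notin\mathcal O(\sigma,\alpha)$ to statement~(i) of Theorem~\ref{ES stable}, and the check that a monotone zero of $\sigma$ yields a monotone pole of $\sigma^{-\alpha}$).
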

\begin{proof}
Follows from Theorem \ref{sde conditions} (i), where Theorem \ref{ES stable} gives the path integral over $f=\sigma^{-\alpha}$.
\end{proof}
Let us consider the previous corollary for the particular situation 
$$
	\dd Z_t = |Z_{t-}|^\beta \dd X_t, \qquad Z_0 = 0.
$$
According to the corollary there is a non-trivial solution if and only if
$$
	\int_{-\eps}^{\eps} \sigma^{-\alpha}(y) \abs{y}^{\alpha-1} \dd y 
	= \int_{-\eps}^{\eps} \abs{y}^{-\alpha\beta + \alpha-1} \dd y 
	< \infty
$$
for some $\eps>0$, which holds if and only if $-\alpha\beta + \alpha-1 > -1$, that is, $\beta < 1$. Since the trivial solution $Z\equiv 0$ is also a solution, our integral test combined with the known results for $\alpha\in (1,2]$ applied to this situation gives the failure of uniqueness for the simple polynomial stable SDE if and only if $\beta<(1/\alpha)\wedge 1$. This corresponds nicely to the counter examples of Bass, Burdzy and Chen \cite{BBC} for the pathwise uniqueness for stable SDEs.

\subsection{Properties of Solutions}

In this final section we will explore properties of solutions to the stable SDE 
\begin{align*}
	\dd Z_t = \sigma (Z_{t-}) \dd X_t, \qquad Z_0 = z,
\end{align*}
 using the integral tests we developed earlier in this paper. 
Let us assume $\mathcal{O}(\sigma,\alpha)\subseteq N(\sigma)$, which ensures by Theorem \ref{sde conditions} that there exists a global solution with solution process with the time-change representation $Z=Y_\varphi$. 
 Since $\sigma$ takes values in $[0,\infty)$, it follows that the path integral $I_t = \int_0^t \sigma(Y_s)^{-\alpha}\dd s$ is continuous in $t$, which, as we noted around \eqref{volk2}, ensures that $Z$ is a strong Markov process. Note that the solution can explode in finite time in which case, $Z$ is sent to the cemetery state $\Delta$.
\subsubsection*{Explosion}
Let $\zeta\coloneqq \inf\{t>0:Z_t = \Delta\}$ denote the lifetime of $Z$. Since the driving stable process $Y$ has infinite lifetime, the time-change representation $Z=Y_\varphi$ implies that
$$
	\zeta = \int_0^\infty \sigma(Y_s)^{-\alpha}\dd s \quad \text{ almost surely.}
$$
We say that $Z$ \emph{explodes} if $\zeta<\infty$. If $I_t$ is finite for all $t\ge0$ almost surely then the event $\{\zeta<\infty\}$ is in the tail-$\sigma$-algebra of the stable process $Y$, and so in this case explosion becomes a zero-one-law. In general, however, $Z$ can explode with probability in $(0,1)$. Note that the recurrence of the symmetric stable processes excludes the possibility of finite time explosion for all $\alpha\in (1,2)$. We can now fully characterise explosion through integral tests via our earlier results. Theorem \ref{integral test} gives a necessary and sufficient condition for explosion of $Z$ with positive probability, and Theorem \ref{ASIT0} does the same but for explosion with probability one.

Here is an example. If we assume that $\sigma$ is bounded away from zero on compact sets, a unique global weak solutions exist for all initial conditions and according to Theorem \ref{ASIT}. Explosion of $Z$ is a zero-one law, and $Z$ explodes almost surely if and only if there exists a transient set $B$ such that
$$
	\int_{\R\setminus B} \sigma(x)^{-\alpha} \abs{x-z}^{\alpha-1} \dd x < \infty.
$$
It would be nice to remove B from the integral test, but Example \ref{ex} shows that in general this is not possible. Under strong regularity assumption the questions has been addressed for all starting conditions simultaneously in Döring and Kyprianou \cite{DK} by appealing to duality theory for Markov processes.
%

\subsubsection*{Freezing}
We say that $Z$ is \emph{frozen} if there exists a time $t\in[0,\infty)$ such that $Z_s = Z_t$ for all $s\ge t$. It follows from the time-change representation $Z=Y_\varphi$ is frozen if and only if $\varphi_\infty < \infty$, which from the definition of $\varphi$ occurs if and only if there exists a $t\in[0,\infty)$ such that
$$
	\int_0^t \sigma(Y_s)^{-\alpha} \dd s = \infty.
$$
Thus it is clear that freezing and explosion preclude one another. If $\alpha\in(1,2)$ then $Y$ is point recurrent, and Zanzotto's zero-one law (or Theorem \ref{ES4}) yields that freezing is a zero-one law, and gives a sufficient and necessary condition for freezing to occur. If $\alpha\in(0,1)$ then $Y$ is transient, and Theorem \ref{ES3} gives a sufficient and necessary condition for $Z$ to be frozen with positive probability, while Theorem \ref{ES2} gives a sufficient and necessary condition for freezing to occur almost surely.

\addcontentsline{toc}{section}{Bibliography}
\bibliography{bibliography}

\end{document}